\newtheorem{theorem}[equation]{Theorem}
\newtheorem{lemma}[equation]{Lemma}
\newtheorem{proposition}[equation]{Proposition}
\newtheorem{conjecture}[equation]{Conjecture}
\theoremstyle{definition}
\newtheorem{definition}[equation]{Definition}
\newtheorem{tablesymb}[equation]{Table of Symbols}
\theoremstyle{remark}
\newtheorem{remark}[equation]{Remark}
\numberwithin{equation}{subsection}
\newcommand{\DD}{\mathbb{D}}
\newcommand{\FF}{\mathbb{F}}
\newcommand{\ZZ}{\mathbb{Z}}
\newcommand{\QQ}{\mathbb{Q}}
\newcommand{\RR}{\mathbb{R}}
\newcommand{\GG}{\mathbb{G}}
\newcommand{\CC}{\mathbb{C}}
\newcommand{\NN}{\mathbb{N}}
\newcommand{\Fq}{\mathbb{F}_{q}}
\newcommand{\bn}{\mathbf{n}}
\newcommand{\bx}{\mathbf{x}}
\newcommand{\bX}{\mathbf{X}}
\newcommand{\bu}{\mathbf{u}}
\newcommand{\bv}{\mathbf{v}}
\newcommand{\bC}{\mathbf{C}}
\newcommand{\bz}{\mathbf{z}}
\newcommand{\bw}{\mathbf{w}}
\newcommand{\cD}{\mathcal{D}}
\newcommand{\cO}{\mathcal{O}}
\newcommand{\cZ}{\mathcal{Z}}
\DeclareMathAlphabet{\matheur}{U}{eur}{m}{n}
\newcommand{\fs}{\mathfrak{s}}
\newcommand{\fm}{\mathfrak{m}}
\newcommand{\fn}{\mathfrak{n}}
 \DeclareMathOperator{\Lie}{Lie}
\DeclareMathOperator{\Mat}{Mat} 
\DeclareMathOperator{\End}{End}
 \DeclareMathOperator{\wt}{wt}
\DeclareMathOperator{\Li}{Li}
\DeclareMathOperator{\dep}{dep}
\DeclareMathOperator{\lcm}{lcm}
\newcommand{\ok}{\overline{k}}
\newcommand{\tr}{\mathrm{tr}}
\newcommand{\Lis}{\Li^{\star}}
\newcommand{\DDConv}{\DD^{\text{\fontsize{6pt}{0pt}\selectfont \rm{Conv}}}}
\newcommand{\DDDef}{\DD^{\text{\fontsize{6pt}{0pt}\selectfont \rm{Def}}}}
\newcommand{\ocLConv}{\overline{\mathcal{L}}^{\text{\fontsize{6pt}{0pt}\selectfont \rm{Conv}}}}
\newcommand{\ocLDef}{\overline{\mathcal{L}}^{\text{\fontsize{6pt}{0pt}\selectfont \rm{Def}}}}
\newcommand{\power}[2]{{#1 [\![ #2 ]\!]}}
\newcommand{\laurent}[2]{{#1 (\!( #2 )\!)}}
\newcommand{\hcmspl}[1]{\mathfrak{h}_{\mathrm{CMSPL}}^{#1}}
\newcommand{\bst}{\star }
\newcommand{\cLis}{\mathcal{L}i^{\star}}
\newcommand{\cLisleq}[1]{\mathcal{L}i^{\star}_{\leq #1}}
\definecolor{ForestGreen}{rgb}{0.0, 0.5, 0.0}
\newcommand{\C}{\ensuremath \mathbb{C}}
\newcommand{\F}{\ensuremath \mathbb{F}}
\newcommand{\fa}{\mathfrak{a}}
\def\XXint#1#2#3{{\setbox0=\hbox{$#1{#2#3}{\int}$ }
\vcenter{\hbox{$#2#3$ }}\kern-.6\wd0}}
\title[Algebra structure of multiple zeta values in positive characteristic]{Algebra structure of multiple zeta values in positive characteristic}
\author{Chieh-Yu Chang}
\address{Department of Mathematics, National Tsing Hua University, Hsinchu City 30042, Taiwan
  R.O.C.}
\email{cychang@math.nthu.edu.tw}
\author{Yen-Tsung Chen}
\address{Department of Mathematics, National Tsing Hua University, Hsinchu City 30042, Taiwan
  R.O.C.}
\email{s107021901@m107.nthu.edu.tw}
\author{Yoshinori Mishiba}
\address{Department of Mathematical Sciences, University of the Ryukyus, 1 Senbaru, Nishihara-cho, Okinawa 903-0213,  Japan}
\email{mishiba@sci.u-ryukyu.ac.jp}
\subjclass[2010]{Primary 11R58, 11J93}
\date{\today}
\begin{document}

\begin{abstract}
This paper is a culmination of \cite{CM19b} on the study of multiple zeta values (MZV's) over function fields in positive characteristic.  For any finite place $v$ of the rational function field $k$ over a finite field, we prove that the $v$-adic MZV's satisfy the same $\bar{k}$-algebraic relations that their corresponding $\infty$-adic MZV's satisfy. Equivalently, we show that the $v$-adic MZV's form an algebra with multiplication law given by the $q$-shuffle product which comes from the $\infty$-adic MZV's, and there is a well-defined $\bar{k}$-algebra
homomorphism from the $\infty$-adic MZV's to the $v$-adic MZV's.\end{abstract}

\keywords{Multiple zeta values, $v$-adic multiple zeta values, Carlitz multiple star polylogarithms, logarithms of $t$-modules, $t$-motives}

\date{\today}
\maketitle

\section{Introduction}
\subsection{Classical conjecture}
Let $\NN$ be the set of positive integers. For a positive integer $r$, an $r$-tuple $\fs=(s_{1},\ldots,s_{r})\in \NN^{r}$ is called an index, and called {\it{admissible}} if $s_{1}>1$. We put $\wt(\fs):=\sum_{i=1}^{r}s_{i}$ and $\dep(\fs):=r$. Classical real-valued multiple zeta values (abbreviated as MZV's) are generalizations of special values of the Riemann zeta function at positive integers at least $2$. The MZV at an admissible index $\fs=(s_{1},\ldots,s_{r})$ is defined by the following series \[\zeta(\fs):=\sum_{n_{1}>n_{2}>\cdots>n_{r}\geq 1}\frac{ 1  }{n_{1}^{s_{1}}\cdots n_{r}^{s_{r}}  } \in \RR^{\times}.\] The weight and depth of the presentation $\zeta(\fs)$ are defined by $\wt(\fs)$ and $\dep(\fs)$ respectively.

MZV's have deep properties and have appeared in recent decades in connection with various topics including Grothendieck-Teichm\"uller groups, Drinfeld associators and KZ equations and periods of mixed Tate motives etc (see~\cite{An04, Br12, BGF19, DG05, Dr90, F11, Gon02, R02, Te02, Zh16}). One of the core problems on the topic of MZV's is to study their algebraic relations, and how to generate $\QQ$-linear relations among them has been well-developed. For instance,  the machinery of regularized double shuffle relations~\cite{IKZ06} produces rich $\QQ$-linear relations among MZV's of the same weight.

Let $p$ be a prime number.  In the parallel but extremely different world, namely the $p$-adic field, Furusho~\cite{F04} defined $p$-adic MZV's. The starting point is that for an admissible index $\fs=(s_{1},\ldots,s_{r})\in \NN^{r}$, the MZV $\zeta(\fs)$ is the limit of the one-variable multiple polylogarithm 
\[\Li_{\fs}(z) :=\sum_{n_{1}>n_{2}>\cdots>n_{r}\geq 1}\frac{ z^{n_{1}}  }{n_{1}^{s_{1}}\cdots n_{r}^{s_{r}}  }   \]
for $|z|<1, z\rightarrow 1$.  Furusho considered the one-variable $p$-adic multiple polylogarithm $\Li_\fs(z)_p$, which is the same power series as $\Li_{\fs}(z)$, but treated $p$-adically. He then made an analytic continuation of $\Li_{\fs}(z)_{p}$ by Coleman's p-adic iterated integration theory and then defined the p-adic MZV $\zeta_{p}(\fs)$  to be certain limit value at $1$ of analytically continued function of $\Li_{\fs}(z)_{p}$. Related details are referred to Furusho's paper~\cite{F04}.
The weight and depth of the presentation of the $p$-adic MZV $\zeta_{p}(\fs)$ are defined to be $\wt(\fs)$ and $\dep(\fs)$
respectively. 

Note that in the case of depth one, Furusho's p-adic zeta value $\zeta_{p}(s)$ equals the Kubota-Leopoldt  $p$-adic zeta value at $s$ up to a scalar multiplication by
$(1-p^{-s})^{-1}$. In particular, we have $\zeta_{p}(2n)=0$ for $n\in \NN$.  As Kubota-Leopoldt $p$-adic zeta function {\it $p$-adically interpolates} the
special values of Riemann zeta function at negative inetgers, one can ask the natural question: what kind of spark can these two seemingly similar values,  real-valued MZV's and $p$-adic MZV's, but living in completely different worlds have? The following fundamental conjecture gives an explicit connection between these two kinds of MZV's.

\begin{conjecture}\label{Conj: Conj1-Intro}
For any prime number $p$, the $p$-adic MZV's satisfy the same $\QQ$-algebraic relations that their corresponding real-valued MZV's satisfy. That is, if \[f(\zeta(\fs_{1}),\ldots,\zeta(\fs_{m}))=0 \] for $f\in\QQ[X_{1},\ldots,X_{m}]$, then we have \[f(\zeta_{p}(\fs_{1}),\ldots,\zeta_{p}(\fs_{m}))=0.\]
\end{conjecture}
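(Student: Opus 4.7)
Conjecture~\ref{Conj: Conj1-Intro} is essentially Grothendieck's period conjecture in disguise, so no unconditional proof is known; what follows is a conditional scheme. The idea is to factor both $\zeta(\fs)$ and $\zeta_{p}(\fs)$ through a single motivic object and then transport algebraic relations across realizations.

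First I would invoke, following Deligne--Goncharov and Brown, the $\QQ$-algebra $\mathcal{H}^{\fm}$ of motivic periods of the pro-unipotent motivic fundamental groupoid of $\PP^{1}_{\QQ}\setminus\{0,1,\infty\}$ with tangential base points at $0$ and $1$. This algebra contains motivic MZV's $\zeta^{\fm}(\fs)$ for each admissible $\fs$, and carries two realization homomorphisms: the Betti--de Rham period map $\mathrm{per}_{\infty}\colon \mathcal{H}^{\fm}\to \RR$ with $\mathrm{per}_{\infty}(\zeta^{\fm}(\fs))=\zeta(\fs)$, and a crystalline (Frobenius-equivariant de Rham) realization $\mathrm{per}_{p}\colon \mathcal{H}^{\fm}\to \QQ_{p}$, constructed via Coleman integration or, equivalently, Besser's rigid fundamental group. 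A preliminary step is to verify $\mathrm{per}_{p}(\zeta^{\fm}(\fs))=\zeta_{p}(\fs)$ by matching Furusho's construction of $p$-adic MZV's, as boundary values of analytically continued Coleman iterated integrals, with the $p$-adic period pairing on $\mathcal{H}^{\fm}$.

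Given $f\in \QQ[X_{1},\ldots,X_{m}]$ with $f(\zeta(\fs_{1}),\ldots,\zeta(\fs_{m}))=0$, the crucial step is to lift this to a \emph{motivic} relation $f(\zeta^{\fm}(\fs_{1}),\ldots,\zeta^{\fm}(\fs_{m}))=0$ in $\mathcal{H}^{\fm}$. Once that is done, applying $\mathrm{per}_{p}$ to both sides yields the desired identity $f(\zeta_{p}(\fs_{1}),\ldots,\zeta_{p}(\fs_{m}))=0$, and similarly for any $\QQ$-prime-to-$p$ variant of $\mathrm{per}_{p}$ one cares to use.

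The hard part is precisely the lifting step: it is equivalent to the injectivity of $\mathrm{per}_{\infty}$ restricted to $\mathcal{H}^{\fm}$, i.e.\ Grothendieck's period conjecture for mixed Tate motives over $\ZZ$. No current technique comes close. A realistic partial strategy is to replace ``all $\QQ$-algebraic relations'' by a specific family which is known to admit motivic lifts --- for instance the regularized double shuffle relations, Hoffman's duality, Ohno--Zagier, or Kawashima's relations --- and to verify case by case that each family descends to $\mathcal{H}^{\fm}$ and hence, via $\mathrm{per}_{p}$, to $\QQ_{p}$. This would recover Conjecture~\ref{Conj: Conj1-Intro} on the subalgebra of relations currently understood, but cannot settle it in full without a genuinely new idea linking the archimedean and $p$-adic analytic sides --- which is precisely the kind of bridge that becomes available, as the paper will show, in the function field setting.
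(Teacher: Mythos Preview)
The statement under discussion is a \emph{conjecture}, not a theorem: the paper does not prove it, nor does it claim to. It is stated in the introduction as motivation and background for the function field analogue (Theorem~\ref{T: IntrodT1}), which is what the paper actually establishes by entirely different methods (logarithms of $t$-modules, Yu's sub-$t$-module theorem). So there is no ``paper's own proof'' to compare your proposal against.

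That said, your conditional outline is essentially the third heuristic the paper itself sketches in the paragraphs following Conjecture~\ref{Conj: Conj2-Intro}: factor through motivic MZV's, use the period map $\mathfrak{Z}^{\mathrm{mot}}\twoheadrightarrow\mathfrak{Z}$ and the $p$-adic period map $\mathfrak{Z}^{\mathrm{mot}}/(\zeta^{\mathrm{mot}}(2))\twoheadrightarrow\mathfrak{Z}_{p}$, and observe that the Grothendieck period conjecture (injectivity of the archimedean period map) is exactly what is needed to transport relations. You have correctly identified both the scheme and its obstruction. The paper also records two other conditional routes you do not mention: the Ihara--Kaneko--Zagier conjecture combined with Furusho--Jafari's theorem that $p$-adic MZV's satisfy regularized double shuffle, and Furusho's conjectural identification of $\mathcal{O}(\underline{GRT}_{1})$ with $Z/(\pi^{2})$. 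None of these is a proof, and your proposal does not pretend otherwise; it is an accurate summary of the state of affairs in characteristic zero.
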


Let $\mathfrak{Z}$ (resp.~$\mathfrak{Z}_{p}$) be the $\QQ$-vector space spanned by $1$ and all real-valued MZV's (resp.~by $1$ and all $p$-adic MZV's). It is well-known that $\mathfrak{Z}$ forms a $\QQ$-algebra with two multiplication laws given by shuffle product and stuffle product \cite{R02, IKZ06, BGF19}. By~\cite{F04, BF06}, one also knows that $\mathfrak{Z}_{p}$ forms a $\QQ$-algebra with two multiplication laws given by shuffle product and stuffle product such as the case of real-valued MZV's. Therefore, the conjecture above is equivalent to the following one.

\begin{conjecture}\label{Conj: Conj2-Intro} For any prime number $p$, the following map
\[ \phi_{p}:=\left( \zeta(\fs)\mapsto \zeta_{p}(\fs) \right): \mathfrak{Z}\twoheadrightarrow  \mathfrak{Z}_{p}\]
is a well-defined $\QQ$-algebra homomorphism. 

\end{conjecture}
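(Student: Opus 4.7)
Since the statement is equivalent to Conjecture~\ref{Conj: Conj1-Intro}---that every $\QQ$-polynomial relation satisfied by real MZV's holds for the corresponding $p$-adic MZV's---the natural approach is motivic. I would work in the Tannakian category of mixed Tate motives over $\Spec(\ZZ[1/p])$ and construct motivic multiple zeta values $\zeta^{\mathrm{mot}}(\fs)$ in the associated Hopf algebra of motivic periods, endowed with shuffle and stuffle products that lift those of $\mathfrak{Z}$ and $\mathfrak{Z}_{p}$. Two realization functors would yield a Betti--de~Rham period map $\zeta^{\mathrm{mot}}(\fs)\mapsto \zeta(\fs)\in\RR$ and a $p$-adic (crystalline--de~Rham) period map $\zeta^{\mathrm{mot}}(\fs)\mapsto \zeta_{p}(\fs)\in\QQ_{p}$, both $\QQ$-algebra homomorphisms. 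Any $\QQ$-algebraic relation satisfied by the $\zeta^{\mathrm{mot}}(\fs)$ is then automatically inherited by both realizations, so that $\phi_{p}$ factors through $\mathfrak{Z}^{\mathrm{mot}}$ and is a well-defined $\QQ$-algebra homomorphism as soon as the Betti realization on the motivic MZV subalgebra is injective.

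The execution splits into two halves. First, the construction of $\zeta^{\mathrm{mot}}(\fs)$ and the compatibility of shuffle and stuffle products with the Hopf-algebra structure on periods is available from Goncharov, Deligne--Goncharov, and Brown; this upgrades the relations-checking problem to a statement at the motivic level. Second, I would identify the crystalline--de~Rham realization of $\zeta^{\mathrm{mot}}(\fs)$ with Furusho's analytically continued $p$-adic MZV via Coleman integration; this comparison is essentially the content of \cite{BF06}, modulo the Frobenius correction $(1-p^{-s})^{-1}$ in depth one. Granting these ingredients, the conjecture reduces entirely to the injectivity of the Betti period on the motivic MZV subalgebra.

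The main obstacle is exactly this injectivity: it is an instance of Grothendieck's period conjecture for mixed Tate motives, which is open. Consequently any attempt along these lines proves the conjecture only conditionally. Unconditionally, one can at most verify that the \emph{known} generating families of relations---regularized double shuffle, associator relations---are satisfied by the $p$-adic MZV's as well, following \cite{F04,IKZ06,BF06}, and then conjecture these exhaust everything. It is precisely this transcendence-theoretic gap that motivates the present paper to shift to the function-field setting: in characteristic $p$ the Tannakian theory of $t$-motives (Anderson, Papanikolas) furnishes an \emph{unconditional} substitute for period injectivity, which is what will allow the authors to prove the exact analogue of Conjecture~\ref{Conj: Conj2-Intro} for $\infty$-adic and $v$-adic MZV's over $k$.
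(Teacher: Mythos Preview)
The statement you are asked to prove is labeled a \emph{conjecture} in the paper, and the paper does not prove it; it is presented as open motivation for the function-field analogue (Theorems~\ref{T: IntrodT1} and~\ref{T: IntrodT2}). So there is no ``paper's own proof'' to compare against. What the paper does offer, immediately after stating the conjecture, is a discussion of conditional approaches: the Ihara--Kaneko--Zagier conjecture combined with Furusho--Jafari, Furusho's conjectural identification $\mathcal{O}(\underline{GRT}_1)\simeq Z/(\pi^2)$, and the motivic period picture with the Grothendieck period conjecture as the missing input. Your proposal is essentially a fleshed-out version of this last item, and you correctly identify the unremovable obstruction---injectivity of the Betti period map on $\mathfrak{Z}^{\mathrm{mot}}$---as an instance of the period conjecture.

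In short: your assessment is accurate and matches the paper's own framing. There is no gap in your reasoning because you do not claim an unconditional proof; you explain why none is currently available and why the paper pivots to characteristic~$p$, where Yu's sub-$t$-module theorem plays the role that the period conjecture would play classically. The only minor addition worth making is that the paper's actual proof of the function-field analogue does \emph{not} proceed via a Tannakian/motivic-period argument at all, but through explicit logarithms of $t$-modules, a trick on division points (Theorem~\ref{Theorem1}), and the identity $\ocLConv_{\infty}=\ocLDef_{\infty}$ (Theorem~\ref{theorem:linear-comb}); so while your last paragraph correctly anticipates that characteristic~$p$ transcendence theory is the key, the mechanism the authors use is more hands-on than a direct Tannakian transfer.
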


There are several ways to illustrate the conjectures above.
\begin{enumerate}
\item Ihara, Kaneko and Zagier~\cite{IKZ06} gave a conjecture asserting that the regularized double shuffle relations generate all $\QQ$-algebraic relations among the real-valued MZV's. Furusho-Jafari~\cite{FJ07} showed that the $p$-adic MZV's satisfy the regularized double shuffle relations. It follows that combining Ihara-Kaneko-Zagier conjecture and Furusho-Jafari's result would imply Conjecture~\ref{Conj: Conj1-Intro}.

\item 
Clues of the formulation of the conjecture above also come from \cite{F06, F07}. For an integer $n\geq 2$, we let $\mathfrak{Z}_{n}$ (resp.~$\mathfrak{Z}_{n,p}$) be the $\QQ$-vector space spanned by real-valued MZV's of weight $n$ (resp.~$p$-adic MZV's of weight $n$). Considering the graded algebra $Z:=\mathbb{Q}\oplus\bigoplus_{n\geq 2}\mathfrak{Z}_{n}$ (resp. $Z_p:=\mathbb{Q}\oplus\bigoplus_{n\geq 2}\mathfrak{Z_{n,p}}$), Furusho \cite[Conj.~A]{F06} conjectured that $\mathcal{O}({\underline{GRT}}_1)$ is isomorphic to $Z/(\pi^2)$ and in \cite[Sec.~3.1]{F07}  he explained that
there is a surjection from $\mathcal{O}({\underline{GRT}}_1)$ to $Z_p$. Here ${\underline{GRT}}_1$ is the unipotent part of the graded Grothendieck-Teichm\"uller group ${\underline{GRT}}$, which is a pro-algebraic group over $\mathbb{Q}$. For more details, see~\cite{F06, F07}. On the other hand, Goncharov's direct sum conjecture~\cite{Gon97} for MZV's asserts that $\mathfrak{Z}=Z$, and on the $p$-adic side we have a natural surjective $\QQ$-algebra homomorphism
\[ Z_{p}\twoheadrightarrow \mathfrak{Z}_{p} .\]
So, conjecturally the composite map
\[ \mathfrak{Z}=Z \twoheadrightarrow Z/(\pi^{2}) \simeq \mathcal{O}({\underline{GRT}}_1) \twoheadrightarrow Z_{p} \twoheadrightarrow \mathfrak{Z}_{p} \]
gives rise to a surjective $\QQ$-algebra homomorphism from $\mathfrak{Z}$ to $\mathfrak{Z}_{p}$. 
\end{enumerate}

Moreover, there is a motivic illustration for a conjectural surjective homomorphism $\mathfrak{Z}\twoheadrightarrow\mathfrak{Z}_{p}$, which the authors learned from F.~Brown's talk on \lq\lq motivic periods and applications \rq\rq  in Hausdorff Research Institute for Mathematics in 2018. First, Deligne also defined $p$-adic MZV's and Furusho showed in~\cite{F07} that Deligne's $p$-adic MZV's generate the same space $\mathfrak{Z}_{p}$. Let $\mathfrak{Z}^{\mathrm{mot}}$ be the $\QQ$-algebra of motivic MZV's, and one knows that there is a $\QQ$-algebra homomorphism (see \cite{Gon02} and \cite{Br12}) addressed as the period map
\[ \mathfrak{Z}^{\mathrm{mot}} \twoheadrightarrow \mathfrak{Z}, \]
and the Grothendick periods conjecture for MZV's predicts that this is an isomorphism. On the other hand,  from the $p$-adic period map one has a $\mathbb{Q}$-algebra homomorphism (cf.~\cite[(3.11)]{F07})
\[ \mathfrak{Z}^{\mathrm{mot}}/(\zeta^{\mathrm{mot}}(2)) \twoheadrightarrow \mathfrak{Z}_{p} ,\]
and so conjecturally there is a $\mathbb{Q}$-algebra homomorphism 
\[\mathfrak{Z}\twoheadrightarrow \mathfrak{Z}_{p} .\]

The aim of this paper is to prove the precise analogue of Conjecture~\ref{Conj: Conj1-Intro} in the setting of function fields in positive characteristic. Note that our methods of proof are through logarithms of $t$-modules, which are entirely different from the above points of view in the characteristic zero case.

\subsection{The main result} Let $q$ be a power of a prime number $p$, and let $\FF_{q}$ be a finite field of $q$ elements.  Let $A:=\FF_{q}[\theta]$ be the polynomial ring with quotient field $k:=\FF_{q}(\theta)$. We let $k_{\infty}$ be the completion of $k$ at the infinite place $\infty$, and $\CC_{\infty}$ be the $\infty$-adic completion of a fixed algebraic closure of $k_{\infty}$. We let $\bar{k}$ be the algebraic closure of $k$ inside $\CC_{\infty}$. 
 
 The $\infty$-adic multiple zeta values are defined by Thakur~\cite{T04}: for any index $\fs=(s_{1},\ldots,s_{r})\in \NN^{r}$, we define
\[
\zeta_{A}(\fs):=\sum \frac{1}{a_{1}^{s_1}\cdots a_{r}^{s_r}}\in k_{\infty},
\] where the sum is over all monic polynomials $a_{1},\ldots,a_{r}$ in $A$ with the restriction $\deg_{\theta}a_{1}> \deg_{\theta}a_{2}>\cdots>\deg_{\theta}a_{r}$. For $r=1$, the values above were introduced by Carlitz~\cite{Ca35} and called Carlitz zeta values. We call $\wt(\fs):=\sum_{i=1}^{r}s_{i}$ the weight and $r:=\dep(\fs)$ the depth of the presentation $\zeta_{A}(\fs)$. In~\cite{T10}, Thakur showed that for any two indices $\fs \in \NN^{r} $ and $\fs'\in \NN^{r'}$, one has 
\begin{equation}\label{E:sum-shuffle}
 \zeta_{A}(\fs)\cdot \zeta_{A}(\fs')=\sum_{j}f_{j} \zeta_{A}(\fs_{j})
\end{equation}
for some finitely many $f_{j}\in \FF_{p}$ and $\fs_{j}\in \NN^{\dep(\fs_{j})}$ depending on $q$ with $\wt(\fs_{j})=\wt(\fs)+\wt(\fs')$ and $\dep(\fs_{j})\leq \dep(\fs)+\dep(\fs')$, where $\FF_{p}$ is the prime field of $k$. We simply call (\ref{E:sum-shuffle}) the $q$-shuffle relations (or $q$-shuffle product), which Thakur called sum-shuffle relations. Note that in our positive characteristic setting, the $q$-shuffle product is neither the classical shuffle product nor stuffle product (see H.-J. Chen's explicit formula~\eqref{E: H-J formula}). Because of the $q$-shuffle product, the $\infty$-adic MZV's form an $\FF_{p}$-algebra.

Given a monic irreducible polynomial $v$ of $A$, we let $k_{v}$ be the completion of $k$ at $v$ and let $\CC_{v}$ be the $v$-adic completion of a fixed algebraic closure of $k_{v}$. Throughout this article, we always fix an embedding $\bar{k} \hookrightarrow \CC_{v}$ over $k$ once a finite place $v$ is given. Based on the formula of $\infty$-adic MZV's in terms of Carlitz multiple polylogarithms (abbreviated as CMPL's) established in~\cite{C14}, the first and third authors of the present paper introduced the Carlitz multiple star polylogarithms (abbreviated as CMSPL's) given in \eqref{CMSPL} and derived the formula of $\infty$-adic MZV's as $k$-linear combinations of CMSPL's at integral points in \eqref{E: MZV-formula}. In the depth one case, CMSPL's are reduced to Carlitz polylogarithms and such formula was established by Anderson-Thakur~\cite{AT90}.

Inspired by Furusho's strategy for defining $p$-adic MZV's~\cite{F04}, for any index $\fs\in \NN^{r}$ the first and third authors treated CMSPL's for $v$-adic convergence in~\cite{CM19a} and used action of certain $t$-modules for which $v$-adic CMSPL's can be extended to be defined at integral points. Then they used the same formula of $\infty$-adic MZV's~\eqref{E: MZV-formula} to define the $v$-adic MZV  $\zeta_{A}(\fs)_{v}$ in~\eqref{E: v-adic MZV-formula} for any index $\fs$.    As same as the case of $\infty$-adic MZV's, the weight and the depth of the presentation $\zeta_{A}(\fs)_{v}$ are defined to be $\wt(\fs)$ and $\dep(\fs)$ respectively. Note that Thakur~\cite{T04} also defined $v$-adic MZV's but his definition is different from ours.

In~\cite{Go79}, Goss defined a $v$-adic zeta function that interpolates Carlitz zeta values at non-positive integers and obtained $v$-adic zeta values at positive integers, which are simply called {\it{ Goss' $v$-adic zeta values}}, which are equal to Thakur's $v$-adic MZV's of depth one. In the depth one case, our $v$-adic zeta value $\zeta_{A}(s)_{v}$ is identical to Goss' $v$-adic zeta value~\cite{Go79} at $s$ multiplied by $(1-v^{-s})^{-1}$ (see \cite[Thm.~3.8.3. (II)]{AT90}), and so $\zeta_{A}(n)_{v}=0$ for all positive integers $n$ divisible by $q-1$ by the work of Goss.  This phenomenon is parallel to the $p$-adic case mentioned above.

Let $\overline{\cZ} \subset \CC_{\infty}$ (resp.\ $\overline{\mathcal{Z}}_{v} \subset \CC_{v}$) be the $\ok$-vector space spanned by $1$ and all $\infty$-adic MZV's (resp.~$1$ and all $v$-adic MZV's). It is shown in~\cite[Cor.~6.4.3]{CM19b} that the map $\overline{\mathcal{Z}}\twoheadrightarrow \overline{\mathcal{Z}}_{v}$ given by $\zeta_{A}(\fs)\mapsto \zeta_{A}(\fs)_{v}$ is a well-defined $\ok$-linear map. Our main theorem stated below is a function field analogue of Conjecture~\ref{Conj: Conj1-Intro} but it is in stronger form as it is over algebraic coefficients. 
\begin{theorem}\label{T: IntrodT1}
For any finite place $v$ of $k$, the $v$-adic MZV's satisfy the same $\bar{k}$-algebraic relations that their corresponding $\infty$-adic MZV's satisfy. That is, if \[g(\zeta_{A}(\fs_{1}),\ldots,\zeta_{A}(\fs_{m}))=0\] for $g\in \bar{k}[X_{1},\ldots,X_{m}]$, then we have \[g(\zeta_{A}(\fs_{1})_{v},\ldots,\zeta_{A}(\fs_{m})_{v})=0.\]
\end{theorem}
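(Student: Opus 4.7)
The plan is to establish the theorem by upgrading the $\bar{k}$-linear surjection $\phi_v: \overline{\mathcal{Z}} \twoheadrightarrow \overline{\mathcal{Z}}_v$, $\zeta_A(\fs) \mapsto \zeta_A(\fs)_v$, of~\cite[Cor.~6.4.3]{CM19b} to a $\bar{k}$-algebra homomorphism. Since Thakur's $q$-shuffle identities~\eqref{E:sum-shuffle} already express every product $\zeta_A(\fs)\zeta_A(\fs')$ as an $\FF_p$-linear combination of MZV's, the map $\phi_v$ will be a $\bar{k}$-algebra homomorphism as soon as, for every pair of indices $\fs, \fs'$, the $v$-adic counterpart
\[
\zeta_A(\fs)_v \cdot \zeta_A(\fs')_v = \sum_j f_j \, \zeta_A(\fs_j)_v
\]
holds with the \emph{same} coefficients $f_j \in \FF_p$ and indices $\fs_j$ as in~\eqref{E:sum-shuffle}. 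Granting this, applying $\phi_v$ to any polynomial relation $g(\zeta_A(\fs_1), \ldots, \zeta_A(\fs_m)) = 0$, after expanding $g$ via iterated $q$-shuffles into a $\bar{k}$-linear combination of MZV's, immediately yields the theorem.

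The first concrete step is to translate the target identity from MZV's to CMSPL's. Using the formula of~\cite{C14} expressing $\zeta_A(\fs)$ as a $\bar{k}$-linear combination of values $\cLis_{\fs'}(\bu)$, together with the analogous $v$-adic formula defining $\zeta_A(\fs)_v$ from $v$-adic CMSPL values $\cLis_{\fs'}(\bu)_v$, both sides of the desired $v$-adic $q$-shuffle relation become explicit $\bar{k}$-linear combinations of $v$-adic CMSPL values at algebraic points $\bu$. On the region where the defining power series of the relevant CMSPL's converge simultaneously at $\infty$ and at $v$, the needed identity reduces to a formal power-series identity reflecting the $q$-shuffle structure at the polylogarithm level, and therefore holds automatically. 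The real task is to propagate it to algebraic points lying outside the $v$-adic disc of convergence.

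This is where the $t$-module machinery of~\cite{CM19a, CM19b} enters: for such algebraic points, $\cLis_{\fs}(\bu)_v$ is defined as a designated coordinate of $\Log_{G_\fs}(\bz_{\fs,\bu})$ for a suitable $t$-module $G_\fs$ and an algebraic point $\bz_{\fs, \bu}$. The plan is to construct, for each pair $(\fs, \fs')$, a single block-triangular $t$-module $G$ built out of the $G_{\fs}$, $G_{\fs'}$, and the $G_{\fs_j}$ appearing in the $q$-shuffle decomposition, together with an algebraic point $\bz$ whose logarithm $\Log_G(\bz)$ simultaneously carries as distinguished coordinates both the product $\zeta_A(\fs)_v \cdot \zeta_A(\fs')_v$ (through its CMSPL expansion) and the $q$-shuffle sum $\sum_j f_j \zeta_A(\fs_j)_v$. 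The relation between these two coordinates is forced at the level of $\Log_G$ by the combinatorics of the $\infty$-adic $q$-shuffle decomposition and a polynomial/algebraic identity among the input data; since the same $t$-module and algebraic point remain valid at $v$, the $v$-adic identity follows from the uniqueness of $\Log_G$ on its domain of definition.

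The main obstacle is the careful design of these $t$-modules and algebraic points. Concretely, one must assemble block-triangular iterated extensions of Carlitz-type $t$-modules that encode both the product and the $q$-shuffle sum of CMSPL values, exhibit explicit algebraic inputs whose $v$-adic images lie in the domain of $\Log_G$, and verify that the coordinate extracting the product agrees, modulo the kernel of the exponential map, with the coordinate extracting the $q$-shuffle sum. This is the combinatorially and $t$-motivically intricate part of the argument, as it requires tracking the interaction between the $q$-shuffle combinatorics of Chen's formula and the block structure of $G$. Once this $t$-motivic matching is in place, combining it with the $\bar{k}$-linearity of $\phi_v$ from~\cite{CM19b} and with Thakur's $q$-shuffle relations at the place $\infty$ promotes $\phi_v$ to a $\bar{k}$-algebra homomorphism and finishes the proof.
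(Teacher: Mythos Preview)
Your plan has a genuine gap at its central step. You propose to build a $t$-module $G$ and an algebraic point $\bz$ so that one coordinate of $\Log_G(\bz)$ is the \emph{product} $\zeta_A(\fs)_v\cdot\zeta_A(\fs')_v$ and another is the $q$-shuffle sum $\sum_j f_j\zeta_A(\fs_j)_v$. But coordinates of logarithms of $t$-modules are additive in $\bz$; there is no known construction that realizes a product of two logarithm values as a single logarithm coordinate, and you offer no mechanism for producing one. Your sentence ``on the region where the defining power series of the relevant CMSPL's converge simultaneously at $\infty$ and at $v$, the needed identity reduces to a formal power-series identity reflecting the $q$-shuffle structure at the polylogarithm level'' conflates two different product laws: Thakur's $q$-shuffle~\eqref{E:sum-shuffle} is a relation among MZV's, not a term-by-term series identity among CMSPL's; what does hold as a formal series identity is the \emph{stuffle} product for CMSPL's (Proposition~\ref{stuffle-product-formula}), and these are not the same thing. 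Finally, the integral points $\bu_\ell\in A^{\dep(\fs_\ell)}$ coming from the MZV--CMSPL formula~\eqref{E: MZV-formula} typically do not lie in $\DDConv_{\fs_\ell,v}$, so there is no convergent region on which your ``automatic'' identity could be checked and then propagated.

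The paper avoids all of this by never attempting to encode a product inside a $t$-module. Instead it works entirely at the level of CMSPL's and the stuffle product, and the decisive technical input is Theorem~\ref{theorem:linear-comb}: every $\Lis_{\fs}(\bu)$ with $\bu\in\DDDef_{\fs,\ok}$ can be rewritten as a $\ok$-linear combination of CMSPL values at points in $\DDConv_{\fs',\ok}$, so that $\ocLDef_{\infty}=\ocLConv_{\infty}$. This equality is proved by a division-point trick (Theorem~\ref{Theorem1}) and an explicit coordinate formula for $\log_{G_{\fs,\bu}}$ (Theorem~\ref{T: Chen}). Separately, Yu's sub-$t$-module theorem is used to show that $\phi_v\colon\ocLDef_{\infty}\to\ocLDef_{v}$, $\Lis_{\fs}(\bu)\mapsto\Lis_{\fs}(\bu)_v$, is a well-defined $\ok$-linear map (Theorem~\ref{T: phi-v linear}). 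Since on $\ocLConv_{\infty}$ and $\ocLConv_{v}$ the stuffle relations hold by convergent-series arguments, $\phi_v$ is automatically multiplicative; the $q$-shuffle relations for $v$-adic MZV's then drop out by applying $\phi_v$ to~\eqref{E:sum-shuffle}. In short, the multiplicativity comes from convergent series and stuffle, not from an exotic $t$-module encoding of products, and the hard work is the identity $\ocLDef_{\infty}=\ocLConv_{\infty}$, which your plan does not address.
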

 
Note that $\overline{\mathcal{Z}}$ forms a $\ok$-algebra because of \eqref{E:sum-shuffle}. The theorem above is equivalent to the following.

\begin{theorem}\label{T: IntrodT2}
Let $v$ be a finite place of $k$. Then the following hold.
\begin{enumerate}
\item $v$-adic MZV's satisfy the $q$-shuffle relations in the sense that
\[   \zeta_{A}(\fs)_{v}\cdot \zeta_{A}(\fs')_{v}=\sum_{j}f_{j} \zeta_{A}(\fs_{j})_{v} \]
with notation given in~\eqref{E:sum-shuffle}.
\item $\overline{\cZ}_{v}$ forms a $\ok$-algebra and the following map $\overline{\mathcal{Z}}\twoheadrightarrow  \overline{\mathcal{Z}}_{v}$ given by  $\zeta_{A}(\fs)\mapsto \zeta_{A}(\fs)_{v}$ is a well-defined $\bar{k}$-algebra homomorphism. In particular, the kernel contains the principal ideal generated by $\zeta_{A}(q-1)$.
\end{enumerate}
\end{theorem}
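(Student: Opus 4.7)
The plan is to establish part~(1) first, from which part~(2) and the equivalent Theorem~\ref{T: IntrodT1} follow essentially formally. Once the $q$-shuffle relations are shown $v$-adically, products of $v$-adic MZV's lie in $\overline{\cZ}_{v}$, so the latter is a $\ok$-algebra. Combined with the $\ok$-linearity of $\phi_{v} \colon \zeta_{A}(\fs) \mapsto \zeta_{A}(\fs)_{v}$ from~\cite[Cor.~6.4.3]{CM19b}, this forces $\phi_{v}$ to be a $\ok$-algebra homomorphism; the kernel assertion follows from $\zeta_{A}(q-1)_{v}=0$ (by Goss) together with the fact that the kernel of an algebra homomorphism is an ideal. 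Theorem~\ref{T: IntrodT1} is then immediate, since any $\ok$-polynomial relation among $\infty$-adic MZV's can be $q$-shuffle-expanded into a $\ok$-linear relation (which transfers $v$-adically by~\cite{CM19b}) and re-contracted on the $v$-adic side using (1).

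For the main identity $\zeta_{A}(\fs)_{v} \cdot \zeta_{A}(\fs')_{v} = \sum_{j} f_{j}\, \zeta_{A}(\fs_{j})_{v}$ with the same $f_{j} \in \FF_{p}$ and $\fs_{j}$ as in~\eqref{E:sum-shuffle}, the strategy is to descend to the CMSPL level. By~\eqref{E: MZV-formula}, each MZV is an explicit $k$-linear combination of $\cLis$-values at integral points, and the $v$-adic MZV is defined by the same formal combination using $v$-adic CMSPL's extended to integral points via the action of associated Anderson $t$-modules. The aim is to prove, at the $\infty$-adic level, a CMSPL-level $q$-shuffle identity
\[
\cLis_{\fa}(\bu) \cdot \cLis_{\fa'}(\bu') \;=\; \sum_{l} \alpha_{l} \, \cLis_{\fa_{l}}(\bu_{l}), \qquad \alpha_{l} \in \ok,
\]
whose MZV specialization via~\eqref{E: MZV-formula} matches H.-J.~Chen's explicit formula~\eqref{E: H-J formula} for~\eqref{E:sum-shuffle}. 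The desired product identity among $v$-adic MZV's then reduces to a $\ok$-linear relation among $v$-adic CMSPL values.

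The final step is to transfer this CMSPL-level $\ok$-linear identity from $\infty$-adic to $v$-adic via the $t$-module logarithm machinery of CM19b. Each CMSPL value $\cLis_{\fa}(\bu)$ is realized as a distinguished coordinate of the $\infty$-adic logarithm of an algebraic point on an Anderson $t$-module $\bG_{\fa}$, and the $v$-adic CMSPL value is the corresponding coordinate of the $v$-adic logarithm of the same algebraic point. A $\ok$-linear dependence among $\infty$-adic CMSPL values forces the algebraic point on the product $\prod_{i} \bG_{\fa_{i}}$ to lie in a sub-$t$-module, defined over $\ok$, on which the pertinent linear functional vanishes; because this sub-$t$-module is algebraic, the $v$-adic logarithm of the same point satisfies the matching vanishing, yielding the required $v$-adic relation. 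The main obstacle is establishing the CMSPL-level $q$-shuffle identity with the correct $\ok$-coefficients, since products of two CMSPL power series do not obviously decompose into single CMSPL's at integral points, and one must recover exactly the $\FF_{p}$-coefficients $f_{j}$ of~\eqref{E: H-J formula} upon MZV specialization; once this combinatorial identity is in hand, the $v$-adic statement is a formal consequence of the $t$-module transfer.
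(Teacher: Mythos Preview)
Your proposal has a genuine gap in the contraction step, and it is precisely the step the paper works hardest to justify.

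Your plan is: expand $\zeta_{A}(\fs)\zeta_{A}(\fs') - \sum_{j} f_{j}\zeta_{A}(\fs_{j}) = 0$ on the $\infty$-adic side into a $\ok$-linear relation among single CMSPL values (using \eqref{E: MZV-formula} and the stuffle product), transfer that linear relation $v$-adically via Yu's sub-$t$-module theorem, and then re-contract on the $v$-adic side. The transfer step is fine and is essentially Theorem~\ref{T: phi-v linear}. The problem is the last step. After transfer you obtain a $v$-adic identity of the form
\[
\sum_{\ell,\ell'} b_{\ell} b'_{\ell'}\,\bigl(\text{stuffle expansion of } \Lis_{\fs_{\ell}}(\bu_{\ell})\cdot\Lis_{\fs'_{\ell'}}(\bu'_{\ell'})\bigr)_{v} \;=\; \sum_{j} f_{j}\,\zeta_{A}(\fs_{j})_{v},
\]
but to identify the left-hand side with $\zeta_{A}(\fs)_{v}\cdot\zeta_{A}(\fs')_{v}$ you must know that
\[
\Lis_{\fs_{\ell}}(\bu_{\ell})_{v}\cdot\Lis_{\fs'_{\ell'}}(\bu'_{\ell'})_{v} \;=\; \bigl(\text{stuffle expansion}\bigr)_{v}
\]
for the integral points $\bu_{\ell},\bu'_{\ell'}$. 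These points lie only in $\DDDef_{\fs,\ok}$, not in $\DDConv_{\fs,\ok}$: their first coordinates generically satisfy $|u|_{v}=1$, so the $v$-adic CMSPL there is defined via the $t$-module logarithm, \emph{not} by a convergent series, and Proposition~\ref{stuffle-product-formula} does not apply. Thus the $v$-adic stuffle identity at these points is exactly what remains to be proved; your argument is circular at this point.

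The paper breaks this circularity by inserting a new ingredient you do not have: Theorem~\ref{theorem:linear-comb}, the equality $\ocLConv_{\infty}=\ocLDef_{\infty}$. This is proved by a division-points trick on the $t$-module $G_{\fs,\bu}$ (Theorem~\ref{Theorem1}) together with an explicit formula for the $\wt(\fs)$-th coordinate of $\log_{G_{\fs,\bu}}$ (Theorem~\ref{T: Chen}), which rewrites any $\Lis_{\fs}(\bu)$ with $\bu\in\DDDef_{\fs,\ok}$ as a $\ok$-linear combination of CMSPL values at points whose first coordinate has $v$-adic absolute value strictly less than $1$. Once everything lives in $\ocLConv$, stuffle holds on \emph{both} sides by Proposition~\ref{stuffle-product-formula}, so $\phi_{v}$ is automatically multiplicative on all of $\ocLDef_{\infty}$, and the $q$-shuffle for $v$-adic MZV's drops out by restricting to $\overline{\cZ}$. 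Your outline recovers the Yu-transfer half of the argument but omits this reduction to the convergent domain, which is the actual content of Sections~3--4.
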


The theorem above gives an affirmative answer of part of the questions in~\cite[Rem.~6.4.4]{CM19b}, which arose from numerical evidence using Sage. In Section~\ref{Subsec: Example} we give an example for computing the product of the $v$-adic single zeta value $\zeta_{v}(1)$ with itself  for a very special $q$. As can be seen from that example, a direct calculation proof of Theorem~\ref{T: IntrodT2} is impracticable because the definition of $\zeta_{A}(\fs)_{v}$ is through the logarithm of a concrete $t$-module, whose dimension is huge when $\wt(\fs)$ and $\dep(\fs)$ are large. In this paper, we aim to prove Theorem~\ref{T: IntrodT2} in a more robust way via the logarithmic points of view. Since we have shown in~\cite{CM19b} that the  map $\overline{\mathcal{Z}}\twoheadrightarrow  \overline{\mathcal{Z}}_{v}$ is well-defined and $\ok$-linear,  the statement (1) of Theorem~\ref{T: IntrodT2} is equivalent to the statement (2). It is natural to ask about the kernel of the $\overline{k}$-algebra homomorphism $\overline{\mathcal{Z}}\twoheadrightarrow\overline{\mathcal{Z}}_{v}$ given in Theorem~\ref{T: IntrodT2}. We conjecture that the kernel in question is the principal ideal generated by the single zeta value $\zeta_A(q-1)$ and discuss some applications in Sec.~\ref{Subsec: Questions}.

\subsection{Ideas of the proof} To sketch the key ideas of our proof, we first set up the following notation. Fix any finite place $v$ of $k$. For any index $\fs=(s_{1},\ldots,s_{r})\in \NN^{r}$, let $\Lis_{\fs}(z_{1},\ldots,z_{r})$ be the Carlitz multiple star polylogarithm defined in~\eqref{CMSPL}. We define 
\begin{equation}\label{E: Ds,infty}
 \DD_{\fs,\infty}: =\{ (z_{1}, \dots, z_{r}) \in \CC_{\infty}^{r} | \ |z_{1}|_{\infty} < q^{\frac{s_{1} q}{q-1}} \ \mathrm{and} \ |z_{i}|_{\infty} \leq  q^{\frac{s_{i} q}{q-1}} \ (2 \leq i \leq r) \}\subset \CC_{\infty}^{r}
\end{equation}
and note that by~\cite[Rem.~4.1.3.]{CM19b} $\Lis_{\fs}$ converges $\infty$-adically on $ \DD_{\fs,\infty}$. Concerning the $v$-adic convergence, $\Lis_{\fs}$ converges on 
\begin{equation}\label{E: Dr,v,o}
 \DDConv_{\fs, v} := \{ (z_{1}, \dots, z_{r}) \in \CC_{v}^{r} | \ |z_{1}|_{v} < 1 \ \mathrm{and} \ |z_{i}|_{v} \leq 1 \ (2 \leq i \leq r) \}\subset \CC_{v}^{r},
\end{equation}
but by~\cite{CM19a} it can be extended to be defined on the closed polydisc
\begin{equation}\label{E: Dr,v}
 \DDDef_{\fs, v} := \{ (z_{1}, \dots, z_{r}) \in \CC_{v}^{r} | \  |z_{i}|_{v} \leq 1 \ (1 \leq i \leq r) \}\subset \CC_{v}^{r}.
 \end{equation}
 
We then define the following common sets of algebraic points
 \begin{equation}\label{E: DConv s ok} 
  \DDConv_{\fs, \ok} := \DD_{\fs, \infty} \cap \ok^{\dep \fs} \cap \DDConv_{\fs, v} \subset \ok^{\dep \fs},
 \end{equation} 
 at which $\Lis_{\fs}$ converges both $\infty$-adically and $v$-adically,
 and
 \begin{equation}\label{E: DDef s ok}
  \DDDef_{\fs, \ok} := \DD_{\fs, \infty} \cap \ok^{\dep \fs} \cap \DDDef_{\fs, v} \subset \ok^{\dep \fs},
  \end{equation}
  at which $\Lis_{\fs}$ is defined both $\infty$-adically and $v$-adically.

\begin{definition}\label{Def: Intro} We define the following $\ok$-vector spaces.
\begin{enumerate}
\item[$\bullet$] $\ocLConv_{\infty} :=$ the $\ok$-vector space spanned by $1$ and all $\Lis_{\fs}(\bu)$ for $\fs\in \cup_{r>0}\NN^{r}$ and $\bu \in \DDConv_{\fs, \ok}$.
\item[$\bullet$] $\ocLConv_{v} :=$ the $\ok$-vector space spanned by $1$ and all $\Lis_{\fs}(\bu)_{v}$ for $\fs\in \cup_{r>0}\NN^{r}$ and $\bu \in \DDConv_{\fs, \ok}$.
\item[$\bullet$]  $\ocLDef_{\infty} :=$ the $\ok$-vector space spanned by $1$ and all $\Lis_{\fs}(\bu)$ for $\fs\in \cup_{r>0}\NN^{r}$ and $\bu \in \DDDef_{\fs, \ok}$.
\item[$\bullet$] $\ocLDef_{v} :=$ the $\ok$-vector space spanned by $1$ and all $\Lis_{\fs}(\bu)_{v}$ for $\fs\in \cup_{r>0}\NN^{r}$ and $\bu \in \DDDef_{\fs, \ok}$.
\end{enumerate}
\end{definition}

As $\DDConv_{\fs, \ok} \subset \DDDef_{\fs, \ok}$ we have the natural inclusions:
\[ \ocLConv_{\infty} \subset \ocLDef_{\infty} \hbox{ and } \ocLConv_{v} \subset \ocLDef_{v}. \]
We illustrate our strategy via the following commutative diagram

 \begin{equation}\label{E: DiagramIntrod}
\xymatrix{
\overline{\mathcal{Z}} \ar@{->>}[d]_{\textnormal{\cite{CM19b}}}  \ar@{^{(}->}[rr]^{\textnormal{\cite{C14}}} & & \ocLDef_{\infty} \ar@{->>}[d]^{\textnormal{Thm.~\ref{T: phi-v linear}}}_{\phi_{v}}  \ar@{=}[rr]^{\textnormal{\tiny {Thm.~\ref{theorem:linear-comb}} }} & & \ocLConv_{\infty} \ar@{->>}[d]  \\
\overline{\mathcal{Z}}_{v}  \ar@{^{(}->}[rr]^{\textnormal{Def.}} & & \ocLDef_{v}  & &\ocLConv_{v} \ar@{_{(}->}[ll]_{=}
}
\end{equation}
with the following descriptions:
\begin{enumerate}
\item The inclusion $\overline{\mathcal{Z}} \hookrightarrow \ocLDef_{\infty}$ follows from~\cite[Thm.~5.2.5]{CM19b} (see~\cite[Thm.~5.5.2]{C14} also).
\item The map $\overline{\mathcal{Z}}\twoheadrightarrow \overline{\mathcal{Z}}_{v}$ given by $\zeta_{A}(\fs)\mapsto \zeta_{A}(\fs)_{v}$ is a well-defined $\ok$-linear map by~\cite[Cor.~6.4.3]{CM19b}.
\item We prove in Theorem~\ref{theorem:linear-comb} that the inclusion $\ocLConv_{\infty} \hookrightarrow \ocLDef_{\infty}$ is in fact an equality. 
\item We prove in Theorem~\ref{T: phi-v linear}  that the map $\phi_{v}: \ocLDef_{\infty} \twoheadrightarrow \ocLDef_{v}$ given by $\Lis_{\fs}(\bu)\mapsto \Lis_{\fs}(\bu)_{v}$ is a well-defined $\ok$-linear map.
\item By definition, the restriction of $\phi_{v}$ to $\ocLConv_{\infty}$ is a well-defined $\ok$-linear map onto $\ocLConv_{v}$.
\end{enumerate}

With the above properties established, we mention that since the map $\phi_{v}$ is surjective and $\ocLDef_{\infty} = \ocLConv_{\infty}$, it implies the equality $\ocLDef_{v} = \ocLConv_{v}$. Furthermore, because the CMSPL's converge $\infty$-adically and $v$-adically on $\DDConv_{\fs, \ok}$, the values in $\ocLConv_{\infty}$ and $\ocLConv_{v}$ satisfy the stuffle relations respectively (see~\eqref{infty-adic stuffle} and \eqref{v-adic stuffle}) and hence the map $\phi_{v}$ is a $\ok$-algebra homomorphism. As the restriction of $\phi_{v}$ to $\overline{\mathcal{Z}}$ is given by $\zeta_{A}(\fs)\mapsto \zeta_{A}(\fs)_{v}$ (see~\eqref{E: MZV-formula} and \eqref{E: v-adic MZV-formula}), we derive the desried $\ok$-algebra homomorphism $\overline{\mathcal{Z}}\twoheadrightarrow \overline{\mathcal{Z}}_{v}$.

\subsection{Organization of this paper}
In Sec.~\ref{Sec: preliminaries}, we first review the theory of Anderson on his $t$-modules. We then review CMSPL's and describe stuffle relations. We further recall from~\cite{CM19a, CM19b} how we relate CMSPL's to coordinates of logarithms of certain $t$-modules at specific points as is used to define $v$-adic MZV's.

Sections 3 and 4 are the most technical parts, which are devoted to prove Theorem~\ref{theorem:linear-comb}. Given any  $\Lis_{\fs}(\bu)\in \ocLDef_{\infty}$, ie., $\bu\in \DDDef_{\fs,\ok}$, we mention that $\Lis_{\fs}(\bu)$ is realized as the $\wt (\fs)$-th coordinate of the logarithm of an explicitly constructed $t$-module $G$ defined over $\ok$ at an algebraic point $\bv\in G(\ok)$. To show that $\Lis_{\fs}(\bu)\in \ocLConv_{\infty}$, the key of our strategy is to find a suitable algebraic point $\bv'\in G(\ok)$, at which the logarithm $\log_{G}$ converges both $\infty$-adically and $v$-adically and we use techniques of division points to do the trick. From the functional equation of $\log_{G}$, the $\wt(\fs)$-th coordinate of $\log_{G}(\bv')$ is related to the value $\Lis_{\fs}(\bu)$. 

Moreover, we have to control the $v$-adic size of the point $\bv'$ as is needed when defining the $v$-adic MZV $\zeta_{A}(\fs)_{v}$. However, Papanikolas' computation~\cite{Pp} concerning the leading coefficient matrices of the $t^{m}$-action of the $s$-th tensor power of the Carlitz module $\bC^{\otimes s}$ enables us to ensure that the point $\bv'$ satisfies the desired properties. The crucial result is stated as Theorem~\ref{Theorem1}. 

Section~\ref{Sec: Key identity} is devoted to establish a kind of algebraic functional equations for certain coordinate of the logarithm of certain explicit $t$-module at any algebraic point whenever it is defined. The primary result is given as Theorem~\ref{T: Chen}, which is mainly used to prove the identity $\ocLDef_{\infty}=\ocLConv_{\infty}$. In the final section, we use Yu's sub-$t$-module theorem~\cite{Yu97} to prove in Theorem~\ref{T: phi-v linear}  that the map $\phi_{v}$ is a well-defined $\ok$-linear map, and then give a proof for Theorem~\ref{T: IntrodT2}.

\section{Preliminaries}\label{Sec: preliminaries}
\subsection{Notations}

\begin{tablesymb}\label{tableofsymbols} We use the following symbols throughout this paper. \\
${}$\\
\begin{xtabular}{l r l l}
$\NN$ & $=$ & the set of positive integers.\\
$q$ & $=$ &  a power of a prime number $p$.\\
$\F_q$& $=$ & a finite field of $q$ elements.\\
$A$ &$=$ & $\F_q[\theta]$, the polynomial ring in the variable $\theta$ over $\FF_{q}$.\\
$k$ &$=$ & $\F_q(\theta)$, the quotient field of $A$.\\
$\lvert\cdot\rvert_{\infty}$ &$=$ & the normalized absolute value on $k$ for which $|\theta|_{\infty} = q$.\\
$k_\infty$ &$=$ &$\laurent{\F_q}{1/\theta}$,  the completion of $k$ at the infinite place.\\
$\C_\infty$ &$=$ & $\widehat{\overline{k_{\infty}}}$, the $\infty$-adic completion of an algebraic closure of $k_\infty$.\\
$v$ & $=$ & a monic irreducible polynomial in $A$. \\
$\lvert\cdot\rvert_{v}$ &$=$ & the normalized absolute value on $k$ for which $|v|_{v} = q_{v}^{-1}$, where $q_{v} := q^{\deg_{\theta} v}$.\\
$k_{v}$ &$=$ & the completion of $k$ at $v$.\\
$\C_{v}$ &$=$ & $\widehat{\overline{k_{v}}}$, the $v$-adic completion of an algebraic closure of $k_{v}$.\\
$\ok$ & $=$ & an algebraic closure of $k$ with fixed embeddings into $\CC_{\infty}$ and $\CC_{v}$ over $k$.\\
$\tilde{\Lambda}$ &$=$ &$(\lambda_{r},\ldots,\lambda_{1})$ for any $r$-tuple $\Lambda=(\lambda_{1},\ldots,\lambda_{r})$ of symbols.\\
$\| M \|_{w}$ & $=$ & $\max_{i, j} \{ | M_{ij} |_{w} \}$ for $M = (M_{ij}) \in \Mat_{\ell \times m}(\CC_{w})$ where $w=\infty$ or $w=v$. \\
$\wt(\fs)$&= & $\sum_{i=1}^{r}s_{i}$ for an index $\fs=(s_{1},\ldots,s_{r})\in \NN^{r}$.\\
$\dep(\fs)$&= & $r$ for an index $\fs=(s_{1},\ldots,s_{r})\in \NN^{r}$.\\
\end{xtabular}
\end{tablesymb}

\subsection{$t$-modules associated to CMSPL's} 

\subsubsection{Review of Anderson's theory on $t$-modules}
For any $\FF_{q}$-algebra $R$, any matrix $M = (M_{ij}) \in \Mat_{\ell \times m}(R)$ and any non-negative integer $n$, we define the $n$-th fold Frobenius twist by $M^{(n)} := (M_{ij}^{q^{n}})$. We then define the non-commutative ring $\Mat_{d}(R)[\tau]$, whose elements are of the form $\sum_{i\geq 0} a_{i} \tau^{i}$ with $a_{i} \in \Mat_{d}(R)$ and $a_{i} = 0$ for $i \gg 0$, and whose  multiplication law is given by
\[ (\sum_{i\geq 0}a_{i}\tau^{i})(\sum_{j\geq 0}b_{j}\tau^{j}) =\sum_{i}\sum_{j} a_{i} b_{j}^{(i)} \tau^{i+j}. \]
We put $R[\tau] := \Mat_{1}(R)[\tau]$ then we have natural identifications
\[
\Mat_{d}(R[\tau]) \simeq \Mat_{d}(R)[\tau] \simeq \End_{\Fq}(\GG_{a/R}^{d}),
\]
where $\End_{\Fq}(\GG_{a/R}^{d})$ is the ring of $\Fq$-linear endomorphisms over $R$ of the $d$-dimensional additive group scheme $\GG_{a/R}^{d}$ and $\tau$ corresponds to the Frobenius operator $(x_{1}, \dots, x_{d})^{\tr} \mapsto (x_{1}^{q}, \dots, x_{d}^{q})^{\tr}$.

Let $t$ be a new variable.  Given an $A$-subalgebra $R\subset \ok$ and a positive integer $d$, a $t$-module of dimension $d$ defined over $R$ is an $\FF_{q}$-linear ring homomorphism
\[ [-]:\FF_{q}[t] \rightarrow \Mat_{d}(R[\tau]) \]
so that $\partial [t]-\theta \cdot I_{d}$ is a nilpotent matrix. Here, for $a \in \Fq[t]$ we define $\partial [a]:=a_{0}$ whenever $[a]=\sum_{i=0}^{m} a_{i} \tau^{i}$ for $a_{i}\in \Mat_{d}(R)$. We denote by $G=({\GG_{a}^{d}}_{/R},[-])$, whose underlying space is the group scheme $\GG_{a}^{d}$ over $R$ and whose $\FF_{q}[t]$-module structure is through the $\FF_{q}$-linear ring homomorphism $[-]$, ie, for any $R$-algebra $R'$, the $\FF_{q}[t]$-module structure on ${\GG_{a}^{d}}_{/R}(R') = (R')^{d}$ is given by
\[ a\cdot \bx:=[a](\bx) \]
for $a\in \FF_{q}[t]$ and $\bx\in (R')^{d}$.

Fix a $d$-dimensional $t$-module $G$ defined over $R$ as above, and let $K$ be the fraction field of $R$. Anderson~\cite{A86} showed that there is a $d$-variable $\FF_{q}$-linear formal power series
\[\exp_{G}\in \power{K}{z_{1},\ldots,z_{d}}^{d} \] 
satisfying that $\exp_{G}(\bz)\equiv \bz \pmod {\deg q}$ for $\bz=(z_{1},\ldots,z_{d})^{\tr}$, and as formal power series identity we have

\begin{equation}\label{E: FuncEq of exp}
\exp_{G} \circ \partial [a] = [a] \circ \exp_{G}
\end{equation}for all $a\in \FF_{q}[t]$. The power series $\exp_{G}$ is called the {\it{exponential map}} of $G$, and it is shown by Anderson that as $\infty$-adic convergence, it is an entire function from $\Lie G(\CC_{\infty})=\CC_{\infty}^{d}$ to $G(\CC_{\infty})=\CC_{\infty}^{d}$. The formal inverse of $\exp_{G}$ is denoted by $\log_{G}$ and is called the logarithm map of $G$. So as formal power series one has the following propertities:
\begin{itemize}
\item[$\bullet$] $\log_{G}(\bz) \equiv \bz \pmod {\deg q}$.
\item[$\bullet$] $\log_{G}\circ [a]=\partial [a]\circ \log_{G}$ for $a\in \FF_{q}[t]$.
\end{itemize}

From the point of view of transcendence theory~\cite{Yu91, Yu97}, the values of logarithms of $t$-modules at algebraic points provide rich resources of interesting transcendental numbers. In this paper, we deepen this logarithmic perspective and such logarithmic interpretations for our special values studied here provide the key approaches to prove Theorem~\ref{T: IntrodT2}.

\subsubsection{CMSPL's and stuffle relations}
Let $L_{0} := 1$ and $L_{i} := \prod_{j = 1}^{i} (\theta - \theta^{q^{j}})$ for $i \geq 1$. For any index $\fs = (s_{1}, \dots, s_{r})\in \NN^{r}$, we define the $\fs$-th Carlitz multiple polylogarithm (CMPL) as follows (see~\cite{C14}):
\begin{equation*}
\Li_\fs(z_1,\dots,z_r):=\underset{i_1>\cdots>i_r\geq 0}{\sum}\frac{z_1^{q^{i_1}}\dots z_r^{q^{i_r}}}{L_{i_1}^{s_1}\cdots L_{i_r}^{s_r}}\in \power{k}{ z_1,\cdots,z_r}.
\end{equation*}
We also define the $\fs$-th Carlitz multiple star polylogarithm (CMSPL) as follows (see~\cite{CM19a}):
\begin{equation} \label{CMSPL}
\Li_\fs^\star(z_1,\dots,z_r):=\underset{i_1\geq\cdots\geq i_r\geq 0}{\sum}\frac{z_1^{q^{i_1}}\cdots z_r^{q^{i_r}}}{L_{i_1}^{s_1}\cdots L_{i_r}^{s_r}}\in \power{k} {z_1,\cdots,z_r}.
\end{equation}
We denote by $\Li_\fs(z_1,\cdots,z_r)_v$ and $\Li_\fs^\star(z_1,\cdots,z_r)_v$ when we consider the $v$-adic convergence of those two infinite series.

In what follows, we describe the stuffle relations arising from CMSPL's. Let
\[
X := \{ (s, u) | s \in \NN, u \in \ok, |u|_{\infty} \leq q^{\frac{sq}{q-1}}, |u|_{v} \leq 1 \} \subset \NN \times \ok
\]
and
\[
X^{0} := \{ (s, u) | s \in \NN, u \in \ok, |u|_{\infty} < q^{\frac{sq}{q-1}}, |u|_{v} < 1 \} \subset X.
\]
Let
\[
\hcmspl{1} := \ok \langle z_{s, u} | (s, u) \in X \rangle
\]
be the non-commutative polynomial algebra over $\ok$ generated by the variables $\left\{z_{s, u}\right\}_{(s, u) \in X}$ and
\[
\hcmspl{0} := \ok \oplus \left( \bigoplus_{(s, u) \in X^{0}} z_{s, u} \hcmspl{1} \right) \subset \hcmspl{1}.
\] For any $\fs = (s_{1}, \dots, s_{r}) \in \NN^{r}$ and $\bu = (u_{1}, \dots, u_{r}) \in \ok^{r}$ with $(s_{i},u_{i})\in X$ for $i=1,\ldots,r$, we shall call $z_{s_{1},r_{1}}\cdots z_{s_{r},u_{r}}$ the monomial associated to the pair $(\fs,\bu)$, and vice versa.

We define the $\ok$-bilinear stuffle product $\bst$ on $\hcmspl{1}$ by
\[
1 \bst w = w \bst 1 = w
\]
and
\[
z_{s, u} w \bst z_{s', u'} w' = z_{s, u} (w \bst z_{s', u'} w') + z_{s', u'} (z_{s, u} w \bst w') - z_{s + s', u u'} (w \bst w')
\]
for each $(s, u), (s', u') \in X$ and $w, w' \in \hcmspl{1}$ (cf.~\cite[Sec.~2.1]{IKOO11}). Clearly, $\hcmspl{0}$ is closed under $\bst$. 

Now we define $\cLis(-)$ and $\cLis(-)_{v}$ to be the $\ok$-linear maps given by $\cLis(1) := 1$,  $\cLis(1)_{v} := 1$, and
\[
\cLis(-):=\left(z_{s_{1}, u_{1}} \cdots z_{s_{r}, u_{r}} \mapsto \Lis_{(s_{1}, \dots, s_{r})} (u_{1}, \dots, u_{r})
 \right) \colon \hcmspl{0} \twoheadrightarrow \ocLConv_{\infty} \subset \CC_{\infty},\]
and
\[
\cLis(-)_{v}:=\left( z_{s_{1}, u_{1}} \cdots z_{s_{r}, u_{r}} \mapsto \Lis_{(s_{1}, \dots, s_{r})} (u_{1}, \dots, u_{r})_{v} \right): \hcmspl{0} \twoheadrightarrow \ocLConv_{v} \subset \CC_{v}.
\]

The following describes the stuffle relations for the convergent values of CMSPL's, and it may be well-known for experts but to be self-contained we provide the detailed but short arguments here.
\begin{proposition} \label{stuffle-product-formula}
The $\ok$-linear maps $\cLis(-)$ and $\cLis(-)_{v}$ are multiplicative in the sense that
\[
\cLis(w \bst w') = \cLis(w)\cdot \cLis(w') \ \mathrm{and} \ \cLis(w \bst w')_{v} = \cLis(w)_{v}\cdot \cLis(w')_{v}
\]
for each $w, w' \in \hcmspl{0}$. In particular, $\ocLConv_{\infty}$ and $\ocLConv_{v}$ form $\ok$-algebras and their generators satisfy the same stuffle relations in the sense that for monomials $w,w'\in \hcmspl{0}$ with expression
\begin{equation}\label{E: stuffle multiplication} 
w\bst w'=\sum_{i} \alpha_{i} w_{i},\ \alpha_{i}\in \FF_{p} ,
\end{equation}
we have
\begin{equation}\label{infty-adic stuffle}
\cLis(w)\cdot \cLis(w')=\cLis(w \bst w') =\sum_{i}\alpha_{i}\cLis(w_{i}) \hbox{ in }\CC_{\infty}
\end{equation}
and
\begin{equation}\label{v-adic stuffle}
\cLis(w)_{v} \cdot\cLis(w')_{v}=\cLis(w \bst w')_{v} =\sum_{i}\alpha_{i}\cLis(w_{i})_{v} \hbox{ in }\CC_{v}.
\end{equation}

\end{proposition}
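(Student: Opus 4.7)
My plan is to prove the multiplicativity by first establishing a purely algebraic identity between finite-truncation versions of the CMSPL's and then passing to the limit in both topologies using the convergence guaranteed by the $X^{0}$-constraints. Given a monomial $w=z_{s_1,u_1}\cdots z_{s_r,u_r}\in \hcmspl{1}$ and $N\geq 0$, set
\[
  \cLisleq{N}(w) := \sum_{N\geq i_1\geq \cdots\geq i_r\geq 0}\frac{u_1^{q^{i_1}}\cdots u_r^{q^{i_r}}}{L_{i_1}^{s_1}\cdots L_{i_r}^{s_r}}\in \ok
\]
and extend $\ok$-linearly; these truncations converge to $\cLis(w)$ in $\CC_\infty$ and to $\cLis(w)_v$ in $\CC_v$ as $N\to\infty$ whenever the corresponding $\Lis^\star$-series does. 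The central step is to prove the finite identity
\[
  \cLisleq{N}(w)\cdot \cLisleq{N}(w') \;=\; \cLisleq{N}(w\bst w')
\]
for all $w,w'\in \hcmspl{1}$ and all $N\geq 0$ by induction on $\dep(w)+\dep(w')$.

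The base case $w=1$ is immediate from $1\bst w'=w'$. For $w=z_{s,u}w_0$ and $w'=z_{s',u'}w'_0$ I would expand the product and partition the joint range of the outermost indices $(i_1,j_1)$ via the inclusion--exclusion $\{i_1\geq j_1\}\cup \{j_1\geq i_1\}$ with overlap $\{i_1=j_1\}$. Fixing the outer index in each region and combining the recursive identity $\cLisleq{N}(z_{s,u}\mathcal{W})=\sum_{i\leq N}\tfrac{u^{q^i}}{L_i^s}\,\cLisleq{i}(\mathcal{W})$ with the induction hypothesis (applied to inner pairs of strictly smaller total depth) yields exactly the three terms
\[
  \cLisleq{N}\bigl(z_{s,u}(w_0\bst z_{s',u'}w'_0)\bigr)+\cLisleq{N}\bigl(z_{s',u'}(z_{s,u}w_0\bst w'_0)\bigr)-\cLisleq{N}\bigl(z_{s+s',uu'}(w_0\bst w'_0)\bigr)
\]
appearing on the right of the recursive definition of $\bst$; the overlap calculation at $i_1=j_1=i$ produces the merged factor $\tfrac{u^{q^i}}{L_i^s}\cdot \tfrac{u'^{q^i}}{L_i^{s'}}=\tfrac{(uu')^{q^i}}{L_i^{s+s'}}$, which accounts for both the new letter $z_{s+s',uu'}$ and the minus sign dictated by inclusion--exclusion.

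To finish, I would verify that $\hcmspl{0}$ is stable under $\bst$: for $(s,u),(s',u')\in X^{0}$ one has $|uu'|_\infty<q^{(s+s')q/(q-1)}$ and $|uu'|_v<1$, so $(s+s',uu')\in X^{0}$, and hence every monomial appearing in a stuffle product of two elements of $\hcmspl{0}$ still has leading letter in $X^{0}$. The strict inequalities furnished by $X^{0}$ guarantee both the $\infty$-adic and $v$-adic convergence of the associated $\Lis^\star$-series, so passing to the limit $N\to\infty$ in the finite identity yields $\cLis(w\bst w')=\cLis(w)\cdot \cLis(w')$ and the analogous $v$-adic identity; the statements \eqref{infty-adic stuffle}, \eqref{v-adic stuffle} and the $\ok$-algebra structures on $\ocLConv_\infty$ and $\ocLConv_v$ then follow at once by $\ok$-linearity. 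The main technical point I expect to require care is precisely the inclusion--exclusion bookkeeping: the $\geq$-convention of $\Lis^\star$ is what forces the overlap $\{i_1=j_1\}$ to be subtracted with the correct merged leading letter, and making this match the recursive definition of $\bst$ cleanly in the induction is the one place where a sign or counting error is easy to make.
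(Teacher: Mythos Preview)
Your proposal is correct and follows essentially the same approach as the paper: define truncated CMSPL maps $\cLisleq{N}$, prove the finite multiplicativity $\cLisleq{N}(w)\cdot\cLisleq{N}(w')=\cLisleq{N}(w\bst w')$ by induction on the total depth using the recursion $\cLisleq{N}(z_{s,u}\mathcal{W})=\sum_{i\leq N}\tfrac{u^{q^i}}{L_i^s}\cLisleq{i}(\mathcal{W})$ together with inclusion--exclusion on the outermost indices, and then pass to the limit in both $\CC_\infty$ and $\CC_v$. Your explicit verification that $\hcmspl{0}$ is closed under $\bst$ is a detail the paper simply asserts as ``clear,'' but otherwise the arguments coincide.
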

\begin{proof}
For each non-negative integer $n$, we define the (truncated) $\ok$-linear map $\cLisleq{n}$ given by $\cLisleq{n}(1) := 1$ and
\[
\cLisleq{n}:=\left( z_{s_{1},u_{1}} \cdots z_{s_{r}, u_{r}} \mapsto \sum_{n \geq i_{1} \geq \cdots \geq i_{r} \geq 0} \dfrac{u_{1}^{q^{i_{1}}} \cdots u_{r}^{q^{i_{r}}}}{L_{i_{1}}^{s_{1}} \cdots L_{i_{r}}^{s_{r}}} \right) \colon \hcmspl{0} \to \ok.
\]
Since ${\displaystyle \lim_{n \to \infty}} \cLisleq{n}(w) = \cLis(w)$ in $\CC_{\infty}$ and ${\displaystyle \lim_{n \to \infty}} \cLisleq{n}(w) = \cLis(w)_{v}$ in $\CC_{v}$, it suffices to show that
\[
\cLisleq{n}(w \bst w') = \cLisleq{n}(w)\cdot \cLisleq{n}(w')
\]
for all $w, w' \in \hcmspl{0}$ and $n \in \ZZ_{\geq 0}$.

Because of linearity, we may assume that $w$ and $w'$ are monomials. We prove the desired claim by induction on the sum of total degrees of $w$ and $w'$. If $w = 1$ or $w' = 1$, then the equality is clearly valid. Let $w\neq 1, w' \neq 1$ and suppose that the equality holds for all $n$ and for monomials whose total degree is less than $\deg(w) + \deg(w')$. If we write $w = z_{s, u} w_{0}$ and $w' = z_{s', u'} w'_{0}$, then we have
\begin{align*}
& \cLisleq{n}(z_{s, u} w_{0}) \cdot \cLisleq{n}(z_{s', u'} w'_{0}) \\
& = \sum_{n \geq i \geq 0} \dfrac{u^{q^{i}}}{L_{i}^{s}} \cLisleq{i}(w_{0}) \cdot \cLisleq{i}(z_{s', u'} w'_{0})
+ \sum_{n \geq i \geq 0} \dfrac{(u')^{q^{i}}}{L_{i}^{s'}} \cLisleq{i}(z_{s, u} w_{0}) \cdot \cLisleq{i}(w'_{0}) \\
& -\sum_{n \geq i \geq 0} \dfrac{(u u')^{q^{i}}}{L_{i}^{s + s'}} \cLisleq{i}(w_{0})\cdot \cLisleq{i}(w'_{0}) \\
& = \sum_{n \geq i \geq 0} \dfrac{u^{q^{i}}}{L_{i}^{s}} \cLisleq{i}(w_{0} \bst z_{s', u'} w'_{0})
+ \sum_{n \geq i \geq 0} \dfrac{(u')^{q^{i}}}{L_{i}^{s'}} \cLisleq{i}(z_{s, u} w \bst w'_{0}) \\
& - \sum_{n \geq i \geq 0} \dfrac{(u u')^{q^{i}}}{L_{i}^{s + s'}} \cLisleq{i}(w_{0} \bst w'_{0}) \\
& = \cLisleq{n}(z_{s, u} (w_{0} \bst z_{s', u'} w'_{0})) + \cLisleq{n}(z_{s', u'} (z_{s, u} w_{0} \bst w'_{0})) - \cLisleq{n}(z_{s + s', u u'} (w_{0} \bst w'_{0})) \\
& = \cLisleq{n}(z_{s, u} w_{0} \bst z_{s', u'} w'_{0}),
\end{align*}
where the second identity comes from the induction hypothesis and the last two identities follow from definitions. 

\end{proof}

\subsubsection{The construction of $G_{\fs,\bu}$}
	Throughout this section, we fix $\fs = (s_{1}, \dots, s_{r})\in \NN^{r}$ and $\bu = (u_{1}, \dots, u_{r}) \in \ok^{r}$.
For $1 \leq i \leq r$,
we set 
\begin{equation}\label{E: d i}
d_{i} := s_{i} + \cdots + s_{r}
\end{equation}
and 
\begin{equation}\label{E: d}
d := d_{1} + \cdots + d_{r}.
\end{equation}
Let $B$ be a $d \times d$-matrix of the form

\[
\left( \begin{array}{c|c|c}
B[11] & \cdots & B[1r] \\ \hline
\vdots & & \vdots \\ \hline
B[r1] & \cdots & B[rr]
\end{array} \right)
\]
where $B[\ell m]$ is a $d_{\ell} \times d_{m}$-matrix for each $\ell$ and $m$.
We call $B[\ell m]$ the $(\ell, m)$-th block sub-matrix of $B$.

For $1 \leq \ell \leq m \leq r$, we set

\[
N_{\ell} := \left(
\begin{array}{ccccc}
0 & 1 & 0 & \cdots & 0 \\
& 0 & 1 & \ddots & \vdots \\
& & \ddots & \ddots & 0 \\
& & & \ddots & 1 \\
& & & & 0
\end{array}
\right)
\in \Mat_{d_{\ell}}(\ok),
\]

\[
N := \left(
\begin{array}{cccc}
N_{1} & & & \\
& N_{2} & & \\
& & \ddots & \\
& & & N_{r}
\end{array}
\right)
\in \Mat_{d}(\ok),
\]

\[
E[\ell m] := \left(
\begin{array}{cccc}
0 & \cdots & \cdots & 0 \\
\vdots & \ddots & & \vdots \\
0 & & \ddots & \vdots \\
1 & 0 & \cdots & 0
\end{array}
\right)
\in \Mat_{d_{\ell} \times d_{m}}(\ok) \ \ \ (\mathrm{if} \ \ell = m),
\]

\[
E[\ell m] := \left(
\begin{array}{cccc}
0 & \cdots & \cdots & 0 \\
\vdots & \ddots & & \vdots \\
0 & & \ddots & \vdots \\
(-1)^{m-\ell} \prod_{e=\ell}^{m-1} u_{e} & 0 & \cdots & 0
\end{array}
\right)
\in \Mat_{d_{\ell} \times d_{m}}(\ok) \ \ \ (\mathrm{if} \ \ell < m),
\]

\[
E := \left(
\begin{array}{cccc}
E[11] & E[12] & \cdots & E[1r] \\
& E[22] & \ddots & \vdots \\
& & \ddots & E[r-1,r] \\
& & & E[rr]
\end{array}
\right)
\in \Mat_{d}(\ok).
\]

Also, we define the $t$-module $G_{\fs, \bu} := (\GG_{a}^{d}, [-])$ by
\begin{equation}\label{E:Explicit t-moduleCMPL}
  [t] = \theta I_{d} + N + E \tau
  \in \Mat_{d}(\ok[\tau]).
\end{equation}
Note that $G_{\fs,\bu}$ depends  only on $u_{1},\ldots,u_{r-1}$.
Finally, we define
\begin{equation}\label{E:v_s,u}
\bv_{\fs, \bu} :=
\begin{array}{rcll}
\ldelim( {15}{4pt}[] & 0 & \rdelim) {15}{4pt}[] & \rdelim\}{4}{10pt}[$d_{1}$] \\
& \vdots & & \\
& 0 & & \\
& (-1)^{r-1} u_{1} \cdots u_{r} & & \\
& 0 & & \rdelim\}{4}{10pt}[$d_{2}$] \\
& \vdots & & \\
& 0 & & \\
& (-1)^{r-2} u_{2} \cdots u_{r} & & \\
& \vdots & & \vdots \\
& 0 & & \rdelim\}{4}{10pt}[$d_{r}$] \\
& \vdots & & \\
& 0 & & \\
& u_{r} & & \\[10pt]
\end{array} \in G_{\fs,\bu}(\ok).
\end{equation}

\subsection{$v$-adic MZV's} To review the definition of $v$-adic MZV's, we need to recall how we extend the defining domain of the $v$-adic CMSPL $\Lis_{\fs}$ to $\DDDef_{\fs,v}$. Fix an $r$-tuple $\fs=(s_{1},\ldots,s_{r}) \in \NN^{r}$ and note that  $\Lis_{\fs}$ converges $v$-adically on $\DDConv_{\fs, v}$ given in \eqref{E: Dr,v,o} (see~\cite[Sec.~2.2]{CM19a}).
The following result gives logarithmic interpretation for CMSPL's at algebraic points.
\begin{theorem}[\textnormal{\cite[Thm.~3.3.3]{CM19a} and \cite[Thm.~4.2.3]{CM19b}}]\label{T: CMSPL as log} Fixing any $r$-tuples $\fs=(s_{1},\ldots,s_{r})\in \NN^{r}$ and $\bu=(u_{1},\ldots,u_{r})\in \ok^{r}$, let  $G_{\fs,\bu}$ and $\bv_{\fs,\bu}$ be defined as above.

\begin{enumerate}
\item If $\tilde{\bu} = (u_{r},\ldots,u_{1}) \in \ok^{r}\cap \DD_{\tilde{\fs},\infty}\subset \CC_{\infty}^{r}$ defined in \eqref{E: Ds,infty} and $\bx = (x_{i}) \in G_{\fs, \bu}(\CC_{\infty})$ with $|x_{d_{1} + \cdots d_{m-1} + j}|_{\infty} < q^{-(d_{m} - j) + \frac{d_{m} q}{q - 1}}$ for each $1 \leq m \leq r$ and $1 \leq j \leq d_{m}$, then $\log_{G_{\fs, \bu}}(\bx)$ converges $\infty$-adically. In particular, $\log_{G_{\fs,\bu}}\left( \bv_{\fs,\bu} \right)$ converges $\infty$-adically. Moreover, its $\wt(\fs)$-th coordinate is equal to \[ (-1)^{r-1} \cdot \Lis_{\tilde{\fs}}(\tilde{\bu}) = (-1)^{r-1}\cdot \Lis_{(s_{r},\ldots,s_{1})}(u_{r},\ldots,u_{1}).\]

\item If $\tilde{\bu}=(u_{r},\ldots,u_{1}) \in \ok^{r}\cap \DDConv_{\tilde{\fs}, v} \subset \CC_{v}^{r}$ defined in \eqref{E: Dr,v,o} and $\bx \in G_{\fs, \bu}(\CC_{v})$ with $\| \bx \|_{v} < 1$, then $\log_{G_{\fs, \bu}}(\bx)_{v}$ converges $v$-adically. In particular, $\log_{G_{\fs,\bu}}\left( \bv_{\fs,\bu} \right)_{v}$ converges $v$-adically. Moreover, its $\wt(\fs)$-th coordinate is equal to 
\[ (-1)^{r-1} \cdot \Lis_{\tilde{\fs}}(\tilde{\bu})_{v} = (-1)^{r-1}\cdot \Lis_{(s_{r},\ldots,s_{1})}(u_{r},\ldots,u_{1})_{v}.\]
\end{enumerate}
\end{theorem}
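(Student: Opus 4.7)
The plan is to exploit the explicit upper-triangular block structure of $[t] = \theta I_d + N + E\tau$ to compute $\exp_{G_{\fs,\bu}}$ and hence $\log_{G_{\fs,\bu}}$ coordinate by coordinate, and then identify the $\wt(\fs)$-th coordinate at $\bv_{\fs,\bu}$ with the desired CMSPL value.

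First I would expand $\exp_{G_{\fs,\bu}}(\bz) = \sum_{j\geq 0} Q_j \bz^{(j)}$ with $Q_0 = I_d$ and $Q_j \in \Mat_d(\ok)$, and substitute into the defining identity $\exp_{G_{\fs,\bu}}\circ \partial [t] = [t] \circ \exp_{G_{\fs,\bu}}$. Since $N$ has entries in $\FF_q$, comparing coefficients of $\bz^{(j)}$ yields the recursion
\[
Q_{j}(\theta^{q^{j}} I_d + N) = (\theta I_d + N) Q_{j} + E\, Q_{j-1}^{(1)} \qquad (j \geq 1),
\]
which, because $\theta^{q^{j}} \neq \theta$ for $j \geq 1$ and $N$ is nilpotent, determines $Q_j$ uniquely in terms of $Q_{j-1}$. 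Formally inverting $\exp_{G_{\fs,\bu}}$ produces $\log_{G_{\fs,\bu}}(\bz) = \sum_{i \geq 0} P_i \bz^{(i)}$ governed by an analogous recursion in which the denominators are exactly the factors $\theta^{q^{j}} - \theta$ (together with corrections coming from $N$); these are precisely what produces the Carlitz factorials $L_{i}$ upon iteration.

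Next I would trace the effect of the special vector $\bv_{\fs,\bu}$. By construction it has at most one nonzero entry in each of the $r$ blocks, namely the last entry of the $m$-th block is $(-1)^{r-m} u_m \cdots u_r$. Since $N$ only shifts within a block and $E$ only transfers the bottom entry of the $m$-th block to the top of an earlier $\ell$-th block (weighted by $(-1)^{m-\ell}\prod_{e=\ell}^{m-1} u_e$ when $\ell < m$), the computation of $P_i \bv_{\fs,\bu}$ decomposes into a finite sum indexed by sequences of block-transfers. The $\wt(\fs)$-th coordinate sits at the top of the first block, and I expect its value to be the sum over chains $r \to r-1 \to \cdots \to 1$ of compositions of block-transfers, with the inner Frobenius levels forced to satisfy $i_1 \geq i_2 \geq \cdots \geq i_r \geq 0$ because each block must process its own nilpotent ladder before passing upward. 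Combining the signs from $E$ and from $\bv_{\fs,\bu}$ yields the overall factor $(-1)^{r-1}$, while the monomials $u_r^{q^{i_1}}\cdots u_1^{q^{i_r}}$ together with denominators $L_{i_1}^{s_r}\cdots L_{i_r}^{s_1}$ assemble into $\Lis_{\tilde{\fs}}(\tilde{\bu})$.

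For the convergence in $(1)$, using the block sizes $d_m = s_m + \cdots + s_r$ I would establish by induction on $i$ an entrywise bound on $P_i$ which, combined with the hypothesized coordinate-wise estimate on $\bx$, forces $\|P_i \bx^{(i)}\|_\infty \to 0$ fast enough to give convergence; the prescribed bound $|x_{d_1+\cdots+d_{m-1}+j}|_\infty < q^{-(d_m-j)+d_m q/(q-1)}$ is tailored precisely so that this succeeds, and is met by $\bv_{\fs,\bu}$ whenever $\tilde{\bu}\in \DD_{\tilde{\fs},\infty}$. For $(2)$, the uniform hypothesis $\|\bx\|_v < 1$ combined with the fact that $|L_i|_v$ stays bounded away from zero for all sufficiently large $i$ (since only finitely many $\theta - \theta^{q^j}$ are divisible by $v$) yields the $v$-adic convergence by the same inductive estimate.

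The main obstacle will be the combinatorial verification that the chains of block-transfers organized by the recursion reproduce exactly the ordered iterated sum defining $\Lis_{\tilde{\fs}}(\tilde{\bu})$, with the correct matching between block sizes $d_i$ and weight components $s_i$, the correct Carlitz denominators $L_{i_j}^{s_{r-j+1}}$, and the correct sign. I would organize this via a bijection between monotone paths through the $r$ blocks and summation indices $(i_1,\ldots,i_r)$ with $i_1\geq \cdots \geq i_r \geq 0$, verifying the match by induction on $r$ with the base case $r = 1$ reducing to the Anderson--Thakur logarithmic interpretation for $\bC^{\otimes s}$.
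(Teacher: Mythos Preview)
The paper does not prove this theorem itself: it is quoted from \cite{CM19a} and \cite{CM19b}. That said, the paper does reproduce the key computational ingredient as Proposition~\ref{P: Chen20}, which gives closed formulas for the $\wt(\fs)$-th row of the logarithm coefficients $P_i$, and then uses these in Theorem~\ref{T: Chen} to evaluate the $\wt(\fs)$-th coordinate of $\log_{G_{\fs,\bu}}(\bx)$ for general $\bx$. Your proposal is in the same spirit: set up the recursion for $P_i$ from the functional equation, solve it block by block using the upper-triangular structure of $E$, and read off the CMSPL at $\bv_{\fs,\bu}$. The combinatorics you sketch (chains of block-transfers giving the ordered indices $i_1\ge\cdots\ge i_r\ge 0$, with the sign $(-1)^{r-1}$) is exactly what the closed formulas in Proposition~\ref{P: Chen20} encode, and your inductive reduction to $r=1$ (Anderson--Thakur for $\bC^{\otimes s}$) is the natural base case. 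So for part~(1) your plan is sound and matches the approach in the cited references.

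There is, however, a genuine error in your argument for the $v$-adic convergence in part~(2). You claim that $|L_i|_v$ stays bounded away from zero because ``only finitely many $\theta-\theta^{q^j}$ are divisible by $v$''. This is false: $v\mid(\theta-\theta^{q^j})$ if and only if $\deg_\theta v\mid j$, which happens for infinitely many $j$, and in fact $\ord_v(L_i)=\lfloor i/\deg_\theta v\rfloor\to\infty$, so $|L_i|_v\to 0$ and $|1/L_i^{s}|_v\to\infty$. The correct reason for convergence is that, under the hypothesis $\|\bx\|_v<1$ (and $|u_r|_v<1$ for $\bv_{\fs,\bu}$), the numerators $x_{m,j}^{q^i}$ decay like $\|\bx\|_v^{q^i}$, i.e.\ doubly exponentially in $i$, which dominates the merely exponential growth $q_v^{\lfloor i/\deg_\theta v\rfloor\cdot s}$ of $|1/L_i^{s}|_v$. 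You should rewrite the $v$-adic estimate along these lines; the rest of your plan carries through.
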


\begin{remark}
In fact, all coordinates of $\log_{G_{\fs,\bu}}\left( \bv_{\fs,\bu} \right)$ can be written explicitly in~\cite{CGM19}, and the tractable coordinates (see Definition \ref{def tractable}) of $\log_{G_{\fs,\bu}}\left( \bv_{\fs,\bu} \right)_{v}$ are given explicitly in~\cite{CM19a}.
\end{remark}

We put $v(t):=v|_{\theta=t}\in \FF_{q}[t]$. Define the local ring $\mathcal{O}_{\CC_{v}} := \{ \alpha \in \CC_{v} ; |\alpha|_{v} \leq 1 \}$ and denote by $\fm_{v}$ the maximal ideal of $\mathcal{O}_{\CC_{v}}$.  
The purpose of constructing $G_{\fs, \bu}$ for given $\fs$ and $\bu$ is for the purpose
of connecting $\Lis_{\tilde{\fs}}(\tilde{\bu})$ as well as $\Lis_{\tilde{\fs}}(\tilde{\bu})_{v}$ with a coordinate logarithm of the special algebraic point $\bv_{\fs, \bu}$, where the depth one case was established in~\cite{AT90}. The following provides an approach to extend the $v$-adic defining domains of CMSPL's.

\begin{proposition}[\textnormal{\cite[Prop.~4.1.1]{CM19a}}]\label{proposition-log-converge}
Let $\fs=(s_{1},\ldots,s_{r})\in \NN^{r}$ and $\bu:=(u_{1}, \dots, u_{r}) \in \ok^{r} \cap \DDDef_{\fs, v}$ defined in \eqref{E: Dr,v}. Let $G_{\fs,\bu}$ be the $t$-module defined in \eqref{E:Explicit t-moduleCMPL} and $\bv_{\fs, \bu} \in G_{\fs,\bu}(\ok)$ be defined in \eqref{E:v_s,u}. Let $\ell \geq 1$ be an integer such that each image of $u_{i}$ in $\mathcal{O}_{\CC_{v}} / \fm_{v} \cong \overline{\FF_{q_{v}}}$ is contained in $\FF_{q_{v}^{\ell}}$. Let $d_{1},\ldots,d_{r}$ be defined in \eqref{E: d i}. Then
\[ \bigl|\!\bigl| [v(t)^{d_{1} \ell} - 1] [v(t)^{d_{2} \ell} - 1] \cdots [v(t)^{d_{r} \ell} - 1] \bv_{\fs,\bu} \bigr|\!\bigr|_{v} < 1. \]
In particular,
\[ \log_{G_{\fs,\bu}} \left([v(t)^{d_{1} \ell} - 1] [v(t)^{d_{2} \ell} - 1] \cdots [v(t)^{d_{r} \ell} - 1] \bv_{\fs,\bu} \right)_{v} \]
converges in $\Lie G_{\fs,\bu}(\CC_{v})$.
\end{proposition}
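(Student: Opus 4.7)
The plan is to reduce modulo $\fm_v$ and exploit the block upper-triangular structure of $G_{\fs,\bu}$, which admits a canonical filtration by sub-$t$-modules whose successive quotients are isomorphic to Carlitz tensor powers.

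\textbf{Setup and filtration.} Since $|u_j|_v \leq 1$ for every $j$, the entries of the structure matrix $\theta I + N + E\tau$ and of $\bv_{\fs,\bu}$ all lie in $\mathcal{O}_{\CC_v}$, so $G_{\fs,\bu}$ and $\bv_{\fs,\bu}$ have well-defined reductions $\bar{G}_{\fs,\bu}$ and $\bar{\bv}_{\fs,\bu}$ modulo $\fm_v$; by the hypothesis on $\ell$, all coordinates of $\bar{\bv}_{\fs,\bu}$ (which are, up to sign, products of the $\bar{u}_j$'s) lie in $\FF_{q_v^\ell}$. The desired bound is therefore equivalent to the assertion that the product operator $[v(t)^{d_1 \ell} - 1] \cdots [v(t)^{d_r \ell} - 1]$ annihilates $\bar{\bv}_{\fs,\bu}$ in $\bar{G}_{\fs,\bu}(\FF_{q_v^\ell})$. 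The block upper-triangular form of $E$ implies that, for each $1 \leq \ell' \leq r$, the subspace $F_{\ell'} \subset G_{\fs,\bu}$ consisting of those points whose blocks beyond index $\ell'$ vanish is a sub-$t$-module; this yields a filtration $0 = F_0 \subset F_1 \subset \cdots \subset F_r = G_{\fs,\bu}$ whose successive quotient $F_i / F_{i-1}$ is canonically identified with the Carlitz tensor power $\bC^{\otimes d_i}$, because the diagonal block $E[ii]$ is precisely the structure matrix of $\bC^{\otimes d_i}$.

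\textbf{Key lemma and conclusion.} The core technical input I would establish is: for each $s \geq 1$, the operator $[v(t)^{s\ell}]$ acts as the identity on every $\FF_{q_v^\ell}$-valued point of the reduction $\bar{\bC}^{\otimes s}$ modulo $\fm_v$. This is exactly where Papanikolas' explicit formula for the leading Frobenius-twist coefficients of $[t^m]$ on $\bC^{\otimes s}$ (cited in the introduction) enters, since it lets one write $[v(t)^{s\ell}]$ modulo $\fm_v$ as a $\tau$-polynomial which acts trivially on $\FF_{q_v^\ell}$-points. Heuristically, $v(\overline{\theta}) = 0$ kills the genuine constant term of $[v(t)]$, leaving only nilpotent contributions coming from $N_s$ (which die after $s$ iterations) and Frobenius-twist contributions dominated by $\tau^{d_v}$ (which become the identity on $\FF_{q_v^\ell}$ after $\ell$ iterations), so the exponent $s\ell$ is precisely what is needed to kill both contributions simultaneously. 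Granted the lemma, the proof concludes by induction along the filtration: the operators $[v(t)^{d_i \ell} - 1]$ commute pairwise because $[-]$ is a ring homomorphism, and applying them in the order $i = r, r-1, \ldots, 1$ drives the running vector through successive layers, since the key lemma applied to $\bar{F}_i / \bar{F}_{i-1} \cong \bar{\bC}^{\otimes d_i}$ shows that $[v(t)^{d_i \ell} - 1]$ sends $\bar{F}_i(\FF_{q_v^\ell})$ into $\bar{F}_{i-1}(\FF_{q_v^\ell})$. After all $r$ operators the result lies in $\bar{F}_0 = 0$, yielding $\| [v(t)^{d_1 \ell} - 1] \cdots [v(t)^{d_r \ell} - 1] \bv_{\fs,\bu} \|_v < 1$; the $v$-adic convergence of $\log_{G_{\fs,\bu}}$ at this point then follows from the standard convergence statement recalled in Theorem~\ref{T: CMSPL as log}(2).

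\textbf{Main obstacle.} I expect the key lemma on $\bar{\bC}^{\otimes s}$ to be the technical heart of the argument: the reduction of $[v(t)]$ mixes a nilpotent Lie-algebra piece (from $N_s$) with Frobenius-twist pieces, and these two pieces do not commute, so the cross-terms that arise in the expansion of $[v(t)^{s\ell}]$ cannot be handled by naive binomial or commutator bounds and require Papanikolas' explicit computation of the relevant leading coefficients.
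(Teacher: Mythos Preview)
The paper does not prove this proposition; it is quoted from \cite[Prop.~4.1.1]{CM19a}. Your overall strategy---reduce modulo $\fm_v$, use the block upper-triangular filtration $0 = F_0 \subset \cdots \subset F_r = G_{\fs,\bu}$ with successive quotients $\bC^{\otimes d_i}$, and peel off one layer per factor $[v(t)^{d_i\ell}-1]$---is correct and is exactly the mechanism visible in the paper's own proof of Lemma~\ref{Lemma3}, which runs a closely related argument in the reverse direction.

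Your misstep is in identifying the key technical input. The lemma you need, that $[v(t)^{s\ell}]$ acts as the identity on $\bar{\bC}^{\otimes s}(\FF_{q_v^\ell})$, follows immediately from the congruence
\[
[v(t)^{s}]_{s}\, \bx \equiv \bx^{(\deg_\theta v)} \pmod{\fm_v}
\]
for $\bx \in \bC^{\otimes s}(\cO_{\CC_v})$, which is \cite[Prop.~1.6.1]{AT90} (invoked precisely this way in the proof of Lemma~\ref{Lemma3}). Iterating $\ell$ times gives $[v(t)^{s\ell}]_s\,\bx \equiv \bx^{(\ell\deg_\theta v)}$, which equals $\bx$ once $\bar{\bx}$ has coordinates in $\FF_{q_v^\ell}$. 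There is no genuine non-commutativity obstacle and no need for Papanikolas' leading-coefficient formula: Anderson--Thakur's congruence already packages the nilpotent and Frobenius contributions into a single clean statement. (Papanikolas' computation is used in this paper for a different purpose, namely Lemma~\ref{L: P-equations}, which concerns the $\tau$-degree and shape of the leading coefficient of $[b(t)]_s$ over $A$, not its reduction modulo $\fm_v$.) With this correction your argument goes through; the ``main obstacle'' you anticipate does not exist.
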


For any $\bu \in \ok^{r} \cap \DDDef_{\fs, v}$ (this is equivalent to $\tilde{\bu} \in \ok^{r} \cap \DDDef_{\tilde{\fs}, v}$), let $a \in \FF_{q}[t]$ be nonzero such that $\bigl| \! \bigl| [a] \bv_{\fs, \bu} \bigr| \! \bigr|_{v} < 1$. The $v$-adic CMSPL $\Lis_{\tilde{\fs}}$ at $\tilde{\bu}$ is defined to be

\[\Li_{(s_{r}, \dots, s_{1})}^{\star}(u_{r}, \dots, u_{1})_{v}:=\dfrac{(-1)^{r-1}}{a(\theta)} \times \left( \textrm{the} \ \wt(\fs) \textrm{-th  coordinate of} \ \log_{G_{\fs,\bu}} \left( [a] \bv_{\fs,\bu} \right)_{v} \right).
\] It is shown in~\cite{CM19a} that the $v$-adic value $\Lis_{\tilde{\fs}}(\tilde{\bu})$ for $\bu \in \ok^{r} \cap \DDDef_{\fs, v}$ is independent of the choices of $a(t)$ and the existence of such $a(t) \in \Fq[t]$ is guaranteed by Proposition \ref{proposition-log-converge}.

We now recall the  formula expressing MZV's as linear combinations of CMSPL's at algebraic points from~\cite[Thm.~5.5.2]{C14} or \cite[Thm.~5.2.5]{CM19b}. For any index $\fs\in \NN^{r}$, there are some explicit tuples $\fs_{\ell}\in \NN^{\dep(\fs_{\ell})}$ with $\dep(\fs_{\ell})\leq \dep(\fs)$ and $\wt(\fs_{\ell})=\wt(\fs)$, some explicit elements $b_{\ell}\in k$ and some explicit integral points
\[ \bu_{\ell}\in A^{\dep(\fs_{\ell})}\cap  \DD_{\fs_{\ell}, \infty}\] so that

\begin{equation}\label{E: MZV-formula}
\zeta_{A}(\fs)= \sum_{\ell}b_{\ell}\cdot (-1)^{\tiny{\rm{dep}(\fs_{\ell})}-1} \Lis_{\fs_{\ell}}(\bu_{\ell}) \in k_{\infty}.
\end{equation}So based on Proposition~\ref{proposition-log-converge} $\Lis_{\fs_{\ell}}(\bu_{\ell})_{v}$ is defined.  The $v$-adic MZV $\zeta_{A}(\fs)_{v}$ in~\cite{CM19b} is defined by

\begin{equation}\label{E: v-adic MZV-formula}  
\zeta_{A}(\fs)_{v}:= \sum_{\ell}b_{\ell}\cdot (-1)^{\tiny{\rm{dep}(\fs_{\ell})}-1} \Lis_{\fs_{\ell}}(\bu_{\ell})_{v} \in k_{v},
\end{equation} which can be thought of an analogue of Furusho's $p$-adic MZV's.  Such as the $\infty$-adic case, we define the weight and depth of the presentation $\zeta_{A}(\fs)_{v}$ to be $\wt(\fs)$ and $\dep(\fs)$ respectively.  These $\infty$-adic MZV's and $v$-adic MZV's are expressed in terms of CMSPL's at algebraic points and  have nice logarithmic interpretations in~\cite{CM19b}, extending Anderson-Thakur's work~\cite{AT90} from depth one to arbitrary depth.

\section{A trick on division points}

    The main theme in this section is to demonstrate that we are able to find a specific algebraic point at which the logarithm of the $t$-module in question converges both $\infty$-adically and $v$-adically. This trick is achieved using the uniformization of the $t$-module as well as its property of iterated extensions of Carlitz tensor powers.

    \subsection{The crucial result}

    Recall the notation $ \DDDef_{\fs, \ok} = \DD_{\fs, \infty} \cap \ok^{\dep \fs} \cap \DDDef_{\fs, v} \subset \ok^{\dep \fs}$ given in Sec.~\ref{E: DDef s ok}. Note that $\exp_{G}$ is locally isometric for any $t$-module $G$ defined over $\ok$, so we can find a small domain $\mathcal{D}_{G}\subset \Lie G(\CC_{\infty})$ on which $\exp_{G}$ is an isometry (see \cite[Lem.~5.3]{HJ16}). For each $x \in \cO_{\CC_{v}}$, we denote by $\overline{x}$ the image of $x$ in the residue field $\cO_{\CC_{v}} / \fm_{v} \cong \overline{\FF_{q_{v}}}$.
For each $\ell \in \NN$, we define a local ring $A_{(v), \ell}$ by
\[
A_{(v), \ell} := \{ x \in \ok \cap \cO_{\CC_{v}} | \overline{x} \in \FF_{q_{v}^{\ell}} \}.
\]
For each $n \in \NN$ and $\bx = (x_{i}) \in \Mat_{n \times 1}(\cO_{\CC_{v}})$, we define $\overline{\bx} := (\overline{x_{i}}) \in \Mat_{n \times 1}(\overline{\FF_{q_v}})$.

    \begin{theorem}\label{Theorem1}
        Let $\fs=(s_{1},\ldots,s_{r}) \in \NN^{r}$, $\bu \in \ok^{r}$ with $\tilde{\bu} \in \DDDef_{\tilde{\fs}, \ok}$ defined in~\eqref{E: DDef s ok}, and $G_{\fs, \bu}=(\mathbb{G}_a^d, [-])$ be the $t$-module defined in \eqref{E:Explicit t-moduleCMPL}. Given any $\bv\in G_{\fs, \bu}(\ok \cap \cO_{\CC_{v}})$ such that $\log_{G_{\fs, \bu}}(\bv) \in \Lie G_{\fs,\bu}(\CC_{\infty})$ converges $\infty$-adically. Pick a positive integer $\ell$ for which $\bv \in G_{\fs, \bu}(A_{(v), \ell})$ and define
      \[  a(t):=(v(t)^{d_1 \ell}-1)\cdots(v(t)^{d_r \ell}-1).\] 
       For each $n \in \ZZ_{\geq 0}$, put
 \[ Z_{n} := \partial[v(t)^{n}]^{-1} \log_{G_{\fs, \bu}}(\bv) \in \Lie G_{\fs, \bu}(\CC_{\infty}) \ \mathrm{and} \ \bv_{n} := \exp_{G_{\fs,\bu}}\left( Z_{n} \right) \in G_{\fs,\bu}(\CC_{\infty}).\]       
Then the following properties hold.
        \begin{enumerate}
            \item $[v(t)^{n}] \bv_{n} = \bv$ and $\bv_{n} \in G_{\fs, \bu}(\ok)$.
            \item $\bigl| \! \bigl| [a(t)] \bv_{n} \bigr| \! \bigr|_{v} < 1$ if $n$ is divisible by $\lcm(d_{1}, \dots, d_{r})$. In particular, $\log_{G_{\fs, \bu}}([a(t)] \bv_{n})_{v}$ converges $v$-adically (see Theorem \ref{T: CMSPL as log}).
            \item $\lim_{n \to \infty} \bigl| \! \bigl| Z_{n} \bigr| \! \bigr|_{\infty} = 0$. In particular, $\log_{G_{\fs,\bu}}\left( [a(t)] \bv_{n} \right)$, $\partial[a(t)] \log_{G_{\fs,\bu}}(\bv_{n})$ and $Z_{n} = \partial[v(t)^{n}]^{-1} \log_{G_{\fs, \bu}}(\bv)$ converge $\infty$-adically, and they are contained in $\mathcal{D}_{G_{\fs,\bu}}$ for sufficiently large $n$.

        \end{enumerate}
    \end{theorem}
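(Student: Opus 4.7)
The plan is to handle the three parts separately: part (1) follows from the functional equation of $\exp_{G_{\fs,\bu}}$, part (3) from the nilpotent structure of $\partial[v(t)] - v(\theta) I_{d}$, and part (2) from a filtration analysis of $G_{\fs,\bu}$ by Carlitz tensor powers combined with Papanikolas' leading-coefficient computation. I expect part (2) to be by far the main obstacle.

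For (1), the functional equation~\eqref{E: FuncEq of exp} gives
\[
[v(t)^{n}] \bv_{n} = \exp_{G_{\fs,\bu}}(\partial[v(t)^{n}] Z_{n}) = \exp_{G_{\fs,\bu}}(\log_{G_{\fs,\bu}} \bv) = \bv.
\]
Since $[v(t)^{n}](\bw) = \bv$ is a polynomial system over $\ok$, any solution in $\CC_{\infty}^{d}$ has coordinates algebraic over $\ok$, whence $\bv_{n} \in G_{\fs,\bu}(\ok)$. For (3), write $\partial[v(t)] = v(\theta) I_{d} + \mathcal{N}$, where $\mathcal{N} := v(\theta I_{d} + N) - v(\theta) I_{d}$ is nilpotent of some index $D$ because $N$ is. The finite formal-binomial expansion
\[
\partial[v(t)^{n}]^{-1} = v(\theta)^{-n} \sum_{i=0}^{D-1} \binom{-n}{i} v(\theta)^{-i} \mathcal{N}^{i},
\]
combined with $|v(\theta)|_{\infty} = q^{\deg_{\theta} v} > 1$ and the integrality of $\binom{-n}{i}$, yields $\| \partial[v(t)^{n}]^{-1} \|_{\infty} = O(|v(\theta)|_{\infty}^{-n})$, and hence $\|Z_{n}\|_{\infty} \to 0$. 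For $n$ sufficiently large, both $Z_{n}$ and $\partial[a(t)] Z_{n}$ lie in $\mathcal{D}_{G_{\fs,\bu}}$, where $\exp_{G_{\fs,\bu}}$ is an isometry; therefore $\log_{G_{\fs,\bu}}(\bv_{n}) = Z_{n}$, $\partial[a(t)] \log_{G_{\fs,\bu}}(\bv_{n}) = \partial[a(t)] Z_{n}$, and $\log_{G_{\fs,\bu}}([a(t)] \bv_{n}) = \partial[a(t)] Z_{n}$ (since $[a(t)] \bv_{n} = \exp_{G_{\fs,\bu}}(\partial[a(t)] Z_{n})$) all converge $\infty$-adically.

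For (2), the block upper-triangular shape of $[t]$ presents $G_{\fs,\bu}$ as an iterated extension whose sub-quotients are the Carlitz tensor powers $\bC^{\otimes d_{1}}, \ldots, \bC^{\otimes d_{r}}$. On the $i$-th layer $\bC^{\otimes d_{i}}$, the $\partial$-part $\partial[v(t)^{d_{i}}] = v(\theta I_{d_{i}} + N_{d_{i}})^{d_{i}}$ vanishes modulo $v$ because $v(\theta I_{d_{i}} + N_{d_{i}}) - v(\theta) I_{d_{i}}$ is nilpotent of index $\leq d_{i}$; hence $[v(t)^{d_{i}}]$ reduces modulo $v$ to a pure $\tau$-polynomial operator whose leading coefficient is controlled by Papanikolas' formula~\cite{Pp}. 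Raising to the $\ell$-th power and restricting to $A_{(v), \ell}$-points, the Frobenius $\tau^{\ell \deg_{\theta} v}$ fixes $\FF_{q_{v}^{\ell}}$, so $[v(t)^{d_{i} \ell}] - 1$ kills residues on each layer; induction along the filtration then shows that $[a(t)] = \prod_{i} ([v(t)^{d_{i} \ell}] - 1)$ kills the residue of every element of $G_{\fs,\bu}(A_{(v), \ell'})$ for a suitable $\ell'$. The divisibility hypothesis $\lcm(d_{1}, \ldots, d_{r}) \mid n$ enters precisely so that $\bv_{n}$, obtained by layerwise inversion of the Frobenius-like reduction of $[v(t)^{n}]$, has residue in a Frobenius-stable extension within the scope of this killing argument, giving $\|[a(t)] \bv_{n}\|_{v} < 1$; the $v$-adic convergence of $\log_{G_{\fs,\bu}}([a(t)] \bv_{n})_{v}$ then follows from Theorem~\ref{T: CMSPL as log}(2). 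The hardest step is this delicate coordination between Papanikolas' leading-coefficient formula, the layerwise filtration, and the residue fields of the division points $\bv_{n}$ — it is exactly what forces the divisibility hypothesis on $n$.
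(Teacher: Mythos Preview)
Your strategy matches the paper's and part~(3) is correct, but there is a gap in each of (1) and (2). In (1), the claim that ``any solution in $\CC_{\infty}^{d}$ has coordinates algebraic over $\ok$'' is false as a general principle: a polynomial system over $\ok$ may cut out a positive-dimensional variety, and then a generic $\CC_{\infty}$-point has transcendental coordinates. What you actually need is that the fibre of $[v(t)^{n}]$ over $\bv$ is zero-dimensional, equivalently that the $v(t)^{n}$-torsion of $G_{\fs,\bu}$ is finite. The paper secures this by proving that $G_{\fs,\bu}$ is \emph{regular} (Proposition~\ref{prop-regular}), which rests on uniformizability and hence on the hypothesis $\tilde{\bu} \in \DD_{\tilde{\fs},\infty}$; without that input your argument for $\bv_{n} \in G_{\fs,\bu}(\ok)$ does not close.

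In (2), your sketch jumps to the ``residue'' of $\bv_{n}$ and its behaviour under the Frobenius-like reduction of $[v(t)^{n}]$, but this presupposes $\bv_{n} \in G_{\fs,\bu}(\cO_{\CC_{v}})$, and that $v$-adic integrality is itself a nontrivial step --- it is in fact where the divisibility $\lcm(d_{1},\dots,d_{r}) \mid n$ first enters. The paper isolates this as Lemma~\ref{Lemma3}: after the block-triangular reduction to depth one and the factorisation $[v(t)^{n}]_{s} = ([v(t)^{s}]_{s})^{n/s}$, one writes $[v(t)^{s}]_{s}\bv' = \bv$ in coordinates via Lemma~\ref{L: P-equations} (the Papanikolas computation you cite, which for $s \mid \deg_{t} b$ makes the leading $\tau$-coefficient matrix unipotent) and runs a contradiction on $\max_{j} |x_{j}|_{v}$ to force $\|\bv'\|_{v} \leq 1$. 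Only then does the congruence $[v(t)^{s}]_{s}\bv' \equiv (\bv')^{(\deg_{\theta} v)} \pmod{\fm_{v}}$ of \cite[Prop.~1.6.1]{AT90} pin down the residue in $\FF_{q_{v}^{\ell}}$ and feed into Proposition~\ref{proposition-log-converge}. Your outline lists the right ingredients but elides this integrality argument.
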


In order to prove the theorem above, we need to establish some lemmas. For any positive integer $s$, we denote by $\bC^{\otimes s}=(\GG_{a}^{s},[-]_{s})$ the $s$-th tensor power of the Carlitz module, which is defined over $A$. Precisely,
\begin{align} \label{def-carlitz}
[t]_{s} := \left(
\begin{array}{cccc}
\theta & 1 & \cdots & 0 \\
& \theta & \ddots & \vdots \\
& & \ddots & 1 \\
& & & \theta
\end{array}
\right)
+  \left(
\begin{array}{cccc}
0 & 0 & \cdots & 0 \\
0 & 0 & \cdots & 0 \\
\vdots & \vdots & & \vdots \\
1 & 0 & \cdots & 0
\end{array}
\right) \tau
\in \Mat_{s}(A[\tau]).\end{align}
By \cite[Cor.~3.5.4]{Pp}  for each $m \in \ZZ_{\geq 0}$ with $m \equiv \ell \pmod s \ (0 < \ell \leq s)$, the leading coefficient matrix of $[t^{m}]_{s}$ is of the form
\[ \left(
\begin{array}{cccccc}
0 & \cdots & \cdots & \cdots & \cdots & 0 \\
\vdots & \ddots & & & & \vdots \\
0 & & \ddots & & & \vdots \\
1 & \ddots & & \ddots & & \vdots \\
* & \ddots & \ddots & & \ddots & \vdots \\
* & * & 1 & 0 & \cdots & 0
\end{array}
\right)
\]
where the $(i, j)$-th component is $1$ (resp.\ $0$) if $i = j + s - \ell$ (resp.\ $i < j + s - \ell$). Furthermore, we have
\[ \deg_{\tau} [t^{m}]_{s} = \left\lceil \dfrac{m}{s} \right\rceil. \] 
So the above  properties directly lead to the following.

\begin{lemma}\label{L: P-equations} Let $s$ be a positive integer and
let $b(t) \in \Fq[t]$ be a monic polynomial such that $\deg_{t} b(t)$ is divisible by $s$. Then there exist polynomials $f_{ij}(X)\in  X A[X]$ with $\deg_{X}f_{ij}(X)\leq q^{(\deg_{t} b(t))/s}$ and $\deg_Xf_{ij}(X)<q^{(\deg_{t} b(t))/s}$ if $i\leq j$ so that 
for any 
$\bx=(x_{1},\ldots,x_{s})^{\tr}\in \bC^{\otimes s}(\ok) = \ok^{s}$ and $(y_{1},\ldots,y_{s})^{\tr}:=[b(t)]_{s} \bx$, we have 
        \begin{equation} \label{eq1}
            \begin{split}
                x_{1}^{q^{(\deg_{t} b(t))/s}}+\sum_{j=1}^{s}f_{1j}(x_j) & = y_1 \\
                x_{2}^{q^{(\deg_{t} b(t))/s}}+\sum_{j=1}^{s}f_{2j}(x_j) & = y_2 \\
                & \vdots\\
                x_{s}^{q^{(\deg_{t} b(t))/s}}+\sum_{j=1}^{s}f_{s j}(x_j) & =y_{s}.
            \end{split}
        \end{equation}
        
\end{lemma}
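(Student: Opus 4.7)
Set $m := (\deg_{t} b(t))/s$, so that by hypothesis $m \in \NN$, and write $b(t) = \sum_{j=0}^{ms} c_{j} t^{j}$ with $c_{ms} = 1$ and $c_{j} \in \FF_{q}$. Expand $[b(t)]_{s} = \sum_{j=0}^{ms} c_{j} [t^{j}]_{s}$ in the form $\sum_{k=0}^{m} M_{k} \tau^{k}$ with $M_{k} \in \Mat_{s}(A)$. The fact that the $\tau$-degree is at most $m$ follows from Papanikolas's formula $\deg_{\tau}[t^{j}]_{s} = \lceil j/s \rceil$, with equality in the bound $\lceil j/s\rceil \le m$ precisely when $(m-1)s < j \le ms$.

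The heart of the argument is to identify the leading matrix $M_{m}$. By Papanikolas's description recalled just before the lemma, for each such $j$ in the range $(m-1)s < j \le ms$, writing $j = (m-1)s + \ell'$ with $1 \le \ell' \le s$, the $\tau^{m}$-coefficient matrix of $[t^{j}]_{s}$ has $1$'s on the shifted diagonal $\{i = j' + s - \ell'\}$ and $0$'s strictly above. Summing these contributions weighted by $c_{j}$, one sees that $M_{m}$ is lower-triangular; the main diagonal receives its entries exclusively from $j = ms$ (the case $\ell' = s$), and since $c_{ms} = 1$, the diagonal of $M_{m}$ consists entirely of $1$'s.

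Writing $\by = [b(t)]_{s} \bx$ coordinate-wise gives $y_{i} = \sum_{k=0}^{m} \sum_{j=1}^{s} (M_{k})_{ij}\, x_{j}^{q^{k}}$. The plan is now simply to isolate the diagonal contribution $(M_{m})_{ii}\, x_{i}^{q^{m}} = x_{i}^{q^{m}}$ and absorb everything else into the $f_{ij}$. Concretely, define
\[
f_{ij}(X) := \sum_{k=0}^{m-1} (M_{k})_{ij}\, X^{q^{k}} \;+\; \begin{cases} (M_{m})_{ij}\, X^{q^{m}} & \text{if } i > j, \\ 0 & \text{if } i \le j. \end{cases}
\]
Each $f_{ij}$ is an $\FF_{q}$-linear (additive) polynomial with coefficients in $A$ and no constant term, so $f_{ij}(X) \in X\, A[X]$; the identity \eqref{eq1} holds by construction; and the degree bounds are immediate: for $i \le j$ we have $\deg_{X} f_{ij} \le q^{m-1} < q^{m}$, while for $i > j$ we only have $\deg_{X} f_{ij} \le q^{m}$. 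Note that for $i < j$ the coefficient $(M_{m})_{ij}$ is in fact zero (upper-triangular vanishing), and for $i = j$ it has been excluded by hand; this is precisely what is needed.

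The only nontrivial ingredient is the lower-triangular-with-$1$'s structure of $M_{m}$, and this is a direct consequence of Papanikolas's computation; all remaining steps are bookkeeping. No part of the argument requires analytic input, and the Frobenius-twist convention plays no role beyond recording $\tau^{k}$ as $X^{q^{k}}$.
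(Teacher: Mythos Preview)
Your proposal is correct and follows essentially the same approach as the paper: both derive the lemma directly from Papanikolas's description of the leading coefficient matrix and $\tau$-degree of $[t^{m}]_{s}$. The paper simply asserts that these properties ``directly lead to'' the lemma without spelling out the details, whereas you have carried out the explicit bookkeeping (isolating the contribution of the top monomial $t^{ms}$ to see that $M_{m}$ is lower-triangular with $1$'s on the diagonal, and then packaging the remaining terms into the $f_{ij}$); your argument is exactly the intended one.
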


Then Lemma~\ref{L: P-equations} enables us to ensure the $v$-adic integrality of certain $v(t)^{n}$-division points in $G_{\fs,\bu}$.   

    \begin{lemma}\label{Lemma3}
    	Given $\fs=(s_{1},\ldots,s_{r}) \in \NN^{r}$ and $\bu=(b_{1},\ldots,u_{r}) \in \ok^{r}$, let $G_{\fs, \bu}=(\mathbb{G}_a^d, [-])$ be the $t$-module defined in \eqref{E:Explicit t-moduleCMPL}. Let $n \in \NN$ be a positive integer divisible by $\mathrm{lcm}(d_{1}, \dots, d_{r})$. Fix any algebraic point $\bv\in G_{\fs, \bu}(\ok)$ and suppose that $\bv' \in G_{\fs,\bu}(\ok)$ satisfies $[v(t)^{n}] \bv' = \bv$. If $\bu \in A_{(v), \ell}^{r}$ and $\bv \in G_{\fs, \bu}(A_{(v), \ell})$ for some $\ell \in \NN$, then $\bv'$ also lies in $G_{\fs, \bu}(A_{(v), \ell})$.
    \end{lemma}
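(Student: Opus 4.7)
The plan is to proceed by descending induction on the block index $k$ from $r$ down to $1$, showing $(\bv')^{(k)} \in A_{(v),\ell}^{d_k}$ at each step. Since the matrix $E$ appearing in $[t]$ is block upper triangular, the subspaces $G_{\leq k} := \{\bx \in G_{\fs,\bu} : \bx^{(m)} = 0 \text{ for } m > k\}$ form a filtration of $G_{\fs,\bu}$ by sub-$t$-modules, with $G_{\leq k}/G_{\leq k-1} \cong \bC^{\otimes d_k}$ via projection onto the $k$-th block. Consequently $[v(t)^n]$ is block upper triangular with $(k,k)$-th diagonal block equal to $[v(t)^n]_{d_k}$, so the $k$-th block component of $[v(t)^n]\bv' = \bv$ reads
\[
[v(t)^n]_{d_k}(\bv')^{(k)} + P_k\!\left((\bv')^{(k+1)}, \ldots, (\bv')^{(r)}\right) = \bv^{(k)}
\]
for some $\FF_q$-linear operator $P_k$ whose coefficients are polynomials in $\theta$ and in the $u_i$'s, hence lie in $A_{(v),\ell}$. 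Using the inductive hypothesis and the fact that $A_{(v),\ell}$ is closed under the ring operations, the problem reduces to the following Carlitz-tensor-power statement.

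Suppose $s := d_k$ divides $n$, $\bw \in A_{(v),\ell}^s$ and $\bw' \in \ok^s$ satisfy $[v(t)^n]_s \bw' = \bw$; one must show $\bw' \in A_{(v),\ell}^s$. Applying Lemma~\ref{L: P-equations} with $b(t) = v(t)^n$ (noting $s \mid \deg_t v(t)^n = n\deg v$), I obtain the explicit system
\[
(w'_j)^{q^N} + \sum_{i=1}^s f_{ji}(w'_i) = w_j, \qquad j = 1, \ldots, s,
\]
with $N = n \deg v / s$ and each $f_{ji} \in X A[X]$ an $\FF_q$-linear polynomial of degree $\leq q^N$, strictly $< q^N$ when $i \geq j$. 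For $v$-adic integrality I would argue by contradiction: suppose $M := \max_j |w'_j|_v > 1$ and let $j^\star$ be the smallest index achieving $M$. In the $j^\star$-th equation, for $i < j^\star$ the bound $|w'_i|_v < M$ gives $|f_{j^\star i}(w'_i)|_v \leq |w'_i|_v^{q^N} < M^{q^N}$; for $i \geq j^\star$ the strict degree bound gives $|f_{j^\star i}(w'_i)|_v \leq M^{q^N-1} < M^{q^N}$; and $|w_{j^\star}|_v \leq 1 < M^{q^N}$. The ultrametric inequality then yields $M^{q^N} = |(w'_{j^\star})^{q^N}|_v < M^{q^N}$, a contradiction.

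For the residue condition $\overline{\bw'} \in \FF_{q_v^\ell}^s$, I would reduce the system modulo $\fm_v$ and apply the $q_v^\ell$-power Frobenius $\sigma^\ell$ on $\overline{\FF_{q_v}}$. Since the reduced coefficients of $f_{ji}$ lie in $\FF_{q_v}$ and each $\overline{w_j} \in \FF_{q_v^\ell}$ is $\sigma^\ell$-fixed, subtracting the $\sigma^\ell$-transformed system from the original and using both additivity of $\overline{f_{ji}}$ and the characteristic-$p$ identity $(x+y)^{q^N} = x^{q^N} + y^{q^N}$ yields $[\overline{v(t)^n}]_s (W_1, \ldots, W_s)^{\tr} = 0$, where $W_j := \sigma^\ell(\overline{w'_j}) - \overline{w'_j}$. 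The main obstacle is to conclude $W_j = 0$ for all $j$, i.e.\ that $[\overline{v(t)^n}]_s$ has trivial kernel on $\overline{\FF_{q_v}}^s$. I expect this to follow from the good reduction of the Carlitz tensor power $\bC^{\otimes s}$ at $v$, in analogy with the height-one phenomenon for the Carlitz module itself (where $[\overline{v(t)}]$ acts as the purely inseparable operator $\tau^{\deg v}$ on $\overline{\FF_{q_v}}$): all $v^n$-torsion of $\overline{\bC^{\otimes s}}$ should be trivial. A direct verification using the explicit form $[t]_s = \theta I + N + E\tau$ together with Papanikolas' leading-coefficient formula for $[t^m]_s$ should yield the required injectivity, after which $\sigma^\ell(\overline{\bw'}) = \overline{\bw'}$ and the induction is complete.
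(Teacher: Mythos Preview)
Your reduction to the Carlitz tensor power case via the block upper-triangular structure, and your contradiction argument for $v$-adic integrality, match the paper's proof essentially verbatim (the paper additionally reduces to $n=s$ via $[v(t)^{n}]_{s}=[v(t)^{s}]_{s}^{n/s}$, but this is a matter of convenience, not substance).

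The genuine gap is in the residue step. You correctly identify that everything hinges on the injectivity of $[\overline{v(t)^{n}}]_{s}$ on $\overline{\FF_{q_{v}}}^{s}$, but your proposed verification via Papanikolas' leading-coefficient formula for $[t^{m}]_{s}$ will not close it: that formula describes only the top $\tau$-coefficient, whereas injectivity of the reduction requires control of \emph{all} coefficients modulo $v$. What is actually needed is \cite[Prop.~1.6.1]{AT90}, which gives the exact congruence
\[
[v(t)^{s}]_{s}\,\bx \;\equiv\; \bx^{(\deg_{\theta} v)} \pmod{\fm_{v}},
\]
i.e.\ the reduced operator is a pure Frobenius power, hence injective. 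With this in hand your Frobenius-difference argument goes through; but the paper uses the same congruence more directly and avoids your detour entirely. After reducing to $n=s$ it simply reads off $\overline{\bv'}^{(\deg_{\theta} v)}=\overline{\bv}\in\FF_{q_{v}^{\ell}}^{s}$, and since $\FF_{q_{v}^{\ell}}$ is perfect this forces $\overline{\bv'}\in\FF_{q_{v}^{\ell}}^{s}$ immediately. So the missing ingredient in your proof is precisely the Anderson--Thakur congruence; once you invoke it, either route works, but the paper's is shorter.
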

    
    \begin{proof}
We write \[
[v(t)^{n}] = \begin{pmatrix} [v(t)^{n}]_{d_{1}} & M_{12} & \cdots & M_{1r} \\ & [v(t)^{n}]_{d_{2}} & \cdots & M_{2r} \\ & & \ddots & \vdots \\ & & & [v(t)^{n}]_{d_{r}} \end{pmatrix}\in \Mat_{d}(\ok[\tau]) ,
\] and note that since $\bu\in A_{(v),\ell}^{r}$, we have $M_{ij} \in \Mat_{d_{i} \times d_{j}}(A_{(v), \ell}[\tau])$ for all $i,j$. Hence we have $[v(t)^{n}]\in \Mat_{d}(A_{(v),\ell}[\tau])$ as $[v(t)^{n}]_{d_{i}}\in \Mat_{d_{i}}(A[\tau])$ for $i=1,\ldots,r$.

We further write
\[
\bv = \left( \begin{array}{c} \bv_{1} \\ \vdots \\ \bv_{r} \end{array} \right) \ \ (\bv_{i} \in \Mat_{d_{i}\times 1}(A_{(v), \ell})) \ \mathrm{and} \ \bv' = \left( \begin{array}{c} \bv'_{1} \\ \vdots \\ \bv'_{r} \end{array} \right) \ \ (\bv'_{i} \in \Mat_{d_{i}\times 1}(\ok)),
\]then the equation $[v(t)^{n}] \bv' = \bv$ is expressed as

\begin{equation}\label{E: Equa v'}
[v(t)^{n}]_{d_{i}} \bv'_{i} = \bv_{i} - \sum_{i < j \leq r} M_{ij} \bv'_{j} \ \ (1 \leq i \leq r).
\end{equation}

By considering \eqref{E: Equa v'} for $i=r$ first,  followed by $i=r-1,\ldots,i=1$ inductively, we are reducing to showing the case when $r=1$, ie., $\fs = (s)$. Moreover, due to the fact $[v(t)^{n}]=[v(t)^{s}]\circ \cdots \circ [v(t)^{s}]$ ($\frac{n}{s}$ times) we may assume $n = s$.

Write $\bv'=(x_{1},\ldots,x_{s})^{\tr}$ and $\bv=(y_{1},\ldots,y_{s})^{\tr}$ and then we have the system of equations~\eqref{eq1} for $b(t) = v(t)^{s}$. Let $1 \leq i \leq s$ be the minimal integer such that $|x_{i}|_{v} = \max_{1 \leq j \leq s} \{ |x_{j}|_{v} \}$. 
We first claim that $\bv'\in \bC^{\otimes s}(\ok\cap \cO_{\CC_{v}})$.
Suppose on the contrary that $|x_{i}|_{v} > 1$. Recall that $q_{v} := q^{\deg_{\theta} v}=q^{\frac{\deg_{t} b(t)}{s}}$. Then we have
\[
|f_{ij}(x_{j})|_{v} \leq \max \{ 1, |x_{j}|_{v} \}^{\deg_{X} f_{ij}(X)} \leq \left\{ \begin{array}{ll} |x_{i}|_{v}^{\deg_{X} f_{ij}(X)} & (j \geq i) \\ \max \{ 1, |x_{j}|_{v} \}^{q_{v}} & (j < i) \end{array} \right. < |x_{i}|_{v}^{q_{v}}
\]
for each $1 \leq j \leq s$. Therefore
\[
1 < |x_{i}|_{v}^{q_{v}} = \left| x_{i}^{q_{v}} + \sum_{j=1}^{s} f_{ij}(x_{j}) \right|_{v} = |y_{i}|_{v} \leq 1
\]
which leads to a contradiction, and hence $\bv' \in \bC^{\otimes s}(\ok \cap \cO_{\CC_{v}})$.

By \cite[Proposition 1.6.1]{AT90}, we have
\[[v(t)^{s}]_{s} \bv' \equiv \bv'^{(\deg_{\theta} v)}  \pmod {\fm_{v}}.
\]
So the relation $[v(t)^{s}]_{s} \bv' \equiv \bv \pmod {\fm_{v}}$ implies
\[
\overline{\bv'}^{(\deg_{\theta} v)} = \overline{\bv} \in \bC^{\otimes s}(\FF_{q_{v}^{\ell}}).
\]
Since $\FF_{q_{v}^{\ell}}$ is perfect, we have $\overline{\bv'} \in \bC^{\otimes s}(\FF_{q_{v}^{\ell}})$, and hence $\bv' \in \bC^{\otimes s}(A_{(v), \ell})$.
    \end{proof}

In what follows, we need the notion of regular $t$-modules introduced by Yu in~\cite[p.~218]{Yu97}.

\begin{definition} Let $G$ be a $t$-module defined over $\ok$. We say that $G$ is {\it regular} if there is a positive integer $\nu$ for which the $a$-torsion submodule of $G(\ok)$ is free of rank $\nu$ over $\FF_{q}[t]/(a)$ for every nonzero polynomial $a\in \FF_{q}[t]$. 
\end{definition}

The following proposition is needed in the following subsection, where we prove Theorem~\ref{Theorem1}. Moreover, it also enables us to apply Yu's sub-$t$-module theorem for the $t$-module $G_{\fs, \bu}$ when proving Theorem~\ref{T: IntrodT2} in Section \ref{final-section}.

\begin{proposition} \label{prop-regular}
For any $\fs \in \NN^{r}$ and $\bu \in \ok^{r}$ with $\tilde{\bu} \in \DD_{\tilde{\fs}, \infty}$ defined in \eqref{E: Ds,infty}, the $t$-module $G_{\fs, \bu}$ is regular.
\end{proposition}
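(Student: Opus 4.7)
The plan is to proceed by induction on $r$ using the block upper triangular form of $[t]$ in \eqref{E:Explicit t-moduleCMPL}. A direct inspection of the matrices $N$ and $E$ reveals that the first $d_{1}$ coordinates cut out a sub-$t$-module of $G_{\fs,\bu}$ on which the induced $t$-action is $\theta I_{d_{1}}+N_{1}+E[11]\tau$, which is precisely $[t]_{d_{1}}$ for $\bC^{\otimes d_{1}}$ (compare with \eqref{def-carlitz}). The quotient by this sub-$t$-module, on the last $d_{2}+\cdots+d_{r}$ coordinates, is identified with $G_{\fs',\bu'}$ for $\fs'=(s_{2},\ldots,s_{r})$ and $\bu'=(u_{2},\ldots,u_{r})$, and the hypothesis $\tilde{\bu'}\in\DD_{\tilde{\fs'},\infty}$ is inherited immediately from $\tilde{\bu}\in\DD_{\tilde{\fs},\infty}$. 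Thus I would invoke the short exact sequence
\[
0 \to \bC^{\otimes d_{1}} \to G_{\fs,\bu} \to G_{\fs',\bu'} \to 0
\]
of $t$-modules as the backbone of the induction.

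The base case $r=1$ reduces to the Carlitz tensor power $\bC^{\otimes s_{1}}$, which is classically known to be uniformizable with period lattice free of rank one over $\FF_{q}[t]$; regularity with $\nu=1$ follows immediately. For the inductive step I would apply the snake lemma to the commutative ladder whose top row is the exact sequence of Lie algebras, whose bottom row is the short exact sequence on $\CC_{\infty}$-points (exact because the projection to the last $d-d_{1}$ coordinates has an $\FF_{q}$-linear section by zero-padding), and whose vertical maps are the three exponentials. The classical uniformizability of $\bC^{\otimes d_{1}}$ together with the inductive hypothesis on $G_{\fs',\bu'}$ makes the outer two exponentials surjective on $\CC_{\infty}$-points, forcing the middle one to be surjective as well; this proves that $G_{\fs,\bu}$ is uniformizable. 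The same snake lemma simultaneously produces a short exact sequence of period lattices
\[
0 \to \Lambda_{\bC^{\otimes d_{1}}} \to \Lambda_{G_{\fs,\bu}} \to \Lambda_{G_{\fs',\bu'}} \to 0,
\]
and since $\FF_{q}[t]$ is a PID and the outer terms are free of ranks $1$ and $r-1$, the middle term is finitely generated and torsion-free, hence free of rank $r$.

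With both uniformizability and freeness of the period lattice in place, I would conclude by invoking the standard identification
\[
G_{\fs,\bu}[a] \;\cong\; \tfrac{1}{a}\Lambda_{G_{\fs,\bu}}\big/\Lambda_{G_{\fs,\bu}} \;\cong\; \Lambda_{G_{\fs,\bu}}/a\Lambda_{G_{\fs,\bu}} \;\cong\; (\FF_{q}[t]/(a))^{r}
\]
for every nonzero $a\in\FF_{q}[t]$, giving regularity with $\nu=r$. The main obstacle I expect is the verification of uniformizability of $G_{\fs,\bu}$; the snake lemma reduction isolates this as the only nontrivial ingredient and hands it off to the classical case of Carlitz tensor powers. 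The hypothesis $\tilde{\bu}\in\DD_{\tilde{\fs},\infty}$ plays no direct role in the snake lemma itself and enters only to ensure that the inductive hypothesis applies to $(\fs',\bu')$.
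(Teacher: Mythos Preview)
Your argument is correct and essentially parallels the paper's, though it is more self-contained. The paper's proof is a two-line citation: uniformizability of $G_{\fs,\bu}$ is quoted from \cite[Rem.~3.3.5]{CM19a} and \cite[Rem.~2.5.2]{CM19b} (where it is obtained via the associated dual $t$-motive and its rigid analytic trivialization), and the passage from uniformizability to regularity is deferred to the argument of \cite[Prop.~6.3.2]{CM19b}. You instead establish uniformizability directly by induction on $r$ through the short exact sequence $0\to\bC^{\otimes d_{1}}\to G_{\fs,\bu}\to G_{\fs',\bu'}\to 0$ and the snake lemma, then read off the rank of the period lattice from the same sequence, and finally invoke the standard identification $G_{\fs,\bu}[a]\cong\Lambda/\partial[a]\Lambda$. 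Both routes pass through uniformizability; yours avoids the $t$-motive machinery at the cost of spelling out the d\'evissage, and as you observe it shows that the hypothesis $\tilde{\bu}\in\DD_{\tilde{\fs},\infty}$ is not actually needed for regularity (it is there because the cited uniformizability results in \cite{CM19a,CM19b} are stated under that hypothesis). One small clarification worth making explicit: in the final step, the $\FF_{q}[t]$-action on $\Lambda$ is through $\partial[-]$, so ``$\tfrac{1}{a}\Lambda$'' and ``$a\Lambda$'' should be read as $\partial[a]^{-1}\Lambda$ and $\partial[a]\Lambda$, and the isomorphism $\partial[a]^{-1}\Lambda/\Lambda\cong\Lambda/\partial[a]\Lambda$ is of $\FF_{q}[t]$-modules via $z\mapsto\partial[a]z$.
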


\begin{proof}
The $t$-module $G_{\fs, \bu}$ is uniformizable by \cite[Rem.~3.3.5]{CM19a} and \cite[Rem.~2.5.2]{CM19b}. Then by the same arguments of \cite[Prop.~6.3.2]{CM19b}, the desired result follows.
\end{proof}

    \subsection{Proof of Theorem~\ref{Theorem1}} Now we give a proof of Theorem~\ref{Theorem1}. For each $n \in \ZZ_{\geq 0}$, we have
    \begin{align*}
    [v(t)^{n}] \bv_{n} &:= [v(t)^{n}] \exp_{G_{\fs, \bu}} \left( \partial [v(t)^{n}]^{-1} \log_{G_{\fs, \bu}}(\bv) \right) = \exp_{G_{\fs, \bu}} \left( \partial [v(t)^{n}] \partial [v(t)^{n}]^{-1} \log_{G_{\fs, \bu}}(\bv) \right) \\
    &= \exp_{G_{\fs, \bu}} \left( \log_{G_{\fs, \bu}}(\bv) \right) = \bv. \end{align*}
	
	Denote by $G_{\fs, \bu}[v(t)^{n}]$ the algebraic subgroup of $v(t)^{n}$-torsion points of $G_{\fs,\bu}$. As the algebraic variety defined by $[v(t)^{n}] \bX = \bv$ is a $G_{\fs, \bu}[v(t)^{n}]$-torsor, it is a zero-dimensional variety defined over $\ok$ since $G_{\fs, \bu}$ is a regular $t$-module by~Proposition \ref{prop-regular}. Therefore we have \[ \bv_{n} \in G_{\fs, \bu}(\ok), \]  whence proving the property $(1)$.
	
	To prove the property $(2)$, we mention that by Lemma~\ref{Lemma3}, if $n$ is divisible by $\mathrm{lcm}(d_{1}, \dots, d_{r})$ then we have \[ \bv_{n} \in G_{\fs, \bu}(A_{(v), \ell}). \] Furthermore, from \cite[Prop.~4.1.1]{CM19a} the choice of $a(t)$  implies that \[ \bigl| \! \bigl| [a(t)] \bv_{n} \bigr| \! \bigr|_{v} < 1 \] for each $n \in \NN$ with $\mathrm{lcm}(d_{1}, \dots, d_{r}) | n$.

Now we aim to show that $\bigl| \! \bigl| Z_{n} \bigr| \! \bigr|_{\infty} \to 0$ as $n \to \infty$.
Let $N_{b} := \partial [b(t)] - b(\theta) I_{d}$ for each nonzero polynomial $b(t) \in \Fq[t]$. Then $N_{b}$ is a nilpotent matrix. Recall that $N$ is the nilpotent matrix given in $[t]$ in~\eqref{E:Explicit t-moduleCMPL}. 
Since
\[
N \in \Mat_{d}(\FF_{p}) \ \mathrm{and} \
\partial [t^{i}] = (\theta I_{d} + N)^{i} = \theta^{i} I_{d} + \sum_{0 \leq j < i} \binom{i}{j} \theta^{j} N^{i-j} \ (i \in \ZZ_{\geq 0}),
\]
we have $|\!| N_{b} |\!|_{\infty} < |b(\theta)|_{\infty}$. Therefore we have
\[
\bigl| \! \bigl| \partial[b(t)]^{-1} \bigr| \! \bigr|_{\infty} = \Biggl| \! \Biggl| b(\theta)^{-1} \left( \sum_{\ell = 0}^{d - 1} (- N_{b} / b(\theta))^{\ell} \right) \Biggr| \! \Biggr|_{\infty} = |b(\theta)|_{\infty}^{-1} \to 0 \ \ (\deg_{t} b(t) \to \infty)
\]
and hence
\[
\bigl| \! \bigl| Z_{n} \bigr| \! \bigr|_{\infty} = \bigl| \! \bigl| \partial [v(t)^{n}]^{-1} \log_{G_{\fs, \bu}}(\bv) \bigr| \! \bigr|_{\infty} \to 0 \ \ (n \to \infty).
\]

Finally, since $\exp_{G_{\fs, \bu}}(-)$, $\log_{G_{\fs, \bu}}(-)$, $[a(t)](-)$ and $\partial[a(t)](-)$ are continuous, the second part of the property $(3)$ follows.

    \section{The key identity}\label{Sec: Key identity}

    The main purpose of this section is to show Theorem~\ref{theorem:linear-comb}.

    \subsection{Formula for the weight coordinate}
In this subsection, we aim to give an formula for the $\wt(\fs)$-th coordinate of $\log_{G_{\fs,\bu}}$ at algebraic points in terms of CMSPL's for each $\fs \in \NN^{r}$ and $\bu \in \ok^{r}$ with $\tilde{\bu} \in \DD_{\tilde{\fs}, \infty}$. We first mention that the $\infty$-adic convergence domain of $\log_{G_{\fs,\bu}}$ is given in Theorem~\ref{T: CMSPL as log}. We then recall that the $\wt(\fs)$-th row of the coefficient matrices of $\log_{G_{\fs,\bu}}$ is explicitly given as follows.

    \begin{proposition}[{\cite[Prop.~3.2.1]{CM19a}, \cite[Prop.~3.2.2]{Chen20}}] \label{P: Chen20} Fix any $\fs=(s_{1},\ldots,s_{r})\in \NN^{r}$ and $\bu=(u_{1},\ldots,u_{r})\in \ok^{r}$, let $G_{\fs,\bu}$ be defined in \eqref{E:Explicit t-moduleCMPL}, $d_i$ be given in \eqref{E: d i} for $i=1,\ldots,r$ and $d$ be given in \eqref{E: d}. We 
        put \[\log_{G_{\fs,\bu}}:=\underset{i\geq 0}{\sum}P_i\tau^i, \ P_0:=I_d, \ P_{i} \in \Mat_{d}(\ok), \] and write the $\wt(\fs)$-th row of $P_{i}$ as 
        \[ \left( y_{1,1}^{<i>},\ldots,y_{1,d_1}^{<i>}, y_{2,1}^{<i>},\ldots,y_{2,d_2}^{<i>},\ldots, y_{r,1}^{<i>},\ldots,y_{r,d_r}^{<i>} \right).  \]Then for each $i \geq 0$, we have
        
        \begin{equation}\label{y^(i)_1,j}
            y_{1,j}^{<i>}=\frac{(\theta-\theta^{q^{i}})^{d_1-j}}{L_i^{d_1}} \hbox{ for }1\leq j\leq d_1,
        \end{equation}
        and  for each $2\leq m\leq r$ and $1\leq j\leq d_{m}$ we have
        \begin{equation}\label{y^(i)_m,j}
            y_{m,j}^{<i>} = (-1)^{m-1}(\theta-\theta^{q^{i}})^{d_m-j}\underset{0\leq i_1\leq\cdots\leq i_{m-1}<i}{\sum} \frac{u_1^{q^{i_{1}}} \cdots u_{m-1}^{q^{i_{m-1}}}}{L_{i_1}^{s_1}\cdots L_{i_{m-1}}^{s_{m-1}}L_i^{d_m}}.
        \end{equation}
    \end{proposition}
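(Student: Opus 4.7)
The plan is to extract the $\wt(\fs)$-th row of the coefficient matrices $P_{i}$ directly from the defining functional equation
\[
\log_{G_{\fs,\bu}} \circ [t] = \partial[t] \circ \log_{G_{\fs,\bu}},
\]
and then verify the stated closed forms by induction on $i$.

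First, I would write $[t] = (\theta I_{d} + N) + E \tau$ in $\Mat_{d}(\ok[\tau])$ and compare the $\tau^{i}$-coefficients on both sides of the functional equation. Since $N$ has entries in $\FF_{p}$, so that $N^{(i)} = N$, this yields the matrix recursion
\[
(\theta - \theta^{q^{i}}) P_{i} + N P_{i} - P_{i} N = P_{i-1} E^{(i-1)}, \qquad i \geq 1,
\]
with base case $P_{0} = I_{d}$. Restricting this identity to the $\wt(\fs)$-th row (i.e.\ the $d_{1}$-th row, since $\wt(\fs) = d_{1}$), the contribution of $N P_{i}$ vanishes because the last row of $N_{1}$ is zero. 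Writing $\bp_{i}$ for the $d_{1}$-th row of $P_{i}$ and using that each block $E[\ell, m]$ has nonzero entries only in position $(d_{\ell}, 1)$, I obtain the coordinate recursions
\[
(\theta - \theta^{q^{i}}) y_{m, 1}^{<i>} = y_{m, d_{m}}^{<i-1>} + \sum_{\ell = 1}^{m-1} (-1)^{m-\ell} \biggl( \prod_{e = \ell}^{m-1} u_{e}^{q^{i-1}} \biggr) y_{\ell, d_{\ell}}^{<i-1>}
\]
and $(\theta - \theta^{q^{i}}) y_{m, j}^{<i>} = y_{m, j-1}^{<i>}$ for $2 \leq j \leq d_{m}$.

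Next I would induct on $i$, with the base case $i = 0$ immediate from $P_{0} = I_{d}$ (interpreting $(\theta - \theta)^{0} = 1$ and empty sums as zero). For the inductive step, iterating the within-block recursion yields $y_{m, j}^{<i>} = (\theta - \theta^{q^{i}})^{1-j} y_{m, 1}^{<i>}$, which is precisely the $j$-dependence predicted by \eqref{y^(i)_m,j} and \eqref{y^(i)_1,j}; hence it suffices to verify the formulas at $j = 1$. Substituting the inductive hypothesis for each $y_{\ell, d_{\ell}}^{<i-1>}$ into the $j = 1$ recursion and using $L_{i}^{d_{m}} = (\theta - \theta^{q^{i}})^{d_{m}} L_{i-1}^{d_{m}}$, the remaining content reduces to the combinatorial identity
\[
\sum_{0 \leq i_{1} \leq \cdots \leq i_{m-1} < i} \frac{u_{1}^{q^{i_{1}}} \cdots u_{m-1}^{q^{i_{m-1}}}}{L_{i_{1}}^{s_{1}} \cdots L_{i_{m-1}}^{s_{m-1}} L_{i-1}^{d_{m}}} = \sum_{\ell = 1}^{m} \biggl( \prod_{e = \ell}^{m-1} u_{e}^{q^{i-1}} \biggr) \sum_{0 \leq i_{1} \leq \cdots \leq i_{\ell - 1} < i - 1} \frac{u_{1}^{q^{i_{1}}} \cdots u_{\ell - 1}^{q^{i_{\ell - 1}}}}{L_{i_{1}}^{s_{1}} \cdots L_{i_{\ell - 1}}^{s_{\ell - 1}} L_{i-1}^{d_{\ell}}},
\]
which I would prove by stratifying the left-hand sum according to $\ell := \min\{ e : i_{e} = i - 1 \}$ (setting $\ell := m$ when no such index exists). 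The telescoping $d_{\ell} = s_{\ell} + \cdots + s_{m-1} + d_{m}$ then absorbs the $L_{i-1}$-contributions of the indices $i_{\ell}, \dots, i_{m-1}$ forced to equal $i-1$.

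The main obstacle will be this combinatorial bookkeeping; although the matching of signs and $L_{i-1}$-powers is in the end forced by the definitions of $d_{\ell}$ and the block-triangular shape of $E$, tracking the Frobenius twists and performing the nested-sum decomposition carefully is the only genuine computation in the argument.
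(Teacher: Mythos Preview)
The paper does not supply its own proof of this proposition; it is quoted from \cite[Prop.~3.2.1]{CM19a} and \cite[Prop.~3.2.2]{Chen20}. Your outline is a correct and complete argument: the recursion $(\theta - \theta^{q^{i}}) P_{i} + N P_{i} - P_{i} N = P_{i-1} E^{(i-1)}$ follows exactly as you derive it, the restriction to the $d_{1}$-th row is justified since that row of $N$ vanishes, and the stratification of the nested sum by $\ell := \min\{ e : i_{e} = i-1 \}$ together with $d_{\ell} = s_{\ell} + \cdots + s_{m-1} + d_{m}$ closes the induction. This is the standard route taken in the cited references, so there is nothing to compare against within the present paper.
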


    As a consequence of the proposition above, we obtain the following crucial identity.    

    \begin{theorem}[cf.~{\cite[Thm.~3.2.9]{Chen20}}]\label{T: Chen}
        Fix any $\fs=(s_{1}, \dots, s_{r})\in\mathbb{N}^{r}$ and $\bu=(u_{1}, \dots, u_{r}) \in \ok^{r}$ with $\tilde{\bu} \in \DD_{\tilde{\fs}, \infty}$ defined in \eqref{E: Ds,infty}. Let $G_{\fs,\bu}$ be defined in \eqref{E:Explicit t-moduleCMPL} and $d_i$ be given in \eqref{E: d i} for $i=1,\ldots,r$.
        If we set 
        \[\bx:=\left(x_{1,1}, \dots, x_{1, d_{1}},x_{2,1}, \dots, x_{2, d_{2}},\ldots,x_{r,1}, \dots, x_{r, d_{r}} \right)^{\tr}\in G_{\fs, \bu}(\CC_{\infty}), \] and assume \[ |x_{m, j}|_{\infty} < q^{- (d_{m} - j) + \frac{d_{m} q}{q - 1}} \] for each $1 \leq m \leq r$ and $1 \leq j \leq d_{m}$, then
        \[
	|\theta^{\ell} x_{m, j}|_{\infty} < q^{\frac{d_{m} q}{q - 1}} \ \ \ (1 \leq m \leq r, \ 1 \leq j \leq d_{m}, \ 0 \leq \ell \leq d_{m} - j)
	\]
	and
	\[
	|\theta^{\ell} x_{m, j} u_{m-1}|_{\infty} < q^{\frac{d_{m-1} q}{q - 1}} \ \ \ (2 \leq m \leq r, \ 1 \leq j \leq d_{m}, \ 0 \leq \ell \leq d_{m} - j),
	\]
        and the $\wt(\fs)$-th coordinate of $\log_{G_{\fs, \bu}}(\bx)$ is given by 
        \begin{align*} & \sum_{j = 1}^{d_{1}} \sum_{\ell = 0}^{d_{1} - j} (-1)^{\ell} \binom{d_{1} - j}{\ell} \theta^{d_{1} - j - \ell} \Li^{\star}_{d_{1}}(\theta^{\ell} x_{1, j}) \\
        & + \sum_{2 \leq m \leq r} (-1)^{m - 1} \sum_{j = 1}^{d_{m}} \sum_{\ell = 0}^{d_{m} - j} (-1)^{\ell} \binom{d_{m} - j}{\ell} \theta^{d_{m} - j - \ell} \\
        & \times \left\{ \Li^{\star}_{(d_{m}, s_{m-1}, \dots, s_{1})}(\theta^{\ell} x_{m, j}, u_{m-1}, \dots, u_{1}) - \Li^{\star}_{(d_{m-1}, s_{m-2}, \dots, s_{1})}(\theta^{\ell} x_{m, j} u_{m-1}, u_{m-2}, \dots, u_{1}) \right\}, \end{align*}
        where in the case of $m=2$, we denote by
        \[ (d_{m-1}, s_{m-2}, \dots, s_{1}) := (d_{1}) \ \mathrm{and} \ (\theta^{\ell} x_{m, j} u_{m-1}, u_{m-2}, \dots, u_{1}) := (\theta^{\ell} x_{2, j} u_{1}). \]
       
    \end{theorem}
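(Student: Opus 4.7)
The plan is to start from the explicit expansion
\[
\log_{G_{\fs,\bu}}(\bx) \;=\; \sum_{i \geq 0} P_{i}\, \bx^{(i)}
\]
coming from Proposition \ref{P: Chen20}, take its $\wt(\fs)$-th coordinate, and match every piece to a CMSPL. Writing $\bx = (x_{m,j})$ in block form, the $\wt(\fs)$-th coordinate equals
\[
 \sum_{i \geq 0} \sum_{m=1}^{r} \sum_{j=1}^{d_{m}} y_{m,j}^{<i>}\, x_{m,j}^{q^{i}},
\]
with the $y_{m,j}^{<i>}$ given by \eqref{y^(i)_1,j} and \eqref{y^(i)_m,j}. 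The first step is to apply the binomial expansion
\[
(\theta - \theta^{q^{i}})^{d_{m}-j} \;=\; \sum_{\ell=0}^{d_{m}-j}(-1)^{\ell}\binom{d_{m}-j}{\ell}\theta^{d_{m}-j-\ell}\,\theta^{\ell q^{i}}
\]
and use $\theta^{\ell q^{i}} x_{m,j}^{q^{i}} = (\theta^{\ell} x_{m,j})^{q^{i}}$ to pull the Frobenius twist inside the argument. This cleanly turns the $m=1$ block into the desired combination of $\Li^{\star}_{d_{1}}(\theta^{\ell} x_{1,j})$, since $\sum_{i\ge 0}(\theta^{\ell}x_{1,j})^{q^{i}}/L_{i}^{d_{1}}$ is by definition $\Li^{\star}_{d_{1}}(\theta^{\ell} x_{1,j})$.

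The heart of the argument is the treatment of the $m \geq 2$ blocks. After the binomial expansion, the remaining inner double sum is
\[
 \sum_{i\ge 0}\; \sum_{0 \leq i_{1} \leq \cdots \leq i_{m-1} < i} \frac{(\theta^{\ell} x_{m,j})^{q^{i}}\, u_{m-1}^{q^{i_{m-1}}} \cdots u_{1}^{q^{i_{1}}}}{L_{i}^{d_{m}}\, L_{i_{m-1}}^{s_{m-1}} \cdots L_{i_{1}}^{s_{1}}}.
\]
The key observation, which is what produces the subtraction in the statement, is that I can relax the strict inequality $i_{m-1} < i$ to $i_{m-1} \leq i$ and compensate by removing the diagonal term $i_{m-1}=i$. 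The relaxed sum is exactly $\Li^{\star}_{(d_{m},s_{m-1},\dots,s_{1})}(\theta^{\ell} x_{m,j}, u_{m-1},\dots, u_{1})$. On the diagonal term, I use the crucial identity $d_{m}+s_{m-1} = d_{m-1}$ coming from \eqref{E: d i}, together with $u_{m-1}^{q^{i}}(\theta^{\ell}x_{m,j})^{q^{i}} = (\theta^{\ell}x_{m,j}u_{m-1})^{q^{i}}$, to recognize the diagonal contribution as $\Li^{\star}_{(d_{m-1},s_{m-2},\dots,s_{1})}(\theta^{\ell} x_{m,j} u_{m-1}, u_{m-2},\dots, u_{1})$. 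Multiplying by the sign $(-1)^{m-1}$ and the binomial coefficient recovers the formula.

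Before interchanging summations and claiming these CMSPL identifications, I need to verify the stated size bounds on $\theta^{\ell}x_{m,j}$ and $\theta^{\ell}x_{m,j}u_{m-1}$. From the hypothesis $|x_{m,j}|_{\infty} < q^{-(d_{m}-j) + d_{m}q/(q-1)}$ and $0\le \ell \le d_{m}-j$ one gets
\[
|\theta^{\ell} x_{m,j}|_{\infty} \;<\; q^{\ell-(d_{m}-j)+d_{m}q/(q-1)} \;\leq\; q^{d_{m}q/(q-1)},
\]
which is the correct bound to put $\theta^{\ell}x_{m,j}$ inside $\DD_{(d_{m},s_{m-1},\dots,s_{1}),\infty}$, given that the hypothesis $\tilde{\bu}\in \DD_{\tilde{\fs},\infty}$ provides $|u_{i}|_{\infty}\le q^{s_{i}q/(q-1)}$ for $1\le i\le r-1$ (so the tail coordinates $u_{m-1},\dots,u_{1}$ all satisfy the non-strict bound required by CMSPL convergence). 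The estimate for $\theta^{\ell}x_{m,j}u_{m-1}$ follows from multiplying these together and invoking $d_{m}q/(q-1)+s_{m-1}q/(q-1) = d_{m-1}q/(q-1)$.

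The main obstacle I anticipate is justifying the rearrangement of the $\infty$-adically convergent series so that one may first swap the outer index $i$ with the inner multi-index $(i_{1},\dots,i_{m-1})$, and then perform the ``$i_{m-1}<i$ equals $i_{m-1}\le i$ minus $i_{m-1}=i$'' subtraction while maintaining absolute convergence at every step. This is handled by the size estimates above: they show that each of the two CMSPL series on the right-hand side converges absolutely $\infty$-adically, so the formal equality is upgraded to an honest equality in $\CC_{\infty}$. Once this is in place, summing over $m$, $j$, $\ell$ (a finite sum) reassembles the claimed closed form for the $\wt(\fs)$-th coordinate of $\log_{G_{\fs,\bu}}(\bx)$.
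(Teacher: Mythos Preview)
Your proposal is correct and follows essentially the same approach as the paper's proof: both start from the explicit row entries of Proposition~\ref{P: Chen20}, apply the binomial expansion of $(\theta-\theta^{q^{i}})^{d_{m}-j}$, and for $m\geq 2$ rewrite the strict inequality $i_{m-1}<i$ as ``$i_{m-1}\leq i$ minus $i_{m-1}=i$'' using $d_{m}+s_{m-1}=d_{m-1}$. The only cosmetic difference is that the paper works with truncated partial sums $\sum_{i=0}^{n}$ and passes to the limit to justify the rearrangement, whereas you invoke absolute convergence directly; both are valid in the non-archimedean setting.
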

    
\begin{proof}
We use the same notations as in Proposition \ref{P: Chen20}.
First of all, we note that since
\[
|\theta^{\ell} x_{m, j}|_{\infty} < q^{\ell} \cdot q^{- (d_{m} - j) + \frac{d_{m} q}{q - 1}} \leq q^{d_{m} - j} \cdot q^{- (d_{m} - j) + \frac{d_{m} q}{q - 1}} = q^{\frac{d_{m} q}{q - 1}}
\]
for each $1 \leq m \leq r$, $1 \leq j \leq d_{m}$ and $0 \leq \ell \leq d_{m} - j$, and
\[
|\theta^{\ell} x_{m, j} u_{m-1}|_{\infty} < q^{\ell} \cdot q^{- (d_{m} - j) + \frac{d_{m} q}{q - 1}} \cdot q^{\frac{s_{m-1} q}{q - 1}} \leq q^{d_{m} - j} \cdot q^{- (d_{m} - j) + \frac{d_{m} q}{q - 1}} \cdot q^{\frac{s_{m-1} q}{q - 1}} = q^{\frac{d_{m-1} q}{q - 1}},
\]
for each $2 \leq m \leq r$, $1 \leq j \leq d_{m}$ and $0 \leq \ell \leq d_{m} - j$, each CMSPL in Theorem \ref{T: Chen} converges $\infty$-adically.

According to Theorem~\ref{T: CMSPL as log}, $\log_{G_{\fs,\bu}}(\bx)$ converges $\infty$-adically.
Since we write $\log_{G_{\fs,\bu}}=\sum_{i=0}^{\infty}{P_{i}}\tau^{i}$,  the $\wt(\fs)$-th coordinate of $\log_{G_{\fs, \bu}}(\bx)$ is given by
\begin{align*}
\sum_{i = 0}^{\infty} \sum_{m = 1}^{r} \sum_{j = 1}^{d_{m}} y_{m, j}^{<i>} x_{m, j}^{q^{i}} .	\end{align*}
We claim that the series 
\[\sum_{m = 1}^{r} \sum_{j = 1}^{d_{m}} \sum_{i = 0}^{\infty} y_{m, j}^{<i>} x_{m, j}^{q^{i}} \]
converges $\infty$-adically, and so it is equal to the $\wt(\fs)$-th coordinate of $\log_{G_{\fs, \bu}}(\bx)$.	

To prove the claim above, we compute $\sum_{i = 0}^{\infty} y_{m, j}^{<i>} x_{m, j}^{q^{i}}$ for each $1 \leq m \leq r$ and $1 \leq j \leq d_{m}$.
When $m = 1$, we have
\begin{align*}
\sum_{i = 0}^{n} y_{1, j}^{<i>} x_{1, j}^{q^{i}}
& = \sum_{i = 0}^{n} (\theta-\theta^{q^{i}})^{d_{1} - j} \dfrac{x_{1, j}^{q^{i}}}{L_{i}^{d_{1}}}
= \sum_{i = 0}^{n} \sum_{\ell = 0}^{d_{1} - j} \binom{d_{1} - j}{\ell} \theta^{d_{1} - j - \ell} (- \theta^{q^{i}})^{\ell} \dfrac{x_{1, j}^{q^{i}}}{L_{i}^{d_{1}}} \\
& = \sum_{\ell = 0}^{d_{1} - j} (-1)^{\ell} \binom{d_{1} - j}{\ell} \theta^{d_{1} - j - \ell} \sum_{i = 0}^{n} \dfrac{(\theta^{\ell} x_{1, j})^{q^{i}}}{L_{i}^{d_{1}}} \\
& \to \sum_{\ell = 0}^{d_{1} - j} (-1)^{\ell} \binom{d_{1} - j}{\ell} \theta^{d_{1} - j - \ell} \Li^{\star}_{d_{1}}(\theta^{\ell} x_{1, j})
\end{align*}
as $n \to \infty$ for each $1 \leq j \leq d_{1}$. When $2 \leq m \leq r$, we have
\begin{align*}
& \sum_{i = 0}^{n} y_{m, j}^{<i>} x_{m, j}^{q^{i}}
= \sum_{i = 0}^{n} (-1)^{m - 1}(\theta-\theta^{q^{i}})^{d_{m} - j} \sum_{0 \leq i_{1} \leq \cdots \leq i_{m - 1} < i} \frac{u_{1}^{q^{i_{1}}} \cdots u_{m - 1}^{q^{i_{m - 1}}} x_{m, j}^{q^{i}}}{L_{i_{1}}^{s_{1}} \cdots L_{i_{m - 1}}^{s_{m - 1}} L_{i}^{d_{m}}} \\
& = \sum_{i = 0}^{n} (-1)^{m - 1} \sum_{\ell = 0}^{d_{m} - j} \binom{d_{m} - j}{\ell} \theta^{d_{m} - j - \ell} (- \theta^{q^{i}})^{\ell} \sum_{0 \leq i_{1} \leq \cdots \leq i_{m - 1} < i} \frac{u_{1}^{q^{i_{1}}} \cdots u_{m - 1}^{q^{i_{m - 1}}} x_{m, j}^{q^{i}}}{L_{i_{1}}^{s_{1}} \cdots L_{i_{m - 1}}^{s_{m - 1}} L_{i}^{d_{m}}} \\
& = (-1)^{m - 1} \sum_{\ell = 0}^{d_{m} - j} (-1)^{\ell} \binom{d_{m} - j}{\ell} \theta^{d_{m} - j - \ell} \sum_{0 \leq i_{1} \leq \cdots \leq i_{m - 1} < i_{m} \leq n} \frac{u_{1}^{q^{i_{1}}} \cdots u_{m - 1}^{q^{i_{m - 1}}} (\theta^{\ell} x_{m, j})^{q^{i_{m}}}}{L_{i_{1}}^{s_{1}} \cdots L_{i_{m - 1}}^{s_{m - 1}} L_{i_{m}}^{d_{m}}} \\
& \to (-1)^{m - 1} \sum_{\ell = 0}^{d_{m} - j} (-1)^{\ell} \binom{d_{m} - j}{\ell} \theta^{d_{m} - j - \ell} \\
& \times \left\{ \Li^{\star}_{(d_{m}, s_{m-1}, \dots, s_{1})}(\theta^{\ell} x_{m, j}, u_{m-1}, \dots, u_{1}) - \Li^{\star}_{(d_{m-1}, s_{m-2}, \dots, s_{1})}(\theta^{\ell} x_{m, j} u_{m-1}, u_{m-2}, \dots, u_{1}) \right\}
\end{align*}
as $n \to \infty$ for each $1 \leq j \leq d_{m}$.
Hence we complete the proof of the claim as well as the desired identity.
\end{proof}

	\subsection{Application of Theorem~\ref{T: Chen}}
Consider the following $\ok$-vector spaces
\[ \ocLConv_{\infty} \subset \ocLDef_{\infty}\subset \CC_{\infty} \]
given in Definition~\ref{Def: Intro}. An important application of Theorem~\ref{T: Chen} is the following equality, which is the key for us to prove Theorem~\ref{T: IntrodT2} in the next section.

\begin{theorem}\label{theorem:linear-comb} Let notation be given in Definition~\ref{Def: Intro}. Then we have the following equality
\[ \ocLConv_{\infty} = \ocLDef_{\infty}.\]
\end{theorem}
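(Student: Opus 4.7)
The inclusion $\ocLConv_{\infty} \subset \ocLDef_{\infty}$ is immediate from $\DDConv_{\fs, \ok} \subset \DDDef_{\fs, \ok}$, so the task is to place every generator $\Li^{\star}_{\fs}(\bu)$ with $\bu \in \DDDef_{\fs, \ok}$ into $\ocLConv_{\infty}$. The plan is logarithmic: realize $\Li^{\star}_{\fs}(\bu)$ as a weight coordinate of a logarithm of an appropriate $t$-module, slide that logarithm over to a carefully chosen $v(t)^{n}$-division point on which Theorem~\ref{T: Chen} produces a formula in terms of CMSPL values already known to lie in $\DDConv_{\cdot, \ok}$, and then read off $\Li^{\star}_{\fs}(\bu)$ from the resulting identity.

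Set $\fs^{*} := \tilde{\fs}$ and $\bu^{*} := \tilde{\bu} = (u_{1}^{*}, \dots, u_{r}^{*})$; since $\widetilde{\bu^{*}} = \bu \in \DD_{\fs, \infty}$, Theorem~\ref{T: CMSPL as log}(1) guarantees that $\log_{G_{\fs^{*}, \bu^{*}}}(\bv_{\fs^{*}, \bu^{*}})$ converges $\infty$-adically with $\wt(\fs^{*})$-th coordinate $(-1)^{r-1} \Li^{\star}_{\fs}(\bu)$. I would then feed $\bv := \bv_{\fs^{*}, \bu^{*}}$ into Theorem~\ref{Theorem1}: for each $n$ divisible by $\lcm(d_{1}, \dots, d_{r})$, this yields a point $\bv_{n} \in G_{\fs^{*}, \bu^{*}}(\ok)$ satisfying $[v(t)^{n}] \bv_{n} = \bv_{\fs^{*}, \bu^{*}}$ together with the polynomial $a(t) = \prod_{i=1}^{r}(v(t)^{d_{i} \ell} - 1)$ such that $\bx_{n} := [a(t)] \bv_{n}$ has $\|\bx_{n}\|_{v} < 1$. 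Moreover, by Theorem~\ref{Theorem1}(3) the vector $\log_{G_{\fs^{*}, \bu^{*}}}(\bx_{n})$ sits inside the isometry domain $\mathcal{D}_{G_{\fs^{*}, \bu^{*}}}$ of $\exp_{G_{\fs^{*}, \bu^{*}}}$ for $n$ large, which by the isometry property forces $\|\bx_{n}\|_{\infty}$ to be arbitrarily small. In particular, the coordinates of $\bx_{n}$ satisfy the $\infty$-adic bounds required by Theorem~\ref{T: Chen}.

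Next I would compute the $\wt(\fs^{*})$-th coordinate of $\log_{G_{\fs^{*}, \bu^{*}}}(\bx_{n})$ in two different ways. First, Theorem~\ref{T: Chen} expresses it as an explicit $\bar{k}$-linear combination of CMSPL values whose leading argument is $\theta^{\ell} x_{m,j}$ or $\theta^{\ell} x_{m,j} u_{m-1}^{*}$---each of $v$-adic absolute value strictly less than $1$ thanks to $|\theta|_{v} \leq 1$, $|u_{i}^{*}|_{v} \leq 1$, and $\|\bx_{n}\|_{v} < 1$---and whose trailing arguments are coordinates of $\bu^{*}$, which lie in $\mathcal{O}_{\CC_{v}}$ and satisfy the required $\infty$-adic convergence bounds inherited from $\bu \in \DD_{\fs, \infty}$. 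Combined with the $\infty$-adic bounds built into the statement of Theorem~\ref{T: Chen}, every argument of every CMSPL then sits inside some $\DDConv_{\cdot, \ok}$, and hence the entire combinatorial expression lies in $\ocLConv_{\infty}$.

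Second, the functional equation and the relation $[v(t)^{n}] \bv_{n} = \bv_{\fs^{*}, \bu^{*}}$ give
\[
\log_{G_{\fs^{*}, \bu^{*}}}(\bx_{n}) = \partial[a(t)] \partial[v(t)^{n}]^{-1} \log_{G_{\fs^{*}, \bu^{*}}}(\bv_{\fs^{*}, \bu^{*}}).
\]
Because $\partial[t] = \theta I_{d} + N$ is block-diagonal with first block $\theta I_{d_{1}} + N_{1}$ upper triangular carrying $\theta$ on the diagonal, the matrix $\partial[a(t)] \partial[v(t)^{n}]^{-1}$ is block-diagonal with its first block upper triangular of diagonal $a(\theta)/v(\theta)^{n}$. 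The $\wt(\fs^{*})$-th coordinate coincides with the bottom entry of the first block, so extracting it multiplies the $\wt(\fs^{*})$-th coordinate of $\log_{G_{\fs^{*}, \bu^{*}}}(\bv_{\fs^{*}, \bu^{*}})$ by $a(\theta)/v(\theta)^{n}$, giving $(-1)^{r-1} a(\theta) v(\theta)^{-n} \Li^{\star}_{\fs}(\bu)$. Equating the two expressions and dividing by the nonzero scalar $(-1)^{r-1} a(\theta) v(\theta)^{-n} \in \bar{k}^{\times}$ then delivers $\Li^{\star}_{\fs}(\bu) \in \ocLConv_{\infty}$. The principal obstacle throughout is the simultaneous $\infty$-adic and $v$-adic control of the division points $\bv_{n}$; this is precisely the content of Theorem~\ref{Theorem1}, whose proof in turn rests on Lemma~\ref{Lemma3} (for $v$-adic integrality of division points) and on Papanikolas' leading-coefficient computation for Carlitz tensor powers.
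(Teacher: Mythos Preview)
Your proposal is correct and follows essentially the same route as the paper: realize $\Li^{\star}_{\fs}(\bu)$ as the weight coordinate of $\log_{G_{\tilde{\fs},\tilde{\bu}}}(\bv_{\tilde{\fs},\tilde{\bu}})$, pass to the division point $\bv_{n}$ supplied by Theorem~\ref{Theorem1}, and then expand the weight coordinate of $\log_{G}([a(t)]\bv_{n})$ via Theorem~\ref{T: Chen}, checking that the $v$-adic bound $\|[a(t)]\bv_{n}\|_{v}<1$ forces every leading argument of the resulting CMSPL's into $\DDConv_{\cdot,\ok}$.

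One point deserves more care. You write that ``the functional equation and the relation $[v(t)^{n}]\bv_{n}=\bv_{\fs^{*},\bu^{*}}$ give
\[
\log_{G_{\fs^{*},\bu^{*}}}(\bx_{n})=\partial[a(t)]\,\partial[v(t)^{n}]^{-1}\log_{G_{\fs^{*},\bu^{*}}}(\bv_{\fs^{*},\bu^{*}}).
\]''
The paper explicitly warns that this cannot be deduced from the formal functional equation of $\log_{G}$ alone, since $\log_{G}$ is not entire. The justification is the one you have the ingredients for but did not spell out: by Theorem~\ref{Theorem1}(3) all three of $Z_{n}$, $\partial[a(t)]\log_{G}(\bv_{n})$, and $\log_{G}([a(t)]\bv_{n})$ lie in the isometry domain $\mathcal{D}_{G}$ for $n$ large; from $Z_{n}\in\mathcal{D}_{G}$ and $\exp_{G}(Z_{n})=\bv_{n}$ one gets $\log_{G}(\bv_{n})=Z_{n}$; and then the entire functional equation of $\exp_{G}$ shows that $\exp_{G}$ sends both $\log_{G}([a(t)]\bv_{n})$ and $\partial[a(t)]\log_{G}(\bv_{n})$ to $[a(t)]\bv_{n}$, so injectivity of $\exp_{G}$ on $\mathcal{D}_{G}$ yields the identity. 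With this filled in, your argument matches the paper's.
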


\begin{proof}
Suppose ${\bf{n}}=(n_{1},\ldots,n_{r})\in \NN^{r}$ and ${\bf{w}}=(w_{1},\ldots,w_{r})\in \DDDef_{\bf{n},\ok}$ (so $\Lis_{\bf{n}}({\bf{w}})\in \ocLDef_{\infty}$), our goal is to show that $\Lis_{\bf{n}}({\bf{w}})\in \ocLConv_{\infty}$.

Put
\[ \fs:=\widetilde{\bf{n}}=(n_{r},\ldots,n_{1}), \ \bu:=\widetilde{\bf{w}}=(w_{r},\ldots,w_{1}) \]
and write $\fs=(s_{1},\ldots,s_{r})$. Let $G_{\fs, \bu}$ be defined in \eqref{E:Explicit t-moduleCMPL} and $\bv_{\fs, \bu}$ be defined in \eqref{E:v_s,u}. Let $\bv:= \bv_{\fs, \bu}$. For this $\bv$, we let $a(t)$, $n$, $Z_{n}$ and $\bv_{n}$ be given as in Theorem \ref{Theorem1}  by taking $n$ sufficiently large and divisible by $\lcm(d_{1}, \dots, d_{r})$ so that all the properties of Theorem \ref{Theorem1} hold.
We claim the following:
\begin{itemize}
\item[$\bullet$] $\log_{G_{\fs,\bu}}\left( [v(t)^{n}] \bv_{n} \right)=\partial[v(t)^{n}] \log_{G_{\fs,\bu}}(\bv_{n})$.
\item[$\bullet$] $\log_{G_{\fs,\bu}}\left([a(t)] \bv_{n} \right)=\partial[a(t)] \log_{G_{\fs,\bu}}(\bv_{n})$. 
\end{itemize}
We mention that although we have the functional equations of $\log_{G_{\fs, \bu}}$ as formal power series, one can not argue directly that the two identities above follow from the functional equations as $\log_{G_{\fs, \bu}}$ is not entire.

Assume the claim first. Then  we have
\begin{eqnarray*}
\partial [v(t)^{n}] \log_{G_{\fs, \bu}}([a(t)] \bv_{n})
&=& \partial [v(t)^{n}] \partial [a(t)] \log_{G_{\fs, \bu}}(\bv_{n}) \\
&=& \partial [a(t)] \log_{G_{\fs, \bu}}([v(t)^{n}] \bv_{n}) \\
&=& \partial [a(t)] \log_{G_{\fs, \bu}}(\bv_{\fs, \bu}),
\end{eqnarray*}
where the first and second equalities follow from the claim. By Theorem~\ref{T: CMSPL as log} and comparing the $\wt(\fs)$-th coordinates of the both sides, we have
\begin{align*}
\Lis_{\bf{n}}({\bf{w}}) = \Lis_{\tilde{\fs}}(\tilde{\bu}) &= \dfrac{(-1)^{r-1}}{a(\theta)} \times \left(\wt(\fs)\textrm{-th coordinate of} \  \partial[a(t)] \log_{G_{\fs, \bu}}(\bv_{\fs,\bu}) \right) \\
&= \dfrac{(-1)^{r-1} v^{n}}{a(\theta)} \times \left(\wt(\fs)\textrm{-th coordinate of} \ \log_{G_{\fs, \bu}}([a(t)] \bv_{n}) \right).
\end{align*}
Here we used the fact that the $\wt(\fs)$-th component of the $\wt(\fs)$-th row of $\partial [b(t)]$ is $b(\theta)$ and the other components are zero for each $b(t) \in \Fq[t]$.
By Theorem~\ref{Theorem1} we have that $\bigl|\!\bigl| [a(t)] \bv_{n} \bigr|\!\bigr|_{v} < 1 $. Since $\| \bu \|_{v} = \| {\bf{w}} \|_{v} \leq1$, by putting $\bx:=[a(t)]\bv_{n}$ into Theorem~\ref{T: Chen} we see that the first coordinate of each CMSPL appearing in the formula of Theorem~\ref{T: Chen} has the $v$-adic absolute value strictly less than one and hence the right hand side of the equation above is in $\ocLConv_{\infty}$.  

Now, we prove the claim above. We first recall that $\cD_{G_{\fs, \bu}}$ is the domain on which $\exp_{G_{\fs, \bu}}$ is an isometry. Since $Z_{n} \in \mathcal{D}_{G_{\fs,\bu}}$ and $\bv_{n} := \exp_{G_{\fs, \bu}}(Z_{n})$, we have $\log_{G_{\fs, \bu}}(\bv_{n}) = Z_{n}$. It follows that we obtain the first desired identity
\[ \log_{G_{\fs,\bu}}\left([v(t)^{n}] \bv_{n} \right)
= \log_{G_{\fs,\bu}}\left(\bv \right)
= \partial[v(t)^{n}] Z_{n}
= \partial[v(t)^{n}]\log_{G_{\fs,\bu}}\left(\bv_{n} \right),
\]
where the second equality comes from the definition $Z_{n} := \partial[v(t)^{n}]^{-1} \log_{G_{\fs, \bu}}(\bv)$.
Since
\[ \log_{G_{\fs,\bu}} \left([a(t)]\bv_{n} \right), \ \partial[a(t)] \log_{G_{\fs,\bu}}(\bv_{n}) \] belong to $\cD_{G_{\fs, \bu}}$, on which $\exp_{G_{\fs,\bu}}$ is an isometry,
the second desired identity
\[ \log_{G_{\fs,\bu}} \left([a(t)]\bv_{n} \right) = \partial[a(t)] \log_{G_{\fs,\bu}}(\bv_{n}) \]
follows from the functional equations of $\exp_{G_{\fs, \bu}}$ and its entireness:
\[ \exp_{G_{\fs, \bu}} \left( \log_{G_{\fs,\bu}} \left( [a(t)] \bv_{n} \right) \right) = [a(t)] \bv_{n} = \exp_{G_{\fs, \bu}} \left( \partial[a(t)] \log_{G_{\fs,\bu}}(\bv_{n}) \right). \]
\end{proof}

\section{Main theorem and proof} \label{final-section}
The primary goal of this section is to prove Theorem~\ref{T: IntrodT2}. 
\subsection{Yu's sub-$t$-module theorem}

In our function field setting, we have the following analogue of W\"ustholz's theory, called Yu's sub-$t$-module theorem.
\begin{theorem}[{\cite[Thm.~0.1]{Yu97}}]\label{Thm: sub-t-module thm}
Let $G$ be a regular t-module defined over $\ok$. Let $Z$ be a vector in $\Lie G(\CC_{\infty})$ such that $\exp_{G}(Z)\in G(\ok)$. Then the smallest linear subspace in $\Lie G(\CC_{\infty})$ defined over $\ok$, which is invariant under $\partial[t]$ and contains $Z$, is the tangent space at the origin of a sub-$t$-module $H$ of $G$ over $\ok$. 
\end{theorem}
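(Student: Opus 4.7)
My approach would mirror Yu's strategy in \cite{Yu97}, which is the function-field analog of W\"ustholz's analytic subgroup theorem. First I would set up the relevant finite-dimensional object: let $V \subset \Lie G(\CC_{\infty})$ be the smallest $\ok$-linear subspace containing $Z$ and stable under $\partial[t]$. Since $\partial[t]$ acts on the finite-dimensional $\CC_{\infty}$-vector space $\Lie G(\CC_{\infty}) \cong \CC_{\infty}^{d}$, it satisfies a polynomial relation over $\ok$, so $V$ is spanned over $\ok$ by $Z, \partial[t] Z, \dots, \partial[t^{m-1}] Z$ for some finite $m$. The goal is then to exhibit a sub-$t$-module $H \subset G$ defined over $\ok$ with $\Lie H = V$.

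The key step is to argue by contradiction: suppose no such $H$ exists, or equivalently that the Zariski closure in $G$ of the formal image $\exp_{G}(V)$ has dimension strictly greater than $\dim_{\ok} V$ (the minimality of $V$ would otherwise let one take this closure as $H$). The integrality hypothesis $\exp_{G}(Z) \in G(\ok)$, combined with the $\FF_{q}[t]$-module structure, produces a rich supply of $\ok$-rational points $[a] \exp_{G}(Z) = \exp_{G}(\partial [a] Z)$ for $a \in \FF_{q}[t]$, whose logarithms all lie in $V$. From here I would follow the standard transcendence machinery: apply a function-field Siegel's lemma to construct an auxiliary polynomial function on a product $G^{N}$ vanishing to high multiplicity at $\exp_{G}(Z)$ along directions tangent to $V$; use $\partial[t]$-invariance together with Frobenius twisting to extrapolate this vanishing to many translates $[a] \exp_{G}(Z)$; and finally invoke a zero estimate to produce an algebraic constraint forcing a proper sub-$t$-module containing $\exp_{G}(V)$, contradicting the minimality of $V$.

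The main obstacle, and indeed the technical heart of Yu's paper, is the zero estimate for $t$-modules, which is the function-field analog of the Masser-W\"ustholz or Philippon multiplicity estimate. Constructing this estimate is delicate because the underlying ring of operators $\ok[\tau]$ is non-commutative and its interaction with the characteristic-$p$ Frobenius obstructs a direct transfer of arguments from commutative algebraic groups in characteristic zero; the regularity hypothesis on $G$ enters precisely here, as it controls the torsion structure uniformly and furnishes the required counting of division points. Once the zero estimate is in place, the contradiction argument described above closes, yielding the required sub-$t$-module $H$ with $\Lie H = V$.
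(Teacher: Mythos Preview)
The paper does not prove this statement at all: Theorem~\ref{Thm: sub-t-module thm} is simply quoted from \cite[Thm.~0.1]{Yu97} and used as a black box in the proof of Theorem~\ref{T: phi-v linear}. There is therefore no ``paper's own proof'' to compare your proposal against.

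That said, your sketch is a faithful outline of Yu's original argument in \cite{Yu97}: the transcendence machinery with an auxiliary construction via Siegel's lemma, extrapolation using the $\FF_{q}[t]$-action, and a zero/multiplicity estimate for $t$-modules, with regularity controlling the torsion counting. If your task was to reproduce the proof of the cited theorem, the outline is on the right track, though of course the zero estimate you allude to is itself a substantial piece of work and your sketch does not supply it. If your task was to reproduce what \emph{this} paper does with the statement, then no proof is required---you should simply cite \cite{Yu97}.
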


Here, a sub-$t$-module of $G$ over $\ok$ is a connected algebraic subgroup of $G$ defined over $\ok$ so that it is invariant under the $[t]$-action. The following lemma plays a crucial role so that we can apply Yu's sub-$t$-module theorem appropriately to prove Theorem~\ref{T: phi-v linear}.

\begin{lemma} \label{lemma-small-v-adically}
Let $\fs \in \NN^{r}$ be an index and $\bu \in \ok^{r}$ such that $\tilde{\bu} \in \ok^{r} \cap \DDDef_{\tilde{\fs}, v}$ defined in \eqref{E: Dr,v}. For any point $\bx \in G_{\fs, \bu}(\CC_{v})$ with $\| \bx \|_{v} < 1$ and any $\epsilon > 0$, there exists $n \in \ZZ_{\geq 0}$ such that $\bigl| \! \bigl| [v(t)^{n}] \bx \bigr| \! \bigr|_{v} < \epsilon$.
\end{lemma}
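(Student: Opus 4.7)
The plan is to split the action of $[v(t)^{n}]$ on $\bx$ using the twisted-polynomial expansion $[v(t)^{n}] = \sum_{i \geq 0} M_{i, n} \tau^{i}$ in $\Mat_{d}(\ok)[\tau]$, where $M_{0, n} = \partial [v(t)^{n}]$. Since $\tilde{\bu} \in \DDDef_{\tilde{\fs}, v}$ forces $|u_{j}|_{v} \leq 1$ for every $j$, the entries of $[t]$, and hence of $[v(t)]$, lie in $\ok \cap \cO_{\CC_{v}}$. Multiplication in the twisted polynomial ring combines Frobenius twists (which preserve $\cO_{\CC_{v}}$ since $|x|_{v} \leq 1$ implies $|x^{q}|_{v} \leq 1$) with matrix addition and multiplication, so every $M_{i, n}$ satisfies $\| M_{i, n} \|_{v} \leq 1$. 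Consequently, for any $\bx$ with $\| \bx \|_{v} < 1$ the ultrametric inequality yields
\[
\bigl\| [v(t)^{n}] \bx \bigr\|_{v} \leq \max_{i \geq 0} \| M_{i, n} \|_{v} \, \| \bx \|_{v}^{q^{i}} \leq \max \bigl( \| \partial [v(t)^{n}] \|_{v} \, \| \bx \|_{v}, \ \| \bx \|_{v}^{q} \bigr),
\]
since the right-hand factor $\| \bx \|_{v}^{q}$ absorbs all $i \geq 1$ contributions uniformly via Frobenius contraction.

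Next, I would prove $\| \partial [v(t)^{n}] \|_{v} \to 0$ as $n \to \infty$. Since $\partial$ is a ring homomorphism, $\partial [v(t)^{n}] = (v(\theta) I_{d} + \tilde{N})^{n}$ where $\tilde{N} := \partial [v(t)] - v(\theta) I_{d}$. As $\partial [t] = \theta I_{d} + N$ for the nilpotent block-matrix $N$ of~\eqref{E:Explicit t-moduleCMPL} and $v \in \FF_{q}[t]$, one sees that $\tilde{N}$ is a polynomial in $N$ with no constant term, hence itself nilpotent with $\tilde{N}^{d} = 0$. The binomial theorem then yields
\[
\partial [v(t)^{n}] = \sum_{j = 0}^{d - 1} \binom{n}{j} v(\theta)^{n - j} \tilde{N}^{j},
\]
and because $| v(\theta) |_{v} = q_{v}^{-1} < 1$, $\bigl| \binom{n}{j} \bigr|_{v} \leq 1$ (as $\binom{n}{j}$ is the image of an integer in characteristic $p$), and $\| \tilde{N}^{j} \|_{v}$ is a fixed constant for each $j$, every summand tends to zero $v$-adically as $n \to \infty$. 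Note, however, that this does not by itself close the lemma: the $\| \bx \|_{v}^{q}$ contribution from the higher-$\tau$ coefficients in the previous display does not depend on $n$.

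To finish, I would iterate. By the bounds in the two preceding paragraphs, one can pick $n_{1}$ with $\bigl\| [v(t)^{n_{1}}] \bx \bigr\|_{v} \leq \| \bx \|_{v}^{q}$; applying the same argument to $\bx_{1} := [v(t)^{n_{1}}] \bx$, which still satisfies $\| \bx_{1} \|_{v} < 1$, yields $n_{2}$ with $\| \bx_{2} \|_{v} \leq \| \bx_{1} \|_{v}^{q} \leq \| \bx \|_{v}^{q^{2}}$, and iterating $k$ times gives
\[
\bigl\| [v(t)^{n_{1} + \cdots + n_{k}}] \bx \bigr\|_{v} \leq \| \bx \|_{v}^{q^{k}}.
\]
Since $\| \bx \|_{v} < 1$, the right-hand side tends to zero, and choosing $k$ sufficiently large makes it less than $\epsilon$. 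The main subtlety here, and the reason this two-scale argument is necessary, is precisely that the Frobenius-twist terms saturate at $\| \bx \|_{v}^{q}$ independently of $n$, so pure contraction of the $\partial$-part cannot drive the norm to zero in a single application.
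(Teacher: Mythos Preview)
Your proof is correct and takes a genuinely different route from the paper's. The paper proceeds by induction on the depth $r = \dep(\fs)$: for $r = 1$ it invokes \cite[Prop.~1.6.1]{AT90} to obtain the sharper estimate $\bigl\|[v(t)^{s}]\bx\bigr\|_{v} \leq \max\{\|\bx\|_{v}^{q_{v}},\ \|\bx\|_{v}/q_{v}\}$ on the Carlitz tensor power $\bC^{\otimes s}$ and iterates; for $r \geq 2$ it uses the short exact sequence $0 \to \bC^{\otimes \wt(\fs)} \to G_{\fs,\bu} \to G_{(s_{2},\dots,s_{r}),(u_{2},\dots,u_{r})} \to 0$ to reduce to lower depth. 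Your argument, by contrast, makes no use of this block-triangular structure or of \cite{AT90}: it rests only on the facts that (i) all coefficient matrices of $[v(t)^{n}]$ lie in $\Mat_{d}(\cO_{\CC_{v}})$ because $\|\bu\|_{v}\leq 1$, and (ii) $\partial[v(t)] - v(\theta)I_{d}$ is nilpotent, so the binomial expansion forces $\|\partial[v(t)^{n}]\|_{v}\to 0$. This yields the coarser but perfectly adequate bound $\bigl\|[v(t)^{n}]\bx\bigr\|_{v} \leq \max\bigl(\|\partial[v(t)^{n}]\|_{v}\,\|\bx\|_{v},\ \|\bx\|_{v}^{q}\bigr)$, and your two-scale iteration then drives the norm below any $\epsilon$. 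The paper's approach gives tighter constants (contraction by $q_{v}$ rather than $q$) and exhibits the role of the Carlitz filtration, while your approach is more elementary, self-contained, and in fact works verbatim for any $t$-module defined over $\ok\cap\cO_{\CC_{v}}$.
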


\begin{proof}
For each $\epsilon > 0$, we set
\[
\fa_{\epsilon} := \{ x \in \cO_{\CC_{v}} | \ |x|_{v} < \epsilon \}.
\]
Since $G_{\fs, \bu}$ is defined over $\ok \cap \cO_{\CC_{v}}$, it is clear that the $\Fq[t]$-action $[-]$ on $G_{\fs, \bu}$ induces an $\Fq[t]$-action on $(\cO_{\CC_{v}} / \fa_{\epsilon})^{d}$ via $[-]$ where $d$ is given in \eqref{E: d}, and without confusion we denote by $G_{\fs, \bu}(\cO_{\CC_{v}} / \fa_{\epsilon})$ for the $\Fq[t]$-module $\left( (\cO_{\CC_{v}} / \fa_{\epsilon} \right)^{d}, [-])$. Note that by definition we have the following equivalence
\[
\bigl| \! \bigl| [v(t)^{n}] \bx \bigr| \! \bigr|_{v} < \epsilon \ \Longleftrightarrow \ [v(t)^{n}] (\bx \bmod {\fa_{\epsilon}}) = 0 \ \mathrm{in} \ G_{\fs, \bu}(\cO_{\CC_{v}} / \fa_{\epsilon}).
\]

We prove the lemma by induction on the depth $r = \dep(\fs)$. When $r = 1$ and $\fs = (s)$, by \cite[Proposition 1.6.1]{AT90}, we have $f_{ij}(X) \in v X A[X]$ ($1 \leq i, j \leq r$) for $b(t) = v(t)^{s}$ in Lemma \ref{L: P-equations}. Therefore we have

\begin{equation}\label{E: v(t)x}
\bigl| \! \bigl| [v(t)^{s}] \bx \bigr| \! \bigr|_{v} \leq \max\{ \| \bx \|_{v}^{q_{v}}, \| \bx \|_{v} / q_{v} \} = \left\{ \begin{array}{ll} \| \bx \|_{v}^{q_{v}} & ( \mathrm{if} \ \| \bx \|_{v} \geq q_{v}^{- 1 / (q_{v} - 1)} ) \\ \| \bx \|_{v} / q_{v} & (\mathrm{if} \ \| \bx \|_{v} \leq q_{v}^{- 1 / (q_{v} - 1)} )\end{array} \right..
\end{equation}
We set $\bx_{i} := [v(t)^{is}] \bx$. If $\| \bx_{i} \|_{v} > q_{v}^{- 1 / (q_{v} - 1)}$ for all $i \in \ZZ_{\geq 0}$, then $q_{v}^{- 1 / (q_{v} - 1)} < \| \bx_{i} \|_{v} \leq \| \bx \|_{v}^{q_{v}^{i}}$ for all $i \in \ZZ_{\geq 0}$, where the second inequality comes from \eqref{E: v(t)x}. Since $\| \bx \|_{v} < 1$, we have a contradiction. Therefore there exists $i_{0} \in \ZZ_{\geq 0}$ such that $\| \bx_{i_{0}} \|_{v} \leq q_{v}^{- 1 / (q_{v} - 1)}$.
Then we have
\[
\| \bx_{i} \|_{v} \leq \| \bx_{i_{0}} \|_{v} / q_{v}^{i - i_{0}} \ (i \geq i_{0})
\]
and hence $\| \bx_{i} \|_{v} \to 0 \ (i \to \infty)$.

Next, let $r \geq 2$ and assume that the lemma holds for $G' := G_{(s_{2}, \dots, s_{r}), (u_{2}, \dots, u_{r})}$. Let
\[
\pi := \left( (x_{1, 1}, \dots, x_{1, d_{1}}, x_{2, 1}, \dots, x_{2, d_{2}}, \dots)^{\tr} \mapsto (x_{2, 1}, \dots, x_{2, d_{2}}, \dots)^{\tr} \right) \colon G_{\fs, \bu} \twoheadrightarrow G'
\]
be the natural projection. Then we have the following exact sequence of $\Fq[t]$-modules
\[
\xymatrix{
0 \ar@{->}[r] & \bC^{\otimes \wt(\fs)}(\cO_{\CC_{v}} / \fa_{\epsilon}) \ar@{->}[r] & G_{\fs, \bu}(\cO_{\CC_{v}} / \fa_{\epsilon}) \ar@{->}[r]^{\pi_{\epsilon}} & G'(\cO_{\CC_{v}} / \fa_{\epsilon}) \ar@{->}[r] & 0,
}\]
which is induced from the short exact sequence of $t$-modules
\[
\xymatrix{
0 \ar@{->}[r] & \bC^{\otimes \wt(\fs)} \ar@{->}[r] & G_{\fs, \bu} \ar@{->}[r]^{\pi} & G' \ar@{->}[r] & 0.
}\]
By the induction hypothesis, there exists $n' \in \ZZ_{\geq 0}$ such that
\[
\pi_{\epsilon} \left( [v(t)^{n'}] (\bx \bmod \fa_{\epsilon}) \right) = [v(t)^{n'}] \left( \pi_{\epsilon}(\bx \bmod {\fa_{\epsilon}})\right) = 0
\]
and hence $[v(t)^{n'}] (\bx \bmod {\fa_{\epsilon}}) \in \ker \pi_{\epsilon} = \bC^{\otimes \wt(\fs)}(\cO_{\CC_{v}} / \fa_{\epsilon})$. By the argument of the depth one case, there exists $n_{1} \in \ZZ_{\geq 0}$ such that
\[
[v(t)^{n_{1}}] \left( [v(t)^{n'}] (\bx \bmod {\fa_{\epsilon}}) \right) = 0,
\]
whence deriving
\[
[v(t)^{n_{1} + n'}] (\bx \bmod {\fa_{\epsilon}}) = 0.
\]
\end{proof}

We recall the notion of tractable coordinates introduced by Brownawell-Papanikolas, which is convenient for us when applying Yu's sub-$t$-module theorem.
\begin{definition} \label{def tractable}
Let $G = (\GG_{a}^{d}, [-])$ be a $t$-module over $\ok$ and let $\bX = (X_{1}, \dots, X_{d})^{\tr}$ be the coordinates of $\Lie G$. The $i$-th coordinate $X_{i}$ is called {\it tractable} if the $i$-th coordinate of $\partial[a(t)] \bX$ is equal to $a(\theta) \cdot X_{i}$ for each $a(t) \in \Fq[t]$.
\end{definition}
By the definition of the $t$-module $G_{\fs, \bu}$ in \eqref{E:Explicit t-moduleCMPL}, the $(d_{1} + \cdots + d_{i})$-th coordinate of $\Lie G_{\fs, \bu}$ is tractable for each $1 \leq i \leq r$. In particular, the $\wt(\fs)$-th coordinate of $\Lie G_{\fs, \bu}$ is tractable.

\begin{theorem}\label{T: phi-v linear} The map
\[ \phi_{v}:=\left( \Lis_{\fs}(\bu)\mapsto \Lis_{\fs}(\bu)_{v} \right): \ocLDef_{\infty} \twoheadrightarrow \ocLDef_{v} \]
is a well-defined $\ok$-linear map. 
\end{theorem}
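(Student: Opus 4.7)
The strategy is to establish well-definedness: given any $\ok$-linear relation $\sum_{j=1}^{m} a_j \Lis_{\fs_j}(\bu_j) = 0$ in $\CC_\infty$ with $\bu_j \in \DDDef_{\fs_j, \ok}$, I will derive the corresponding $v$-adic relation $\sum_j a_j \Lis_{\fs_j}(\bu_j)_v = 0$ in $\CC_v$. The main ingredients are the logarithmic interpretation of CMSPLs (Theorem~\ref{T: CMSPL as log}), the division-point construction of Theorem~\ref{Theorem1}, the tractability of the weight coordinates, and Yu's sub-$t$-module theorem (Theorem~\ref{Thm: sub-t-module thm}). Write $r_j := \dep(\fs_j)$ throughout.

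First, I would form the product $t$-module $G := \prod_{j=1}^{m} G_{\tilde{\fs_j}, \tilde{\bu_j}}$ defined over $\ok$, together with the algebraic base point $\bv := (\bv_{\tilde{\fs_j}, \tilde{\bu_j}})_j \in G(\ok)$. By Theorem~\ref{T: CMSPL as log}(1), $\log_G(\bv)$ converges $\infty$-adically, and the $\wt(\fs_j)$-th coordinate of its $j$-th block equals $(-1)^{r_j - 1} \Lis_{\fs_j}(\bu_j)$. Via Proposition~\ref{proposition-log-converge} applied componentwise (and taking the product of the resulting polynomials), I choose $a(t) \in \FF_q[t]$ so that $\|[a(t)] \bv\|_v < 1$ and $\log_G([a(t)] \bv)_v$ converges $v$-adically. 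Applying Theorem~\ref{Theorem1} to each factor with a common $n$ divisible by the lcm of all required moduli produces a division point $\bv_n \in G(\ok)$ with $[v(t)^n] \bv_n = \bv$, such that $Z_n := \log_G(\bv_n) = \partial[v(t)^n]^{-1} \log_G(\bv)$ lies in the isometry domain of $\exp_G$, and $\|[a(t)] \bv_n\|_v < 1$ so that $\log_G([a(t)] \bv_n)_v$ converges $v$-adically. Define the $\ok$-linear form $L \colon \Lie G \to \Ga$ by $L(\bx) := \sum_j a_j (-1)^{r_j - 1} \chi_j(\bx)$, where $\chi_j$ is the $\wt(\fs_j)$-th coordinate on the $j$-th block. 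Each $\chi_j$ is tractable, so $L(\partial[b(t)] \bx) = b(\theta) L(\bx)$ for every $b(t) \in \FF_q[t]$, and the hypothesis translates to $L(Z_n) = v(\theta)^{-n} L(\log_G(\bv)) = v(\theta)^{-n} \sum_j a_j \Lis_{\fs_j}(\bu_j) = 0$.

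Next, I would apply Yu's sub-$t$-module theorem to $\bv_n = \exp_G(Z_n) \in G(\ok)$. Since $G$ is regular (Proposition~\ref{prop-regular} together with the elementary fact that products of regular $t$-modules are regular), there exists a sub-$t$-module $H$ of $G$ over $\ok$ whose Lie algebra $\Lie H$ is the smallest $\partial[t]$-invariant $\ok$-subspace of $\Lie G(\CC_\infty)$ containing $Z_n$. Tractability of $L$ combined with $L(Z_n) = 0$ forces $L(\partial[t^i] Z_n) = \theta^i L(Z_n) = 0$ for every $i \geq 0$, so $L$ vanishes on the $\ok$-span of $\{\partial[t^i] Z_n : i \geq 0\}$, which equals $\Lie H$ by Yu's minimality clause. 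Moreover, $Z_n \in \Lie H(\CC_\infty)$ forces $\bv_n \in H(\CC_\infty) \cap G(\ok) = H(\ok)$, and hence $[a(t)] \bv_n \in H(\ok)$ as well.

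Finally, to transfer the vanishing to $v$-adic: since $[a(t)] \bv_n \in H(\ok)$ and $\exp_H$ agrees with the restriction of $\exp_G$ to $\Lie H$ (whence $\log_H$ agrees with $\log_G|_H$ as formal power series), the $v$-adically convergent value $\log_G([a(t)] \bv_n)_v$ lies in $\Lie H(\CC_v)$. As $L$ vanishes on $\Lie H$ over $\ok$, it also vanishes on $\Lie H(\CC_v)$, so $L(\log_G([a(t)] \bv_n)_v) = 0$. The $v$-adic functional equation $\log_G([a(t)] \bv)_v = \partial[v(t)^n] \log_G([a(t)] \bv_n)_v$ (valid because both arguments lie in the $v$-adic convergence disc by our choice of $a(t)$ and $n$) combined with the definition of $\Lis_{\fs_j}(\bu_j)_v$ yields $a(\theta) \sum_j a_j \Lis_{\fs_j}(\bu_j)_v = L(\log_G([a(t)] \bv)_v) = v(\theta)^n L(\log_G([a(t)] \bv_n)_v) = 0$, establishing the desired relation. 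The main obstacle will be this transfer step: Yu's theorem is intrinsically $\infty$-adic, so one must argue that $\log_G([a(t)] \bv_n)_v$ actually lands in $\Lie H(\CC_v)$. The resolution exploits the algebraic fact $[a(t)] \bv_n \in H(\ok)$ together with the compatibility $\exp_H = \exp_G|_{\Lie H}$ across both completions, so that the $v$-adic convergence of $\log_G$ at this point takes place entirely inside $\Lie H(\CC_v)$.
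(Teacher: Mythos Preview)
Your overall strategy---packaging the CMSPL values as tractable coordinates of $\log_G$ on a product $t$-module and invoking Yu's sub-$t$-module theorem---is the same as the paper's. Two points deserve attention.

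First, a minor omission: your relation $\sum_j a_j \Lis_{\fs_j}(\bu_j)=0$ drops the constant term. Since $\ocLDef_\infty$ is spanned by $1$ together with the CMSPL values, well-definedness of $\phi_v$ requires handling $\alpha_0+\sum_j a_j\Lis_{\fs_j}(\bu_j)=0$ with $\alpha_0\in\ok$. The paper fixes this by adjoining a trivial $\GG_a$-factor with coordinate $X_0$ (also tractable) and base point $1$, so that the hyperplane $\alpha_0 X_0+\sum_j (-1)^{r_j-1}a_j X_{j,\wt(\fs_j)}=0$ is still $\partial[t]$-invariant.

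Second, and more substantively, your transfer step is underspecified. You assert that $\log_G([a(t)]\bv_n)_v\in\Lie H(\CC_v)$ because $[a(t)]\bv_n\in H(\ok)$ and $\exp_H=\exp_G|_{\Lie H}$. But the compatibility of exponentials only yields $d\iota\circ\log_H=\log_G\circ\iota$ as \emph{formal} power series; to conclude that the $v$-adic value of $\log_G$ at a point of $H$ lies in $\Lie H(\CC_v)$, you need $\log_H$ itself to converge there, and the convergence disc of $\log_H$ (in whatever intrinsic coordinates) need not contain every $H$-point with $\|\cdot\|_v<1$ in $G$-coordinates. The paper does not route through Theorem~\ref{Theorem1} at all here: it applies Yu directly to $\partial[a(t)]\log_G(\bv)$, obtains $[a(t)]\bv\in H(\ok)$, and then invokes Lemma~\ref{lemma-small-v-adically} to find $m$ with $\|[v(t)^m a(t)]\bv\|_v$ \emph{arbitrarily} small---small enough to sit in the $v$-adic isometry domain of $\exp_H$---so that $\log_G([v(t)^m a(t)]\bv)_v=\log_H([v(t)^m a(t)]\bv)_v\in\Lie H(\CC_v)$ (this is the argument cited from \cite[Thm.~6.4.1]{CM19b}). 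Your construction via Theorem~\ref{Theorem1} only delivers $\|[a(t)]\bv_n\|_v<1$, which is not enough; you would still need Lemma~\ref{lemma-small-v-adically} (applied to $[a(t)]\bv_n$) to finish. So the detour through $\bv_n$ is both unnecessary and, as written, does not close the gap you correctly identified.
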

\begin{proof} Suppose that we have $\alpha_{0}+\sum_{i=1}^{m} \alpha_{i} \Lis_{\fn_{i}}(\bw_{i})=0$ for $\alpha_{0},\alpha_{1},\ldots,\alpha_{m}\in \ok$ (not all zero), $\bn_{i} \in \bigcup_{r > 0} \NN^{r}$, $\bw_{i} \in \DDDef_{\fn_{i}, \infty}$. Our aim is to show that 
\[\alpha_{0}+\sum_{i=1}^{m} \alpha_{i} \Lis_{\fn_{i}}(\bw_{i})_{v}=0 .\]

We set $\fs_{i} := \tilde{\fn}_{i}$ and $\bu_{i} := \tilde{\bw}_{i}$ and define the $t$-module 
\[G:=\GG_{a} \oplus \bigoplus_{i=1}^{m} G_{\fs_{i}, \bu_{i}} \]
with diagonal $t$-action, where $\GG_{a}$ is referred to the trivial $t$-module with exponential and logarithm maps given by the identity map $z\mapsto z$. 

Let $X_{0}$ be the coordinate of $\Lie \GG_{a}$ and $\mathbf{X}_{i}$ be the coordinates of $\Lie G_{\fs_{i},\bu_{i}}$ for $i=1,\ldots,m$. Put $j_{i}:=\wt(\fs_{i})$ and define $X_{i j_{i}}$ to be the $j_{i}$-th coordinate of $\mathbf{X}_{i}$ which is tractable in $\Lie G_{\fs_{i},\bu_{i}}$ for $i=1,\ldots,m$. So $\bX:=\left(X_{0},\mathbf{X}_{1}^{\tr},\ldots,\mathbf{X}_{m}^{\tr} \right)^{\tr}$ are coordinates of $\Lie G$ and $\left\{X_{0},X_{1 j_{1}}\ldots, X_{ m j_{m}}\right\}$ are tractable coordinates. 

 Put $\bv:=(1)\oplus \bigoplus_{i=1}^{m} \bv_{\fs_{i}, \bu_{i}}\in G(\ok)$. By Theorem \ref{proposition-log-converge}, there exists a nonzero polynomial $a(t) \in \Fq[t]$ such that $\bigl| \! \bigl| [a(t)] \bv \bigr| \! \bigr|_{v} < 1$. By Theorem~\ref{T: CMSPL as log} we have that $\log_{G}$ converges $\infty$-adically (resp.\ $v$-adically) at $\bv$ (resp.\ $[a(t)] \bv$). Note that  $\partial[a(t)] \log_{G}\left(\bv\right)$ is the column vector whose entries are the concatenation of $a(\theta)$ and the column vectors \[\partial[a(t)]\log_{G_{\fs_{1},\bu_{1}}}(\bv_{\fs_{1},\bu_{1}}), \ldots, \partial[a(t)]\log_{G_{\fs_{m},\bu_{m}}}(\bv_{\fs_{m},\bu_{m}}),\] and $ \log_{G}\left([a(t)]\bv\right)_{v}$ is the column vector whose entries are the concatenation of $a(\theta)$ and the column vectors \[\log_{G_{\fs_{1},\bu_{1}}}([a(t)]\bv_{\fs_{1},\bu_{1}})_{v}, \ldots, \log_{G_{\fs_{m},\bu_{m}}}([a(t)]\bv_{\fs_{m},\bu_{m}})_{v}.\] Furthermore, by Theorem~\ref{T: CMSPL as log} the value
$(-1)^{\dep(\fn_{i}) - 1} a(\theta) \Lis_{\fn_{i}}(\bw_{i})$ (resp. $(-1)^{\dep(\fn_{i}) - 1} a(\theta) \Lis_{\fn_{i}}(\bw_{i})_{v}$) occurs as the $j_{i}$-th coordinate of $\partial[a(t)] \log_{G_{\fs_{i},\bu_{i}}}(\bv_{\fs_{i},\bu_{i}})$ (resp.~$\log_{G_{\fs_{i},\bu_{i}}}([a(t)]\bv_{\fs_{i},\bu_{i}})_{v}$) for $i=1,\ldots,m$.

 Let $V$ be the smallest $\ok$-linear subvariety of $\Lie G$ for which
\begin{enumerate}
\item[$\bullet$] $V(\CC_{\infty})$ contains the vector $\partial[a(t)] \log_{G}(\bv)$.
\item[$\bullet$] $V$ is invariant under the $\partial [t]$-action.
\end{enumerate}
By Yu's sub-$t$-module theorem, we have $V=\Lie H$ for some sub-$t$-module $H$ of $G$ defined over $\ok$. We note that the hyperplane 
\[ \alpha_{0} X_{0} + (-1)^{\dep(\fn_{1}) - 1} \alpha_{1} X_{1j_1} + \cdots + (-1)^{\dep(\fn_{m}) - 1} \alpha_{m} X_{mj_m} =0  \]
is a $\ok$-linear subvariety of $\Lie G$ and contains the vector $\partial[a(t)] \log_{G}(\bv)$ as a $\CC_{\infty}$-valued point and is invariant under the $\partial [t]$-action. It follows from the definition of $V$ that 
\[V=\Lie H \subset \left\{ \alpha_{0} X_{0} + (-1)^{\dep(\fn_{1}) - 1} \alpha_{1} X_{1j_{1}} + \cdots + (-1)^{\dep(\fn_{m}) - 1} \alpha_{m} X_{mj_{m}} = 0 \right\}.   \]
As $\partial[a(t)] \log_{G}(\bv) \in \Lie H(\CC_{\infty}) \subseteq \Lie G(\CC_{\infty})$, we have that
\[
[a(t)] \bv = \exp_{G}\left( \partial[a(t)] \log_{G}(\bv) \right) = \exp_{H}\left( \partial[a(t)] \log_{G}(\bv) \right) \in H(\ok).
\]
By putting $\bx := [a(t)] \bv$ in Lemma \ref{lemma-small-v-adically}, there exisits $n \in \ZZ_{\geq 0}$ such that $[v(t)^{n}] [a(t)] \bv \in H(\ok)$ is $v$-adically small. Then by the same arguments of~\cite[Thm.~6.4.1]{CM19b}, we have that
\[
\log_{G} ([v(t)^{n} a(t)] \bv)_{v} = \log_{H} ([v(t)^{n} a(t)] \bv)_{v} \in \Lie H(\CC_{v}),
\]
and hence this vector is a $\CC_{v}$-valued point of the hyperplane above. That is, the desired linear relation holds.
\end{proof}

\subsection{Proof of Theorem~\ref{T: IntrodT2}}
Now we give a proof of Theorem~\ref{T: IntrodT2}. We first note that by~\eqref{E: MZV-formula} and~\eqref{E: v-adic MZV-formula} we have $\overline{\mathcal{Z}}_{\infty}\subset \ocLDef_{\infty}$, $\overline{\mathcal{Z}}_{v}\subset \ocLDef_{v}$, $\phi_{v}(\overline{\cZ}) = \overline{\cZ}_{v}$ and $\phi_{v}(\zeta_{A}(\fs)) = \zeta_{A}(\fs)_{v}$ for each index $\fs$. We note  that Theorem~\ref{theorem:linear-comb} implies $\ocLDef_{v}=\ocLConv_{v}$ as explained in~\eqref{E: DiagramIntrod}. Therefore, we have the following commutative diagram:
\begin{equation*}\label{E: DiagramLis}
\xymatrix{
\overline{\cZ} \ar@{->>}[d]_{\phi_{v}|_{\overline{\cZ}}} \ar@{^{(}->}[rr] & & \ocLDef_{\infty} \ar@{->>}[d]^{\phi_{v}} \ar@{=}[rr] & & \ocLConv_{\infty} & & \hcmspl{0} \ar@{->>}[ll]_{\cLis(-)} \ar@{->>}[lld]^{\cLis(-)_{v}} \\
\overline{\cZ}_{v} \ar@{^{(}->}[rr] & & \ocLDef_{v} & & \ocLConv_{v} \ar@{=}[ll] & &
}
\end{equation*}
where the commutativity $\phi_{v} \circ \cLis(-) = \cLis(-)_{v}$ comes from the definitions of $\phi_{v}$, $\cLis(-)$ and $\cLis(-)_{v}$.

Note that $\ocLDef_{\infty}$ and $\ocLDef_{v}$ form $\ok$-algebras by Proposition~\ref{stuffle-product-formula}. We first show that the map $\phi_{v}$ is a $\ok$-algebra homomorphism. Indeed, for each $x, x' \in \ocLDef_{\infty}$, let $w, w' \in \hcmspl{0}$ such that $\cLis(w) = x$ and $\cLis(w') = x'$. Since $\cLis(-)$ and $\cLis(-)_{v}$ are multiplicative in the sense of Proposition \ref{stuffle-product-formula}, we have
\begin{align*}
\phi_{v}(x \cdot x') &= \phi_{v}(\cLis(w) \cdot \cLis(w')) = \phi_{v}(\cLis(w \star w')) = \cLis(w \star w')_{v} \\
&= \cLis(w)_{v} \cdot \cLis(w')_{v} = \phi_{v}(\cLis(w)) \cdot \phi_{v}(\cLis(w')) =\phi_{v}(x) \cdot \phi_{v}(x').
\end{align*}

Next we show that $v$-adic MZV's satisfy the $q$-shuffle relations. Let $\fs$ and $\fs'$ be two indices and let $f_{j} \in \FF_{p}$ and $\fs_{j} \in \NN^{\dep(\fs_{j})}$ be as in \eqref{E:sum-shuffle}. Then we have
\begin{align*}
\zeta_{A}(\fs)_{v} \cdot \zeta_{A}(\fs')_{v} &= \phi_{v}(\zeta_{A}(\fs)) \cdot \phi_{v}(\zeta_{A}(\fs')) = \phi_{v}(\zeta_{A}(\fs) \cdot \zeta_{A}(\fs')) = \phi_{v}\left( \sum_{j} f_{j} \zeta_{A}(\fs_{j}) \right) \\
&= \sum_{j} f_{j} \phi_{v}(\zeta_{A}(\fs_{j})) = \sum_{j} f_{j} \zeta_{A}(\fs_{j})_{v}. 
\end{align*}
Therefore  Theorem \ref{T: IntrodT2} $(1)$ holds.
In particular, $\overline{\cZ}_{v}$ is closed under the product, and hence $\phi_{v}|_{\overline{\cZ}}$ is a $\ok$-algebra homomorphism. This shows Theorem \ref{T: IntrodT2} $(2)$.

\begin{remark}
In the proof above, we verify the identity
\[ \ocLDef_{v} = \ocLConv_{v}, \]
which generalizes~\cite[Cor.\ 3.2.11]{Chen20} for $v$-adic  CMSPL's at integral points.
\end{remark}

\begin{remark}
The definition of $\zeta_{A}(\fs)_{v}$ in \eqref{E: v-adic MZV-formula} a priori depends on the extensions of the $v$-adic CMSPL's $\Lis_{\fs_{\ell}}$ to $\DDDef_{\fs_{\ell}, v}$.
However, by Theorems \ref{theorem:linear-comb} and \ref{T: phi-v linear}, $\zeta_{A}(\fs)_{v}$ is the image of $\zeta_{A}(\fs)$ via the homomorphism $\phi_{v} \colon \ocLConv_{\infty} \to \ocLConv_{v}$ which is `canonical' once we fix embeddings $\ok \hookrightarrow \CC_{\infty}$ and $\ok \hookrightarrow \CC_{v}$ over $k$.
Note that the definition of $\zeta_{A}(\fs)_{v}$ does not depend on the choice of such embeddings. Indeed, 
if we take another pair of embeddings with $\phi_{v}'$ as the corresponding homomorphism, then the equality $\phi_{v}(\zeta_{A}(\fs)) = \zeta_{A}(\fs)_{v} = \phi_{v}'(\zeta_{A}(\fs))$ is still valid in $k_{v}$.
\end{remark}

\subsection{An example}\label{Subsec: Example}
    We provide an example of direct computations for Theorem~\ref{T: IntrodT2}~(1). Recall Huei-Jeng Chen's explicit formula \cite{Ch15} for the product of two Carlitz zeta values
        \begin{align}\label{E: H-J formula}
            \zeta_A(r)\zeta_A(s)&=\zeta_A(r,s)+\zeta_A(s,r)+\zeta_A(r+s) \\
            &+\sum_{i+j=r+s,~(q-1)\mid j} \left[ (-1)^{s-1}\binom{j-1}{s-1}+(-1)^{r-1}\binom{j-1}{r-1} \right] \zeta_A(i,j). \nonumber
        \end{align}

    Now, we compute the simplest case $\zeta_{A}(1)_{\theta}\cdot \zeta_{A}(1)_{\theta}$ for $q=2^{\ell}$  ($\ell\in \NN$).  Note that even for this simplest case, it still involves heavy computation on the explicit action of Carlitz tensor powers. In fact, to verify the validity of the $q$-shuffle product of $v$-adic MZV's by direct computations seems impracticable.

    We start by specializing $r=1$ and $s=1$ in Huei-Jeng Chen's formula and we get \[ \zeta_A(1) \cdot \zeta_A(1) = \zeta_A(2)\] as the characteristic of the base field is $2$. Note that this relation also follows from the definition of MZV's directly. To verify \[ \zeta_{A}(1)_{\theta} \cdot \zeta_{A}(1)_{\theta} = \zeta_{A}(2)_{\theta}, \] we recall that the Anderson-Thakur polynomials \cite{AT90,AT09} $H_{n-1}$ is equal to $1$ if $1 \leq n \leq q$. In this case, we have $$\zeta_A(n)_\theta=\Li_n^\star(1)_\theta$$ (see \cite[Sec.~5,~Sec.~6]{CM19b} for details). So our task is to calculate $\Li_n^\star(1)_\theta$. By definition, $\Li_n^\star(1)_\theta$ is given by
\begin{align*}
\frac{1}{\theta^n-1}\times \left( n\textrm{-th coordinate of }\log_{\bC^{\otimes n}}([t^n-1](1) )_{\theta} \right),
\end{align*}
where $\bC^{\otimes n}$ stands for the $n$-th tensor power of the Carlitz module defined in \eqref{def-carlitz}. One can show that \cite[Ex.~3.2.12]{Chen20} $$[t^n-1](1)=(\binom{n}{1}\theta,\binom{n}{2}\theta^2,\dots,\binom{n}{n-1}\theta^{n-1},\theta^n)^\tr$$ and consequently $$\Li_n^\star(1)_\theta= \frac{1}{\theta^n-1}\left(\Li_n^\star(\theta^n )_\theta + \sum_{1 \leq j < n} \ \sum_{i=0}^{j} (-1)^{i+j}\binom{j}{i}\theta^i \Li_n^\star(\binom{n}{j}\theta^{n-i})_\theta \right)$$ by using \cite[Thm.~3.2.9]{Chen20}. In particular, we derive that $$\Li_1^\star(1)_\theta=\frac{1}{\theta-1}\Li_1^\star(\theta)_\theta$$ and $$\Li_2^\star(1)_{\theta}=\frac{1}{\theta^2-1}\Li_2^\star(\theta^2)_\theta$$ because of characteristic $2$. Then it is clear to see that
    \begin{align*}
        \zeta_A(1)_\theta \cdot \zeta_A(1)_\theta&=\Li_1^\star(1)_\theta \cdot \Li_1^\star(1)_\theta=\frac{1}{(\theta-1)^2}\Li_1^\star(\theta)_\theta \cdot \Li_1^\star(\theta)_\theta\\
        &=\frac{1}{\theta^2-1}\Li_2^\star(\theta^2)_\theta=\Li_2^\star(1)_\theta=\zeta_A(2)_\theta.
    \end{align*}

\subsection{A conjecture}\label{Subsec: Questions} In what follows, we conjecture that the kernel of the $\ok$-algebra homomorphism in Theorem~\ref{T: IntrodT2} is generated by $\zeta_{A}(q-1)$.
 \begin{conjecture}\label{Conj: kernel}
 For any finite place $v$ of $k$, we have the following $\ok$-algebra isomorphism
 \[ \overline{\mathcal{Z}}/(\zeta_{A}(q-1))\cong \overline{\mathcal{Z}}_{v}  .\]
 \end{conjecture}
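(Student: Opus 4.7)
The inclusion $(\zeta_A(q-1)) \subseteq \ker \phi_v$ is immediate: by Goss's work, $\zeta_A(n)_v = 0$ for every positive integer $n$ divisible by $q-1$, and since $\phi_v \colon \overline{\mathcal{Z}} \twoheadrightarrow \overline{\mathcal{Z}}_v$ is promoted to a $\ok$-algebra homomorphism in Theorem~\ref{T: IntrodT2}, the principal ideal generated by $\zeta_A(q-1)$ lands in $\ker \phi_v$. The substance of the conjecture lies in the opposite containment $\ker \phi_v \subseteq (\zeta_A(q-1))$.

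My plan is to attack the reverse containment weight-by-weight. Let $\overline{\mathcal{Z}}_w$ (resp.\ $\overline{\mathcal{Z}}_{v,w}$) denote the $\ok$-vector space spanned by $\infty$-adic (resp.\ $v$-adic) MZV's of weight $w$, with the convention that the weight-zero piece is $\ok$. A preliminary step is to check that $\phi_v$ respects weight, yielding surjections $\phi_{v,w} \colon \overline{\mathcal{Z}}_w \twoheadrightarrow \overline{\mathcal{Z}}_{v,w}$ for each $w$; combined with the easy direction, Conjecture~\ref{Conj: kernel} then reduces to the graded identity
\[ \dim_{\ok} \overline{\mathcal{Z}}_{v,w} = \dim_{\ok} \overline{\mathcal{Z}}_w - \dim_{\ok} \overline{\mathcal{Z}}_{w-q+1}. \]
The subtracted term is exactly the dimension of $\zeta_A(q-1) \cdot \overline{\mathcal{Z}}_{w-q+1}$ inside $\overline{\mathcal{Z}}_w$, once one verifies that multiplication by $\zeta_A(q-1)$ is injective on $\overline{\mathcal{Z}}$, which should follow from the $t$-motivic Tannakian machinery of Chang--Papanikolas. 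The $\infty$-adic dimension $\dim_{\ok} \overline{\mathcal{Z}}_w$ is expected to satisfy the Todd--Thakur recursion and is accessible through the Anderson--Brownawell--Papanikolas criterion. The upper bound $\dim_{\ok} \overline{\mathcal{Z}}_{v,w} \leq \dim_{\ok} \overline{\mathcal{Z}}_w - \dim_{\ok} \overline{\mathcal{Z}}_{w-q+1}$ then reads off from the surjection $\phi_{v,w}$ together with the containment $(\zeta_A(q-1)) \subseteq \ker \phi_v$.

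The essential obstacle is the $v$-adic lower bound $\dim_{\ok} \overline{\mathcal{Z}}_{v,w} \geq \dim_{\ok} \overline{\mathcal{Z}}_w - \dim_{\ok} \overline{\mathcal{Z}}_{w-q+1}$, i.e.\ producing enough genuine $\ok$-linear independences among $v$-adic MZV's. A natural starting point is to exploit Yu's sub-$t$-module theorem exactly as in the proof of Theorem~\ref{T: phi-v linear}: any alleged linear relation among $v$-adic MZV's of a fixed weight would cut out a sub-$t$-module of a direct sum of the $t$-modules $G_{\fs,\bu}$. One would then hope to descend such a sub-$t$-module to the $\infty$-adic setting using the division-point technology of Theorem~\ref{Theorem1} to transport the algebraic points $\bv_{\fs,\bu}$ to cousins with controlled $v$-adic size, and conclude that the relation already holds $\infty$-adically modulo $(\zeta_A(q-1))$. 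The fundamental difficulty, however, is that no direct $v$-adic analogue of the Anderson--Brownawell--Papanikolas criterion is currently available, and the transcendence theory of $v$-adic logarithms of $t$-modules at algebraic points in the generality required has not yet been developed; consequently, short of a genuine new $v$-adic transcendence input (or a Grothendieck-type period conjecture for the ambient Tannakian category of $t$-motives that controls both sides simultaneously), the conjecture appears to lie beyond the reach of the methods of this paper alone.
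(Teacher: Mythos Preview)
The statement you are addressing is labeled and presented in the paper as a \emph{conjecture}, not a theorem; the paper offers no proof of it and explicitly discusses it as an open problem in \S\ref{Subsec: Questions}. There is therefore no proof in the paper to compare your proposal against. Your proposal is candid about this: you correctly observe that the inclusion $(\zeta_A(q-1)) \subseteq \ker \phi_v$ is already recorded in Theorem~\ref{T: IntrodT2}~(2), and you correctly conclude that the reverse inclusion lies beyond the methods of the paper for want of a $v$-adic transcendence input (an analogue of the ABP criterion or of Yu's theorem for $v$-adic logarithms).

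One substantive gap in your outline deserves mention. Your ``preliminary step'' of reducing to a weight-by-weight dimension identity tacitly assumes that $\ker \phi_v$ is a \emph{graded} ideal of $\overline{\mathcal{Z}}$, equivalently that $\overline{\mathcal{Z}}_v = \bigoplus_w \overline{\mathcal{Z}}_{v,w}$ is graded by weight. But the paper lists exactly this graded structure as consequence~(ii) \emph{of} Conjecture~\ref{Conj: kernel}, not as an input to it; the $v$-adic analogue of \cite[Thm.~2.2.1]{C14} (the Furusho--Yamashita-type statement that nonzero $v$-adic MZV's of distinct weights are $\ok$-linearly independent) is open. Thus your dimension identity is a consequence of the conjecture rather than an equivalent reformulation. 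Your diagnosis of the core obstacle is accurate, but it bites already at this gradedness step, not only at the lower bound for $\dim_{\ok} \overline{\mathcal{Z}}_{v,w}$. (A smaller point: the exact value of $\dim_{\ok} \overline{\mathcal{Z}}_w$ is itself conjectural, so even the $\infty$-adic side of your proposed identity is not ``accessible through ABP'' in the sense of being known.)
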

The conjecture above would imply the following important consequences:
\begin{itemize}
\item[(i)] $\overline{\mathcal{Z}}_{v} \cong \overline{\mathcal{Z}}_{v'}$ for any finite places $v, v'$ of $k$. 
\item[(ii)] $\overline{\mathcal{Z}}_{v}$ is a graded $\ok$-algebra (graded by weights) defined over $k$.
\end{itemize}
Note that in~\cite[Thm.~2.2.1]{C14}, the first author of the present paper showed that $\overline{\mathcal{Z}}$ forms a graded $\ok$-algebra (graded by weights) that is defined over $k$. That is:
\begin{itemize}
\item $\infty$-adic MZV's of different weights are linearly independent over $\ok$.
\item $k$-linear independence of $\infty$-adic MZV's implies $\ok$-linear independence.
\end{itemize}
So the statement (ii) above is the $v$-adic analogue of~\cite[Thm.~2.2.1]{C14} for $\infty$-adic MZV's. We mention that in the case of $p$-adic MZV's, one has Furusho-Yamashita's conjecture~\cite[Conj.~5]{Ya10} asserting that nonzero $p$-adic MZV's of different weights are linearly independent over $\QQ$, and this is the $p$-adic analogue of Goncharov's direct sum conjecture~\cite{Gon97} for real-valued MZV's.

Now, we consider the $\overline{k}$-subalgebra $\overline{\mathcal{Z}}^{1}\subset \overline{\mathcal{Z}}$ generated by the $\infty$-adic single zeta values, namely the $\infty$-adic MZV's of depth one, and the $\overline{k}$-subalgebra $\overline{\mathcal{Z}}^{1}_v\subset \overline{\mathcal{Z}}_v$ generated by the $v$-adic single zeta values. When we restrict the $\overline{k}$-algebra homomorphism given in Theorem~\ref{T: IntrodT2} to $\overline{\mathcal{Z}}^{1}$, we obtain the surjective $\ok$-algebra homomorphism
 \[ \overline{\mathcal{Z}}^{1}\twoheadrightarrow \overline{\mathcal{Z}}_{v}^{1} .\] 
 
 It is shown in \cite{CY07} that all the algebraic relations among $\infty$-adic single zeta values are generated by Euler-Carlitz relations, namely $\zeta_{A}((q-1)n)/\zeta_{A}(q-1)^{n}\in k$ for $n\in \NN$, and the $p$-th power relations, ie., $\zeta_{A}(pn)=\zeta_{A}(n)^{p}$ for $n\in \NN$. Recall that by~\cite{Go79} we have the trivial zeros $\zeta_{A}((q-1)n)_{v}=0$ for $n\in \NN$. Chang-Yu's conjecture in \cite[p.~323]{CY07} asserts that all the algebraic relations among Goss' $v$-adic zeta values come from the trivial zeros above and the $p$-th power relations, and hence it implies that the kernel of the $\overline{k}$-algebra homomorphism $\overline{\mathcal{Z}}^{1}\twoheadrightarrow\overline{\mathcal{Z}}_v^{1}$ is generated by $\zeta_{A}(q-1)$ in $\overline{\mathcal{Z}}^{1}$. So Chang-Yu's conjecture matches with the phenomenon of Conjecture~\ref{Conj: kernel}.

\end{document}